\newcommand{\fS}{\mathfrak{S}}
\newcommand{\fc}{\mathfrak{c}}
\newcommand{\fm}{\mathfrak{m}}
\newcommand{\cA}{\mathcal{A}}
\newcommand{\cC}{\mathcal{C}}
\newcommand{\cD}{\mathcal{D}}
\newcommand{\cE}{\mathcal{E}}
\newcommand{\cF}{\mathcal{F}}
\newcommand{\cO}{\mathcal{O}}
\newcommand{\cP}{\mathcal{P}}
\newcommand{\cS}{\mathcal{S}}
\newcommand{\Z}{\mathbb{Z}}
\newcommand{\Q}{\mathbb{Q}}
\newcommand{\C}{\mathbb{C}}
\newcommand{\br}{\mathbf{r}}
\newcommand{\bs}{\mathbf{s}}
\newcommand{\bt}{\mathbf{t}}
\newcommand{\bv}{\mathbf{v}}
\newcommand{\be}{\beta}
\newcommand{\si}{\sigma}
\newcommand{\la}{\lambda}
\newcommand{\ka}{\kappa}
\newcommand{\de}{\delta}
\newcommand{\bla}{\boldsymbol{\la}}
\newcommand{\bnu}{\boldsymbol{\nu}}
\newcommand{\bmu}{\boldsymbol{\mu}}
\newcommand{\bpi}{\boldsymbol{\pi}}
\newcommand{\ta}{{\tilde{a}}}
\newcommand{\tf}{{\tilde{f}}}
\newcommand{\dbs}{\dot{\bs}}
\newcommand{\dbr}{\dot{\br}}
\newcommand{\tdf}{\tilde{\dot{f}}}
\newcommand{\lra}{\longrightarrow}
\newcommand{\Ra}{\Rightarrow}
\newcommand{\Ue}{\mathcal{U}_t (\widehat{\mathfrak{sl}_e})}
\newcommand{\Ul}{\mathcal{U}_{-1/t} (\widehat{\mathfrak{sl}_\ell})}
\newcommand{\sle}{\widehat{\mathfrak{sl}_e}}
\newcommand{\sll}{\widehat{\mathfrak{sl}_\ell}}
\newcommand{\bemptyset}{\boldsymbol{\emptyset}}
\newcommand{\ds}{\displaystyle}
\newcommand{\Irr}{\text{\rm Irr}}
\newcommand{\id}{\text{\rm Id}}
\newcommand{\ug}{\operatorname{Ug}}
\newcommand{\kl}{{\operatorname{Kl}}}
\numberwithin{equation}{section}
\theoremstyle{plain}
\newtheorem{theorem}[equation]{Theorem}
\newtheorem{lemma}[equation]{Lemma}
\newtheorem{proposition}[equation]{Proposition}
\newtheorem{corollary}[equation]{Corollary}
\newtheorem{definition}[equation]{Definition}
\theoremstyle{remark}
\newtheorem{example}[equation]{Example}
\newtheorem{remark}[equation]{Remark}
\newcommand{\emp}{\emptyset}
\newcommand{\bemp}{\bemptyset}
\newcommand{\slell}{\widehat{\mathfrak{sl}_\ell}}
\newcommand{\slinf}{\mathfrak{sl}_\infty}
\newcommand{\rev}{\mathrm{rev}}
\newcommand{\WC}{\mathsf{WC}_{-\leftarrow +}}
\newcommand{\wc}{\mathsf{wc}_{-\leftarrow +}}
\title{Generalized Mullineux involution and perverse equivalences}
\author{Thomas Gerber}
\address[T.G.]{\'Ecole Polytechnique F\'ed\'erale de Lausanne, 1015 Lausanne, Switzerland.}
\email{thomas.gerber@epfl.ch}
\author{Nicolas Jacon}
\address[N.J.]{Universit\'e de Reims Champagne Ardennes, Laboratoire de Math\'ematiques CNRS UMR 9008,
Moulin de la Housse BP 1039, 51100 Reims, France}
\email{nicolas.jacon@univ-reims.fr}
\author{Emily Norton}
\address[E.N.]{University of Bonn, Endenicher Allee 60, 53115 Bonn, Germany}
\email{enorton@mpim-bonn.mpg.de}
\begin{document}

\begin{abstract}
We define a generalization of the Mullineux involution on multipartitions
using the theory of crystals for higher level Fock spaces.
Our generalized Mullineux involution turns up in representation theory via two important derived functors on cyclotomic Cherednik category $\cO$:
Losev's ``$\ka=0$'' wall-crossing, and Ringel duality.
\end{abstract}

\maketitle

\sloppy

\section*{Introduction}

It has been known since the foundational work of Frobenius that partitions of $n$ naturally label the complex irreducible representations of the symmetric group $\mathfrak{S}_n$. If we take an irreducible representation labeled by a partition $\lambda$ and tensor it with the sign representation, we obtain an irreducible representation labeled by the transpose of $\lambda$. 
The story in positive characteristic is more subtle: the irreducible representations of $\mathfrak{S}_n$ over a field of characteristic $p>0$ are labeled by the $p$-regular partitions (partitions in which each non-zero part occurs at most $p-1$ times).
Tensoring such a representation with the sign representation still yields an irreducible representation, but the resulting involution on $p$-regular partitions lacks such a simple description as taking the transpose. In 1979, Mullineux defined a combinatorial algorithm producing an involution on $p$-regular partitions (now called the Mullineux involution), and he conjectured that this involution describes the result of tensoring an irreducible representation with the sign representation in characteristic $p$ \cite{Mullineux1979}.
 
In 1995, Kleshchev came up with a surprising algorithm to compute the Mullineux involution \cite{Kleshchev1995III}. 
In fact, whereas Mullineux's algorithm involved repeated operations with strips of boxes in the rim of the Young diagram, 
it has been understood later that Kleshchev's algorithm
can be interpreted in terms of the Kashiwara crystal of an irreducible highest weight module of level $1$ for the quantum group of affine type $A_{p-1}$ \cite{LLT1996}:
the Mullineux involution is the automorphism of oriented $\Z/p\Z$-colored graphs which switches the sign of each arrow.
This algorithm
led to Ford and Kleshchev's proof of the Mullineux Conjecture \cite{FK}; a different proof was given later by Bessenrodt and Olsson \cite{BessenrodtOlsson1998}.

The Mullineux involution can be generalized to various extents. 
First, one can look at the Hecke algebra of $\mathfrak{S}_n$ (which can be seen as a deformation of the group algebra) with parameter specialized to a primitive $e$-th root of $1$, $e\in\Z_{\geq 2}$. 
An involution on the set of $e$-regular partitions  (which parametrize the associated irreducible representations) can then be defined using crystals as above, see \cite[Section 7]{LLT1996}. 
Next, Fayers defined a Mullineux involution for the Hecke algebra of the complex reflection group $G(\ell,1,n)$ (the Ariki-Koike algebra) \cite{fayers}. 
Fayers' involution can also be computed using crystal graphs (now for irreducible highest weight modules of level $\ell$) or via a combinatorial algorithm generalizing Mullineux's original procedure  \cite{JaconLecouvey2008}. The Ariki-Koike algebra has cell modules labeled by all $\ell$-partitions, but simples labeled only by Uglov $\ell$-partitions
(which coincide with $e$-regular partitions for $\ell=1$).
However, its module category is a quotient of a highest weight category $\cO_{\kappa,\bs}$ where every $\ell$-partition labels a simple module, raising the question whether the Mullineux involution admits a further meaningful extension to that bigger category.

Namely, consider the category $\cO_{\kappa,\bs}(n)$ of the Cherednik algebra of $G(\ell,1,n)$. This category depends on parameters 
$\kappa\in\Q^\times$ and $\bs\in\Q^\ell$ \cite{GGOR2003}, \cite{Rouquier2008}, \cite{Losev2015a} and its Grothendieck group has a basis consisting of $\ell$-partitions of $n$.
In order to relate categories depending on different parameters, Losev introduced derived equivalences called wall-crossing functors \cite{Losev2015}. Each wall-crossing can be thought of as a partial version of a duality functor called Ringel duality. 
The wall-crossing functors and  Ringel duality are examples of a special kind of derived equivalence called a perverse equivalence \cite{ChuangRouquier2008},\cite{Losev2017}, and consequently they effect a permutation of the set of simple objects, that is, a permutation of $\ell$-partitions.
It is natural to ask for an explicit formula for these combinatorial maps.

We now summarize the main results of this paper. In Theorem \ref{genmul} we define a generalization of the Mullineux involution on all multipartitions. The proof uses the result of \cite{Gerber2016} that the $\sle$-, $\slinf$-, and $\slell$-crystals on the level $\ell$ Fock space all commute.
Our involution $\Phi$ is compatible 
with both Fayers' and Losev's involutions, recovering Fayers' in the case of Uglov multipartitions. The next question is the representation-theoretic meaning of $\Phi$. In Section \ref{chered} we study the combinatorics of perverse equivalences on module categories of Cherednik algebras. Theorems \ref{wcmul1} and \ref{wcmul2} give some formulas 
for the $\kappa=0$ wall-crossing in terms of $\ell$ copies of the level $1$ Mullineux involution; we recover
\cite[Corollary 5.7]{Losev2015} when $\ell=1$. 
Next, we look for a duality functor which produces the involution $\Phi$, and we find in Theorem \ref{ringelmul} that $\Phi$ arises from  Ringel duality. Here the perspective of diagrammatic Cherednik algebras \cite{Webster2017} is crucial, especially \cite[Corollary 5.11]{Webster2017}.
In Section \ref{perspectives}, we define a refinement of $\Phi$ with a speculative eye towards the Alvis-Curtis duality, a perverse equivalence for finite groups of Lie type which still lacks a combinatorial description outside type $A$. This generalizes Dudas and the second author's definition of a generalized Mullineux involution in the case $\ell=1$ \cite{DudasJacon2018} by refining the $\slinf$-crystal with respect to an integer parameter $d$.

\medskip

{\bf Acknowledgements.} The authors thank Olivier Dudas, Ivan Losev, Tomasz Przezdziecki, Catharina Stroppel, and  Ben Webster for useful discussions, and the anonymous referee for helpful suggestions to improve the readability of the paper.
The first author is supported by the Ambizione project of the Swiss National Science Foundation.
The  second author is supported by Agence Nationale de la Recherche  GeRepMod ANR-16-CE40-0010-01.

\section{The Mullineux involution for cyclotomic Hecke algebras}\label{mulAK}

We here give a quick review of the definition of the Mullineux involution
for cyclotomic Hecke algebras and its crystal interpretation \cite{fayers}, \cite{JaconLecouvey2008}. 
This generalizes the usual notion of Mullineux involution.

\subsection{Definition}

Let $\ell\in\Z_{\geq1}$ and $n\in\Z_{\geq1}$.
Denote $W_{\ell,n}$  the complex reflection group $G(\ell,1,n)=\fS_n\ltimes(\Z/\ell\Z)^n$.
Let $R$ be a field of arbitrary characteristic and let   $v\in R^{\times}$ 
and let $(s_{1},s_{2},\ldots,s_{\ell})$ be an $\ell$-tuple of integers. 

The cyclotomic Hecke algebra (also called Ariki-Koike algebra) $\mathcal{H}_{R,n}^\bs=\mathcal{H}(v;s_1,\ldots,s_\ell)$ over $R$ 
is the unital associative $R$-algebra with a presentation by

\begin{itemize}
\item  generators: $T_0$, $T_1$,\ldots, $T_{n-1}$,

\item  relations: 
\begin{align*}
& T_0 T_1 T_0 T_1=T_1 T_0 T_1 T_0, \\
& T_iT_{i+1}T_i=T_{i+1}T_i T_{i+1}\ (i=1,\ldots,n-2), \\
& T_i T_j =T_j T_i\ (|j-i|>1), \\
&(T_0-v^{s_1})(T_0-v^{s_2})\ldots(T_0- v^{s_\ell}) = 0, \\
&(T_i-v)(T_i+1) = 0\ (i=1,\ldots,n-1).
\end{align*}
\end{itemize}

It can be seen as a deformation of the group algebra of $W_{\ell,n}$.
In particular, if  $\ell=1$, it is the usual  Hecke algebra of type $A$ and if moreover $v=1$, we obtain the group algebra $R\mathfrak{S}_n$ of the symmetric group. We denote by:
\begin{itemize}
\item  $\Pi^\ell$ the set of all $\ell$-partitions, that is, the set of all $\ell$-tuples $(\lambda^{1},\ldots,\lambda^\ell)$ of partitions.
\item $\Pi=\Pi^1$ the set of all partitions.
\end{itemize}
The unique $\ell$-partition of size $0$ is denoted by $\bemptyset$. 
For any subset $\cE$ of $\Pi^\ell$ and any $n\in\Z_{\geq0}$,
we denote  by $\cE(n)$ the set of $\ell$-partitions in $\cE$ of total size $|\lambda^1|+\ldots+|\lambda^\ell|=n$. 

Let $e$ be the multiplicative order of  $v$ in $R$.  We assume that  $v\neq 1$ so that we have $e\in \{2,3,\ldots \}\sqcup \{ \infty\}$.  
We now recall several facts about the representation theory of cyclotomic Hecke algebras. We refer to \cite[Chapter 5]{GeckJacon2011} for details. For each 
$\bla\in \Pi^{\ell} (n)$, there is  an $\mathcal{H}_{R,n}^\bs$-module $S^{\bla}$ which is the Specht module associated to $\bla$. 
There exists a natural bilinear form, $\mathcal{H}_{R,n}^\bs$-invariant, on each of these modules and an associated
radical such that the quotients $D^{\bla}:=S^{\bla}/\text{rad}(S^{\bla})$ are either $0$ or
irreducible. The non-zero $D^{\bla}$ then give a 
complete set of non-isomorphic simple $\mathcal{H}_{R,n}^\bs$-modules.

The set $\left\{\bla\in\Pi^\ell(n) \mid D^{{\boldsymbol{\lambda}}}\neq 0\right\}$ depends only on $e$ and $\bs$
and is known as the set of Kleshchev $\ell$-partitions, denoted $\kl_{e,\bs} (n)$.
It was originally defined using the notion of crystal (see \cite[Section 6.2.10]{GeckJacon2011}), but there is another independent description, see \cite{Jkle}.  

\begin{remark} Let $e\geq 2, \bs=(s_1,\ldots,s_\ell)$ and $\bt=(t_1,\ldots,t_\ell)$ such that $t_i=s_i\mod e$ for all $i=1,\ldots, \ell$. 
Observe that for any $n\in\Z_{\geq0}, \mathcal{H}_{R,n}^\bs = \mathcal{H}_{R,n}^{\bf t}$ and the definition
 of Kleshchev $\ell$-partitions gives that   $\kl_{e,\bs}(n)=\kl_{e,\bt}(n)$. Now if there exists $\sigma\in \mathfrak{S}_\ell$ 
  such that  $t_i=s_{\sigma(i)}\mod e$ for all $i=1,\ldots, \ell$ then we still have that for any $n\in\Z_{\geq0}, \mathcal{H}_{R,n}^\bs = \mathcal{H}_{R,n}^{\bf t}$
   but  $\kl_{e,\bs}(n)$ is different from $\kl_{e,\bt}(n)$ in general.
\end{remark}

Set $\widetilde{\mathcal{H}}_{R,n}^\bs:=\mathcal{H}_{R,n}(v^{-1};s_{\ell},\ldots,s_{1})$ and denote by   $\widetilde{T
}_{0}$,\ldots,$\widetilde{T}_{\ell-1}$ the associated standard generators.
 For each $\bla \in \Pi^\ell (n)$, denote by   $\widetilde{S}^{{\boldsymbol{\lambda}}}$   the associated Specht module 
 of $\widetilde{\mathcal{H}}_{R,n}^\bs$. By \cite{fayers}, the simple modules of $\widetilde{\mathcal{H}}_{R,n}^\bs$ are labeled by the set  $\kl_{e,-\bs_{\textrm{rev} }} (n)$
  where $-\bs_{\textrm{rev}}=(-s_\ell,\ldots,-s_1)\in (\mathbb{Z}/e\mathbb{Z})^\ell$. Thus, 
  for each 
 $ \bla\in \kl_{e,-\bs_{\textrm{rev} }} (n)$,  we have an associated simple        $\widetilde{\mathcal{H}}_{R,n}^\bs$-module  $\widetilde{D}^{{\boldsymbol{\lambda}}
}$. We have an involutive isomorphism $\theta :
\mathcal{H}_{R,n}^\bs\rightarrow \widetilde{\mathcal{H}}_{R,n}^\bs$ given by 
\begin{equation*}
T_{0}\mapsto \widetilde{T}_{0}\qquad T_{i}\mapsto -v\widetilde{T}_{i}\
(i=1,\ldots,n-1).
\end{equation*}
Then, $\theta $ induces a functor $F$ from the category of $\widetilde{
\mathcal{H}}_{R,n}^\bs$-modules to the category of ${\mathcal{H}}_{R,n}^\bs$-modules. As a consequence, we obtain a bijective map 

\begin{equation*}
\mathfrak{m}_{e,{\bs}}:\kl_{e,\bs} (n) \rightarrow \kl_{e,-{\bs_{\textrm{rev}}}} (n),
\end{equation*}
satisfying 
\begin{equation*}
F(\widetilde{D}^{\mathfrak{m}_{e,{\bs}} ({\boldsymbol{\lambda}})})\simeq {D}^{
\boldsymbol{\lambda}},
\end{equation*}
for all $\lambda \in \kl _{e,\bs}$. 

\begin{remark}\label{rem_mulFayers}
\begin{enumerate}
\item By definition of $\theta$, we have
$\fm_{e,-\bs_\rev}\circ\fm_{e,\bs}=\id_{\kl_{e,\bs}}$
and
$\fm_{e,\bs}\circ\fm_{e,-\bs_\rev}=\id_{\kl_{e,-\bs_\rev}}$.
\item Assume that $\ell=1$. 
Then the map $\mathfrak{m}_{e,\bs}$ is an involution and it does not depend on the choice of $\bs$.
In fact, we have $\fm_{e,\bs}=m_e$, where $m_e$ is the usual Mullineux involution defined in the introduction. 
\end{enumerate}
\end{remark}

\subsection{The quantum algebra $\Ue$}

We denote by $\Lambda_0, \ldots, \Lambda_{e-1}$ (where the subscripts are understood modulo $e$) the fundamental weights attached to the 
Kac-Moody algebra $\sle$. The simple roots are denoted by $\alpha_0,\ldots,\alpha_{e-1}$ 
and $\delta:=\alpha_0+\ldots +\alpha_{e-1}$ is the null root. The fundamental weights and the simple roots are related by the following formula:
$$\alpha_i=2\Lambda_i-\Lambda_{i-1}-\Lambda_{i+1}+\delta_{i,0}\delta \quad \text{ for all } 0\leq i\leq e-1$$
(where $\delta_{ij}$ denotes the Kronecker symbol).    
We denote by $\mathcal{P}=\bigoplus_{0\leq i\leq e-1}  \Z\Lambda_i \oplus \mathbb{Z} \delta$ the weight lattice and by  $\Ue$ the quantum algebra associated to $\sle$, where $t$ is  an indeterminate.  This is an algebra over  $\C (t)$  with  generators $e_i$, $f_i$, $t_i^{\pm 1}$ ($0\leq i\leq e-1$) and $\partial^{\pm1}$ subject to standard relations which we do not recall.  
We refer to \cite[Chapter 6]{GeckJacon2011} for details on this algebra and its the representation theory.

\subsection{The level $\ell$ Fock space} \label{fock}

Let us fix some notation.  Fix $e,\ell\geq 2$ and $s\in\Z$. For $\mathbb{K}=\Z$ or $\Q$, we 
denote 
$$\mathbb{K}^\ell(s)=\left\{ (s_1,\ldots,s_\ell)\in\mathbb{K}^\ell \, \left| \,  \sum_{i=1}^\ell s_i=s\right. \right\}.$$
For $\bs\in\Z^\ell(s)$, we denote by $\Pi^\ell_\bs$ 
the set of all symbols of the form $|\bla,\bs\rangle$ with $\bla \in \Pi^\ell$. 
Further, denote by $\Pi^\ell_s$ the sets of all elements in $\Pi^\ell_\bs$ where $\bs\in \Z^\ell(s)$.
Let 
$\cF_{e,\bs}$ be the 
$\C (t)$-vector space
with standard basis $\Pi^\ell_\bs$,
i.e. $\cF_{e,\bs}=\bigoplus_{\bla\in\Pi^\ell}\C (t) |\bla,\bs\rangle$, called the Fock space of level $\ell$ and rank $e$
(associated to the charge $\bs$). This space can be endowed with a structure of  an integrable $\Ue$-module, see  \cite[Section 6.2]{GeckJacon2011}.

One can decompose this module as a direct sum of remarkable vector spaces. Indeed, if $w:=\sum_{0\leq i\leq e-1} a_i \Lambda_i+d\delta \in \mathcal{P}$,  
define:
$$\cF_{e,\bs}[w]:= \{m\in \cF_{e,\bs}\ |\ \partial m=t^dm,\ t_i m=t^{a_i} m\ \forall i\in [0,e-1]\}.$$
If this space is non zero, we say that $w$ is a weight for $\cF_{e,\bs}$ and $\cF_{e,\bs}[w]$ is called the $w$-weight space.
The elements of $\cF_{e,\bs}[w]$ are called weight vectors.  
Importantly, each 
element of the standard basis  $|\bla,{\bf s} \rangle$ is a weight vector and the associated weight may be easily computed (see for example \cite[Corollary 2.5]{Yvonne2007}).
In particular, one can always write it as follows:
$$d \delta +\sum_{0\leq i\leq e-1} \Lambda_{s_i} -\sum_{0\leq i\leq e-1} m_i \alpha_i$$
and the number $\sum_{0\leq i\leq e-1} m_i$ corresponds to the size of $\bla$. 
In particular, the weight of $|\bemp,\bs\rangle$ is $\sum_{0\leq i\leq e-1} \Lambda_{s_i}$.
Thus  $\cF_{e,\bs}$  is  the direct sum of its weight spaces.

\subsection{The $\sle$-crystal of the Fock space} \label{fock_crys}

As mentioned in the introduction, an important part of the representation theory of cyclotomic Hecke algebras 
is controlled by the theory of crystals for Fock spaces. 
The $\sle$-crystal  of the Fock space $\cF_{e,\bs}$ is a combinatorial construction
arising from the action of  $\Ue$ on the   Fock space (see the general definition in \cite{Kashiwara1991}, \cite{HongKang2002}). 
Concretely, the $\sle$-crystal is a graph with 
\begin{itemize}
\item {vertices}: the elements of $\Pi^\ell_\bs$,
\item {arrows}:  $|\bla,\bs\rangle \overset{i}{\rightarrow }  |\bmu,\bs\rangle$ for $\bla,\bmu\in\Pi^\ell$, $i\in\{0,\ldots,e-1\}$
if and only if $|\bmu,\bs\rangle = \widetilde{f}_i  |\bla,\bs\rangle$, where $\widetilde{f}_i$ is the $i$-th lowering Kashiwara operator of $\Ue$.
\end{itemize}
An explicit recursive formula for computing the $\sle$-crystal is given in \cite{JMMO1991} in terms of adding good boxes, see also \cite{FLOTW1999}.
It has 
infinitely many connected components, each of which is parametrized by its unique source vertex,
called a highest weight vertex.
We denote by $\ug_{e,\bs}$ the $\ell$-partitions appearing in the connected component parametrized by the highest weight vertex $\bemp=(\emp,\ldots,\emp)$,
and call them the Uglov $\ell$-partitions.
When $\ell=1$ this set is nothing but the set of $e$-regular partitions. 
The following is an easy consequence of the definition of Kleshchev and Uglov $\ell$-partitions (see \cite[Ex. 6.2.16]{GeckJacon2011}).

\begin{proposition} \label{ariki}
Fix $n\in\Z_{\geq 0}$. Let $\bs=(s_1,\ldots,s_\ell)\in \mathbb{Z}^\ell$ and
${\bf t}=(t_1,\ldots,t_\ell)\in \mathbb{Z}^\ell$ be such that $s_i=t_i\mod e$ for all $i=1,\ldots,\ell$,
and  $t_i-t_{i-1}>n-1$ for all $i=2,\ldots,\ell$.
Then 
$$\ug_{e,\bt}(n)=\kl_{e,\bt}(n)=\kl_{e,\bs}(n).$$
 \end{proposition}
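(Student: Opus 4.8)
The plan is to establish the two equalities $\kl_{e,\bt}(n)=\kl_{e,\bs}(n)$ and $\ug_{e,\bt}(n)=\kl_{e,\bt}(n)$ separately. The first is immediate from the preceding Remark: since $t_i\equiv s_i\pmod e$ for all $i$, and the set of Kleshchev $\ell$-partitions of a given size depends only on $e$ and on the residues $\bs\bmod e$, we get $\kl_{e,\bt}(n)=\kl_{e,\bs}(n)$ at once. Thus all the content lies in the second equality, and this is where the gap hypothesis $t_i-t_{i-1}>n-1$ is used.

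For the second equality, I would note that both $\ug_{e,\bt}(n)$ and $\kl_{e,\bt}(n)$ are extracted from the $\sle$-crystal of the Fock space by the same good-box recursion of \cite{JMMO1991}, \cite{FLOTW1999}: one builds up the connected component of $\bemp$ by repeatedly adding good boxes, a good box being singled out from the addable and removable $i$-nodes by a cancellation (``sign rule'') procedure. The only ingredient of this recursion that can differ between the Uglov labelling (with the integer charge $\bt$) and the labelling defining the Kleshchev $\ell$-partitions (\cite[\S6.2.10]{GeckJacon2011}, which depends only on the residues $\bar t_j$) is the linear order imposed on the $i$-nodes before cancellation. In the Uglov crystal this order is governed by the charged contents $b-a+t_c$ of the nodes $(a,b,c)$ (row $a$, column $b$, component $c$), ties broken by the component index, whereas the Kleshchev order runs through the components $1,\dots,\ell$ in turn and orders the $i$-nodes within each component by their content $b-a$.

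The key step is therefore to show that these two orders agree on every $\ell$-partition of total size at most $n$. For such an $\ell$-partition with components of sizes $m_1,\dots,m_\ell$ (so $\sum_c m_c\leq n$), the largest content of an addable or removable node in component $c$ is at most $m_c$ and the smallest is at least $-m_c$; combined with $\sum_c m_c \le n$ and the hypothesis $t_{c+1}-t_c>n-1$, this forces $t_c+(\text{largest content in }c)\ \leq\ t_{c+1}+(\text{smallest content in }c{+}1)$ for every $c$. Hence in the charged-content order every node of component $c$ precedes every node of component $c+1$ (equalities of charged content being resolved by the component index exactly as in the Kleshchev convention), so the Uglov order collapses to the component-by-component order on all $\ell$-partitions of size $\leq n$. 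Since the good-box recursion applied to an $\ell$-partition of size $\leq n$ never leaves the range of sizes $\leq n$, the two crystals coincide degree by degree up to $n$; in particular the connected components of $\bemp$ agree in each degree $\leq n$, which gives $\ug_{e,\bt}(n)=\kl_{e,\bt}(n)$.

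The main obstacle I anticipate is the precise bookkeeping of conventions rather than any genuine difficulty: one must pin down exactly the order used in \cite[\S6.2.10]{GeckJacon2011} to define the Kleshchev $\ell$-partitions and check that the boundary cases of the content estimate (where a charged content in component $c$ can equal one in component $c+1$) are broken identically by both orders. Once the two orders are identified on $\ell$-partitions of size $\leq n$, the coincidence of the crystals, and hence of the two sets, is formal; this is the substance of \cite[Ex.~6.2.16]{GeckJacon2011}.
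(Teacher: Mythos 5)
Your proof is correct, and it follows the route the paper itself intends: the paper handles the equality $\kl_{e,\bt}(n)=\kl_{e,\bs}(n)$ by the preceding Remark and simply cites \cite[Ex.~6.2.16]{GeckJacon2011} for $\ug_{e,\bt}(n)=\kl_{e,\bt}(n)$, and your order-collapse argument is precisely the content of that exercise. One observation that disposes of the obstacle you flag at the end: the tie-breaking issue never actually arises where it matters. A tie $t_c+(\text{max content in }c)=t_{c+1}+(\text{min content in }c{+}1)$ forces $t_{c+1}-t_c\leq m_c+m_{c+1}\leq n\leq t_{c+1}-t_c$, hence $m_c+m_{c+1}=n$ and the $\ell$-partition has total size exactly $n$; but whether an $\ell$-partition of size at most $n$ lies in the connected component of $\bemp$ is determined only by the good-box recursion applied to vertices of size at most $n-1$, where your estimate gives \emph{strict} separation, since then $m_c+m_{c+1}\leq n-1<t_{c+1}-t_c$. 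So the Uglov and Kleshchev orders agree strictly on all vertices relevant to the statement, and no comparison of tie-breaking conventions is needed.
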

In other words, Kleshchev $\ell$-partitions are a particular case of Uglov $\ell$-partitions,
i.e. we can index irreducible modules of cyclotomic Hecke algebras by certain
vertices of Fock space crystals.
The following result is due to Fayers \cite[Section 2]{fayers} in the case of Kleshchev multipartitions
(that is, under the condition of \Cref{ariki}) and to \cite[Section 4]{JaconLecouvey2008} in general.

\begin{theorem}\label{FJLmul} Let $n\in\Z_{\geq0}, \bs\in\Z^\ell(s)$ and $e\geq 2$.
 There exists a unique bijection
$$\begin{array}{rccc}
{\tt\Phi}_{e,\bs} :& \ug_{e,\bs}(n) &\lra& \ug_{e,-\bs_\rev}(n)\\
& \bla & \longmapsto & {\tt\Phi}_{e,\bs}(\bla)
\end{array}
$$
such that 
\begin{itemize}
\item ${\tt\Phi}_{e,\bs}(\bemp)=\bemp$,
 \item for all $0\leq i\leq e-1$, we have ${{\tt\Phi}_{e,\bs} }\circ \widetilde{f}_i =\widetilde{f}_{-i}  \circ {{\tt\Phi}_{e,\bs} }$.
\end{itemize}
\end{theorem}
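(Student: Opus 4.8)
The plan is to realize $\ug_{e,\bs}(n)$ and $\ug_{e,-\bs_\rev}(n)$ as the size-$n$ layers of two connected highest weight $\sle$-crystals and to produce ${\tt\Phi}_{e,\bs}$ from the diagram automorphism $\sigma\colon i\mapsto -i$ of $\sle$. By definition $\ug_{e,\bs}$ is the connected component of the $\sle$-crystal of $\cF_{e,\bs}$ containing $|\bemp,\bs\rangle$; since $\cF_{e,\bs}$ is an integrable $\Ue$-module and $|\bemp,\bs\rangle$ is the source of its component, this component is isomorphic as a crystal to $B(\Lambda_\bs)$, the crystal of the irreducible integrable module of highest weight $\Lambda_\bs:=\sum_{i=1}^{\ell}\Lambda_{s_i}$ (indices mod $e$), which is the weight of $|\bemp,\bs\rangle$ computed above. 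Likewise $\ug_{e,-\bs_\rev}\cong B(\Lambda_{-\bs_\rev})$ with $\Lambda_{-\bs_\rev}=\sum_{i=1}^{\ell}\Lambda_{-s_i}$.

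First I would check that $\sigma\colon i\mapsto -i \pmod e$ is a Dynkin diagram automorphism of the affine type $A_{e-1}^{(1)}$ diagram of $\sle$: it fixes the node $0$ and sends the edge $\{i,i+1\}$ to $\{-i-1,-i\}$, hence permutes the edges. On fundamental weights $\sigma$ acts by $\Lambda_i\mapsto\Lambda_{-i}$, so $\sigma(\Lambda_\bs)=\sum_{i=1}^{\ell}\Lambda_{-s_i}=\Lambda_{-\bs_\rev}$. By the functoriality of crystals under diagram automorphisms, $\sigma$ lifts to an automorphism of $\Ue$ carrying the module of highest weight $\Lambda_\bs$ to that of highest weight $\Lambda_{-\bs_\rev}$, inducing an isomorphism of crystals $B(\Lambda_\bs)\to B(\Lambda_{-\bs_\rev})$ that sends the highest weight vertex to the highest weight vertex and intertwines the Kashiwara operators according to $\sigma$. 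Transporting this isomorphism back to the two Fock space components produces a map ${\tt\Phi}_{e,\bs}$ with ${\tt\Phi}_{e,\bs}(\bemp)=\bemp$ and ${\tt\Phi}_{e,\bs}\circ\widetilde{f}_i=\widetilde{f}_{-i}\circ{\tt\Phi}_{e,\bs}$ for all $i$.

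It remains to see that ${\tt\Phi}_{e,\bs}$ restricts to a bijection on the size-$n$ layers and is unique. For the size, I would use the combinatorial description of the $\sle$-crystal via addition of good boxes \cite{JMMO1991},\cite{FLOTW1999}: each $\widetilde{f}_i$ increases the total size by exactly one, and likewise each $\widetilde{f}_{-i}$. Since ${\tt\Phi}_{e,\bs}(\bemp)=\bemp$ and ${\tt\Phi}_{e,\bs}\circ\widetilde{f}_i=\widetilde{f}_{-i}\circ{\tt\Phi}_{e,\bs}$, a vertex reached from $\bemp$ by $n$ applications of lowering operators is sent to a vertex reached from $\bemp$ by $n$ such applications, whence $|{\tt\Phi}_{e,\bs}(\bla)|=|\bla|$ and ${\tt\Phi}_{e,\bs}(\ug_{e,\bs}(n))=\ug_{e,-\bs_\rev}(n)$. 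Uniqueness follows from connectedness: writing an arbitrary vertex as $\bla=\widetilde{f}_{i_r}\cdots\widetilde{f}_{i_1}\bemp$, the two conditions force ${\tt\Phi}_{e,\bs}(\bla)=\widetilde{f}_{-i_r}\cdots\widetilde{f}_{-i_1}\bemp$, so any two maps satisfying them coincide.

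The main obstacle is making rigorous the passage from the diagram automorphism $\sigma$ to the crystal isomorphism $B(\Lambda_\bs)\to B(\Lambda_{-\bs_\rev})$. Conceptually this rests on Kashiwara's uniqueness theorem for crystal bases of integrable highest weight modules together with the invariance of the defining relations of $\Ue$ under the algebra automorphism induced by $\sigma$; in the affine setting one must check that these standard arguments apply verbatim. Alternatively, one can avoid the abstract lift and argue entirely combinatorially, verifying from the good-box recursion that relabeling residues by $i\mapsto -i$ together with passing to the charge $-\bs_\rev$ intertwines the two crystal structures; the delicate point in that approach is the bookkeeping of residues in the level $\ell$ Fock space, where the ordering of the charge enters.
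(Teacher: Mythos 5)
Your proposal is correct, but there is nothing in the paper to compare it against: the paper does not prove this theorem, it quotes it as a known result, attributing it to Fayers (\cite[Section 2]{fayers}, for Kleshchev multipartitions) and to Jacon--Lecouvey (\cite[Section 4]{JaconLecouvey2008}, in general). Your route---realizing $\ug_{e,\bs}$ as the JMMO model of $B(\Lambda_\bs)$ inside the Fock space crystal, twisting by the diagram automorphism $\sigma\colon i\mapsto -i$ of the affine $A_{e-1}^{(1)}$ diagram to get a crystal isomorphism $B(\Lambda_\bs)\to B(\Lambda_{-\bs_\rev})$ relabelling $\widetilde{f}_i$ as $\widetilde{f}_{-i}$, and then forcing uniqueness by connectedness---is essentially the standard abstract existence argument underlying those references. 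What the cited papers provide beyond this is an \emph{effective} description of the map (Fayers via the representation theory of Ariki--Koike algebras and canonical bases, Jacon--Lecouvey via explicit crystal isomorphisms), which the statement itself does not require; so as a proof of the theorem as stated, your argument is complete in outline and arguably cleaner.

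Two points should be tightened. First, your inference ``$|\bemp,\bs\rangle$ is the source of its component, hence the component is isomorphic to $B(\Lambda_\bs)$'' is not valid for an arbitrary integrable module with crystal basis: a source of the crystal need only be killed by the $\widetilde{e}_i$, not by the $e_i$. Here you should use that $|\bemp,\bs\rangle$ is an honest highest weight vector of weight $\Lambda_\bs=\sum_i\Lambda_{s_i}$ and that the $q$-deformed level $\ell$ Fock space is completely reducible, so the component of that vector in the crystal is $B(\Lambda_\bs)$. Second, in the affine setting the diagram automorphism satisfies $\sigma(\Lambda_i)=\Lambda_{-i}$ only up to a multiple of the null root $\delta$; this is harmless because $B(\lambda)$ is insensitive to $\delta$-shifts of $\lambda$, but it deserves an explicit sentence, since otherwise the identity $\sigma(\Lambda_\bs)=\Lambda_{-\bs_\rev}$ is not literally true in the weight lattice.
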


This means that for all paths
\begin{equation*}
|\boldsymbol{\emptyset},\bs\rangle\overset{i_{1}}{\rightarrow }\cdot \overset{i_{2}}{
\rightarrow }\cdot \overset{i_{3}}{\rightarrow }\cdots \overset{i_{n}}{
\rightarrow }|{\boldsymbol{\lambda}},\bs\rangle
\end{equation*}
in the  $\sle$-crystal on the Fock space $\cF_{e,\bs}$, there exists a
corresponding path 
\begin{equation*}
|\boldsymbol{\emptyset},-\bs_\rev\rangle\overset{-i_{1}}{\rightarrow }\cdot \overset{-i_{2}}{
\rightarrow }\cdot \overset{-i_{3}}{\rightarrow }\cdots \overset{-i_{n}}{
\rightarrow }|{\boldsymbol{\mu}},-\bs_\rev\rangle
\end{equation*}
in the $\sle$-crystal on the Fock space $\cF_{e,-\bs_{\textrm{rev}}}$  from the empty 
$\ell$-partition to an $\ell$-partition ${\boldsymbol{\mu}}\in \ug_{e,-\bf s_{\textrm{rev}}}$.
Then ${\tt\Phi}_{e,\bs}(\bla)=\bmu$.
In \cite{JaconLecouvey2008}, it is explained how
the map ${\tt\Phi}_{e,\bs}$ can be explicitly computed without constructing the $\sle$-crystal.

\begin{example}
Take $s=4$, $e=4$, $\ell=3$, $\bs=(5,-1,0)$ 
(so that $-\bs_\rev=(0,1,-5)$) and $\bla=(1,3.2,\emp)$\footnote{In the examples, we use the multiplicative notation for partitions
and we forget the brackets around components of a multipartition.}.
One can write for instance $\bla = \tf_1\tf_1\tf_3\tf_0\tf_2\tf_3 \, \bemp$,
so that $\bla\in\ug_{e,\bs}(6)$.
Therefore, in the crystal of the Fock space $\cF_{e,-\bs_\rev}$, we get
\begin{align*}
{\tt\Phi}_{e,\bs} &= \tf_{-1}\tf_{-1}\tf_{-3}\tf_0\tf_{-2}\tf_{-3} \, \bemp \\ 
& =\tf_3\tf_3\tf_1\tf_0\tf_2\tf_1 \, \bemp  \\
& = (2.1,3,\emp).
\end{align*}
\end{example}

The following result by Fayers \cite{fayers} gives the desired crystal interpretation of the Mullineux involution for cyclotomic Hecke algebras.

\begin{theorem}[Fayers] Fix $n\in\Z_{\geq 0}, \bs\in\Z^\ell$ and $e\geq 2$.
For all $\bla\in\kl_{e,\bs}(n)$, we have
\begin{equation*}
\mathfrak{m}_{e,{\bs}} ({\boldsymbol{\lambda}})={\tt\Phi}_{e,\bs}  (\bla).
\end{equation*}

\end{theorem}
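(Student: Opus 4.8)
The plan is to show that the bijection $\fm_{e,\bs}$ satisfies the two properties that characterize $\Phi_{e,\bs}$ in \Cref{FJLmul}, namely $\fm_{e,\bs}(\bemp)=\bemp$ and $\fm_{e,\bs}\circ\widetilde{f}_i=\widetilde{f}_{-i}\circ\fm_{e,\bs}$ for all $i$, and then to invoke the uniqueness asserted there. As a preliminary reduction, I would use \Cref{ariki} to replace $\bs$ by a charge $\bt$ with $t_i\equiv s_i \bmod e$ and $t_i-t_{i-1}>n$, for which $\kl_{e,\bt}(n)=\ug_{e,\bt}(n)$ (and similarly one step higher), so that $\Phi_{e,\bt}$ is literally defined on all of $\kl_{e,\bs}(n)$; since $\mathcal{H}_{R,n}^\bt=\mathcal{H}_{R,n}^\bs$ forces $\fm_{e,\bt}=\fm_{e,\bs}$, it suffices to identify $\fm_{e,\bt}$ with $\Phi_{e,\bt}$ in this asymptotic regime.

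The engine of the argument is Ariki's categorification theorem (the modular branching rule): the $\sle$-crystal on $\cF_{e,\bt}$, restricted to the connected component of $\bemp$, is canonically isomorphic to the crystal whose vertices are the simple modules $D^\bla$ of the tower $(\mathcal{H}_{R,m}^\bt)_{m\geq0}$, and in which the arrow $\widetilde{f}_i$ sends $[D^\bla]$ to $[\,\mathrm{head}\,F_i D^\bla\,]$, where $F_i$ denotes $i$-induction, i.e. the generalized eigenspace of the induction functor $\Ind$ cut out by the residue-$i$ cyclotomic Jucys--Murphy eigenvalue. The same statement applies verbatim to the tower $(\widetilde{\mathcal{H}}_{R,m}^\bt)_{m\geq0}$, whose simples are labelled by $\kl_{e,-\bt_\rev}$ and whose branching graph is the $\sle$-crystal on $\cF_{e,-\bt_\rev}$.

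Next I would analyze the functor $F=\theta^\ast$ induced by the isomorphism $\theta\colon\mathcal{H}_{R,m}^\bt\to\widetilde{\mathcal{H}}_{R,m}^\bt$. Because $\theta$ is given by a uniform formula on the generators $T_0,\dots,T_{m-1}$, it is compatible with the tower embeddings, so $F$ is an equivalence of abelian categories commuting with $\Ind$ and $\Res$ and preserving heads and socles; moreover $F(\widetilde{D}^{\fm_{e,\bt}(\bla)})\simeq D^\bla$ by construction. The crucial point, and the step I expect to be the main obstacle, is to compare the \emph{colours} of induction on the two sides: I must show that $F$ intertwines $(-i)$-induction on the $\widetilde{\mathcal{H}}$-tower with $i$-induction on the $\mathcal{H}$-tower, i.e. $F\circ F_{-i}^{\widetilde{\mathcal{H}}}\simeq F_i^{\mathcal{H}}\circ F$. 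This is where the sign reversal originates: the substitution $T_i\mapsto -v\widetilde{T}_i$, $T_0\mapsto\widetilde{T}_0$, together with the passage from parameter $v$ to $v^{-1}$ and from charge $\bt$ to the negated-reversed charge, sends the Jucys--Murphy eigenvalue attached to residue $i$ to the one attached to residue $-i$. Verifying this requires tracking the eigenvalues carefully through the definition of the $i$-restriction and $i$-induction functors and the resulting central decomposition of $\Ind$.

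Granting this intertwining, the commutation relation follows by a direct computation. Writing $\bmu=\fm_{e,\bt}(\bla)$, so that $F(\widetilde{D}^\bmu)\simeq D^\bla$, I obtain
\begin{align*}
F\bigl(\widetilde{D}^{\,\widetilde{f}_{-i}\bmu}\bigr)
&=F\bigl(\mathrm{head}\,F_{-i}^{\widetilde{\mathcal{H}}}\widetilde{D}^\bmu\bigr)
=\mathrm{head}\,F\bigl(F_{-i}^{\widetilde{\mathcal{H}}}\widetilde{D}^\bmu\bigr)\\
&=\mathrm{head}\,F_i^{\mathcal{H}}F\bigl(\widetilde{D}^\bmu\bigr)
=\mathrm{head}\,F_i^{\mathcal{H}}D^\bla
=D^{\,\widetilde{f}_i\bla},
\end{align*}
using in turn the branching rule on the $\widetilde{\mathcal{H}}$-tower, that the equivalence $F$ commutes with $\mathrm{head}$, the colour-swapping intertwining, the defining property of $\fm_{e,\bt}$, and the branching rule on the $\mathcal{H}$-tower. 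This says precisely $\fm_{e,\bt}(\widetilde{f}_i\bla)=\widetilde{f}_{-i}\fm_{e,\bt}(\bla)$. The base case $\fm_{e,\bt}(\bemp)=\bemp$ is immediate, since for $m=0$ both algebras equal $R$ and $F$ carries the unique simple to the unique simple. By the uniqueness in \Cref{FJLmul}, $\fm_{e,\bt}=\Phi_{e,\bt}$, which gives the claim after undoing the reduction.
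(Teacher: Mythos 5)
A preliminary remark: the paper gives no proof of this statement --- it is attributed to Fayers and used as a black box --- so there is no internal argument to compare yours against, and I assess your proposal on its own merits.

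Your architecture is sound, and it is essentially a reconstruction of the known proof (Fayers' cyclotomic extension of the level-one branching-rule argument of Kleshchev and Lascoux--Leclerc--Thibon): reduce to an asymptotic charge $\bt$ via \Cref{ariki}, noting that $-\bt_\rev$ is then automatically asymptotic as well, so that both the source $\kl_{e,\bt}(n)=\ug_{e,\bt}(n)$ and the target $\kl_{e,-\bt_\rev}(n)=\ug_{e,-\bt_\rev}(n)$ of $\fm_{e,\bt}=\fm_{e,\bs}$ consist of Uglov multipartitions; invoke the modular branching rules for the two towers; use that $F=\theta^\ast$ is an abelian equivalence compatible with the tower embeddings, hence with $\Ind$, $\Res$ and heads; and conclude by the uniqueness clause of \Cref{FJLmul}. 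The reduction, the base case, and the closing computation are all correct.

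The caveat is that your flagged ``main obstacle'' --- the intertwining $F\circ F^{\widetilde{\mathcal{H}}}_{-i}\simeq F^{\mathcal{H}}_{i}\circ F$ --- is exactly where the entire content of the theorem is concentrated, and your plan for closing it (``tracking the eigenvalues\ldots through the central decomposition of $\Ind$'') slightly misplaces the difficulty. The eigenvalue tracking itself is immediate: with the standard normalization $L_{k+1}=v^{-1}T_kL_kT_k$ and $\widetilde{L}_{k+1}=v\widetilde{T}_k\widetilde{L}_k\widetilde{T}_k$, one computes $\theta(L_{k+1})=v^{-1}(-v\widetilde{T}_k)\,\theta(L_k)\,(-v\widetilde{T}_k)=v\widetilde{T}_k\,\theta(L_k)\,\widetilde{T}_k$, whence $\theta(L_k)=\widetilde{L}_k$ by induction; so $F$ preserves generalized Jucys--Murphy eigenvalues \emph{on the nose}, and no sign appears at this step. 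The sign change lives entirely in the residue dictionaries: on the $\mathcal{H}$-side, colour $i$ means eigenvalue $v^{i}$, while on the $\widetilde{\mathcal{H}}$-side, whose intrinsic quantum parameter is $v^{-1}$, colour $j$ in the $\cF_{e,-\bt_\rev}$-labelling of its simples must be matched with eigenvalue $v^{-j}$. In other words, your assertion that the branching-rule statement ``applies verbatim'' to the tower $(\widetilde{\mathcal{H}}^{\bt}_{R,m})_{m}$ with simples labelled by $\kl_{e,-\bt_\rev}$ is precisely what requires proof: Ariki's theorem applied to $\widetilde{\mathcal{H}}$ with its intrinsic parameter and charge ($v^{-1}$ and $\bt_\rev$) has to be reconciled with the paper's fixed crystal conventions, and it is this convention comparison --- carried out by Fayers --- that simultaneously produces the labelling by $\kl_{e,-\bt_\rev}$ and the dictionary just described. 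Once that input is granted in this precise form, your intertwining follows trivially from $\theta(L_k)=\widetilde{L}_k$, and the rest of your argument is complete. So: right strategy and correct formal skeleton, but as written the proposal reduces the theorem to its convention-theoretic core rather than proving it.
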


To summarize, starting with the usual Mullineux involution $m_e$ for the symmetric group, we obtain:
 \begin{itemize}
 \item a generalization of $m_e$ : the involution $\mathfrak{m}_{e,{\bs}}$ on the set of Kleshchev $\ell$-partitions which label the irreducible representations of cyclotomic Hecke algebras. 
 If $\ell=1$, we have $\mathfrak{m}_{e,{\bs}}=m_e$.
  \item a generalization of $\mathfrak{m}_{e,{\bs}}$ : the involution ${\tt\Phi}_{e,\bs}$ on the set of Uglov  $\ell$-partitions. 
  If $\bs$ is such that  $s_i-s_{i-1}>n-1$ for all $i=2,\ldots,\ell$, we have ${\tt\Phi}_{e,\bs}=\mathfrak{m}_{e,{\bs}}$.
\end{itemize}

\section{The generalized Mullineux involution}\label{section_genmul}

Remember from Section \ref{fock} that we have fixed $e,\ell\geq 2$ and $s\in\Z$.
Let us denote   $\cF_{e,s}=\bigoplus_{\bs\in\Z^\ell(s)}\cF_{e,\bs}$
and $\cF_e= \bigoplus_{s\in\Z}\cF_{e,s}$.

\subsection{Triple crystal structure}

By Section \ref{fock}, the space $\cF_{e,s}$ has a structure of integrable $\Ue$-module of level $\ell$. 
This space can  also be endowed with a structure of $\Ul$-module of level $e$.
Denote by $\dot{\Lambda}_0$, \ldots, $\dot{\Lambda}_{l-1}$ the fundamental weights attached to the 
Kac-Moody algebra $\sll$. The simple roots are denoted by $\dot{\alpha}_0, \ldots, \dot{\alpha}_{\ell-1}$ 
and $\dot{\delta}$ is the null root.    
We denote by $\dot{\mathcal{P}}=\bigoplus_{0\leq i\leq \ell-1}  \Z\dot{\Lambda}_i \oplus \mathbb{Z} \dot{\delta}$ the corresponding weight lattice.

Following \cite{Gerber2016}, there is a \textit{level-rank duality} between
$\ell$-partitions and $e$-partitions.
This is a map
$$
\begin{array}{rrcl}
\mathsf{k}_{s}^{\ell,e}: & \Pi^\ell_s & \lra & \Pi^e_{-s} \\
\end{array}
$$
inducing a linear map between the Fock spaces $\cF_{e,s}\lra\cF_{\ell,-s}$. To avoid cumbersome notations, write $\mathsf{k}$ for $\mathsf{k}_{s}^{\ell,e}$ and $\dot{\mathsf{k}}$ for $\mathsf{k}_{-s}^{e,\ell}$.
From \cite[Formula (3.8)]{Gerber2016}, 
it is straightforward that $\dot{\mathsf{k}}\circ\mathsf{k}=\id_{\Pi_s^\ell}$ and $\mathsf{k}\circ \dot{\mathsf{k}}=\id_{\Pi_{-s}^e}$.

We can extend $\mathsf{k}$ linearly to $\cF_{e,s}$, which endows it with the structure of a $\Ul$-module, 
by considering the natural action on $\cF_{\ell,-s}$ and composing with $\dot{\mathsf{k}}$. 
This yields an $\sll$-crystal structure on $\cF_{e,s}$.
More precisely, if we denote by $\tdf_j$, $j=0,\ldots,\ell-1$ the lowering $\sll$-crystal operators,
the action of $\tdf_j$ on an $\ell$-partition is defined by $\dot{\mathsf{k}}\circ\tilde{\dot{f}}_{j}\circ\mathsf{k}$, as indicated by the following diagram:
\begin{equation}\label{lr_diag}
 \begin{tikzcd}
 \Pi^\ell_s \arrow[dashrightarrow]{d}{} \arrow{r}{\mathsf{k}}
 &
 \Pi^e_{-s} \arrow{d}{\tilde{\dot{f}}_{j}} 
 \\
 \Pi^\ell_s 
 &
 \Pi^e_{-s} \arrow{l}{\dot{\mathsf{k}}}
 \end{tikzcd}
 \end{equation}

\begin{remark}\label{Ugyv}
\begin{enumerate}
 \item As explained in \cite[Section 7.1]{Gerber2016}, the map $\mathsf{k}$ is, up to a twist by conjugation,
categorified by \textit{Koszul duality} between the corresponding Cherednik categories $\cO$. This justifies the notation.
\item The level-rank duality $\mathsf{k}$ used in our paper is  
not the same as the one  used in Yvonne and Uglov's paper. However, our map can be recovered from  Uglov and Yvonne's ones by composing with the map  $|\bla,\bs\rangle \mapsto  |\bla^\mathrm{tr}_{\rev},-\bs_{rev}\rangle$, where for $\bla:=(\lambda^{(1)},\ldots,\lambda^{(\ell)})$ we have $\bla^\mathrm{tr}_{\rev}=((\lambda^{(\ell)})^\mathrm{tr},\ldots,(\lambda^{(1)})^\mathrm{tr})$ 
 and $\lambda^\mathrm{tr}$ is the transpose of $\lambda$. 
\end{enumerate}
\end{remark}

For $s\in \mathbb{Z}$, we denote 
$$A (s)=\left\{ (s_1,\ldots,s_\ell)\in \mathbb{Z}^\ell (s)\ |\ s_1\leq \ldots \leq s_\ell \leq s_1+e \right\}$$
and, in a dual fashion,
$$\dot{A} (s)=\left\{ (t_1,\ldots,t_e)\in \mathbb{Z}^e (s)\ |\ t_1\leq \ldots \leq t_e \leq t_1+\ell \right\}$$
Write $\dot{\bemp}=(\emp,\ldots,\emp)\in\Pi^e$. 
Note that for $\bs\in A(s)$, the set  $\ug_{e,\bs}$ has a  convenient non-recursive definition,  see \cite[Theorem 2.10]{FLOTW1999}.
By \cite[Formula (3.8)]{Gerber2016}, if $\bs\in A(s)$, then 
$\mathsf{k} |\bemp,\bs\rangle = |\dot{\bemp},\dbs\rangle$ for some $\dbs\in\dot{A}(-s)$.

Finally, there is an $\slinf$-crystal structure on $\Pi_s^\ell$ arising from the action of a Heisenberg algebra \cite{ShanVasserot}, \cite{Losev2015}, \cite{Gerber2016a}. 
Its connected components are all isomorphic to the branching graph of the symmetric group in characteristic $0$ and thus have vertices in bijection with $\Pi$. 
If $\bla_0$ is a highest weight vertex for the $\slinf$-crystal, then any $\ell$-partition in the same crystal component as $\bla_0$ is obtained as 
$\tilde{a}_\sigma(\bla_0)$ for a unique $\sigma\in\Pi$,
where $\ta_\sigma$ denotes the Heisenberg crystal operator associated to $\sigma$, see \cite{Losev2015} \cite{Gerber2016a}.

We will make repeated use of the following important theorem, proved in \cite[Theorems 6.17 and 6.19]{Gerber2016}, and its corollary.

\begin{theorem}\label{3crystals} \
\begin{enumerate}
 \item The three crystals pairwise commute.
 \item Every $|\bla,\bs\rangle\in\Pi^\ell_\bs$ decomposes as
 $$|\bla,\bs\rangle=\tdf_{j_r}\ldots \tdf_{j_1} \ta_{\sigma}   \tf_{i_p}\ldots \tf_{i_1}|\bemp,\br\rangle$$
 for some $\br \in A(s)$, $\sigma\in \Pi$, $p,r\in \mathbb{Z}_{\geq 0}$ and for some $i_p,\ldots,i_1\in \{0,1,\ldots,e-1\}$ and $j_r,\ldots,j_1\in \{0,1,\ldots,\ell-1\}$.
\end{enumerate}
\end{theorem}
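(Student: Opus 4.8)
The plan is to establish the three commutativity statements of part (1) separately, and then to read off the decomposition in part (2). The common source of all three structures is Uglov's realization of $\cF_{e,s}$ as a space of charged semi-infinite wedges: a charged $\ell$-partition $|\bla,\bs\rangle$ is encoded by a strictly decreasing sequence of beads on $\Z$, and reducing each bead position modulo $e\ell$ resolves the single index into an ``$e$-coordinate'', an ``$\ell$-coordinate'', and a residual ``block'' coordinate. On this wedge space $\Ue$ acts through the $e$-coordinate, the level-rank dual $\Ul$-action transported by $\mathsf{k}$ acts through the $\ell$-coordinate, and a Heisenberg algebra acts through the block coordinate. First I would record, following Uglov, that these three \emph{module} actions pairwise commute. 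The real content of the theorem is to upgrade this to commutativity of the \emph{crystal} operators $\tf_i$, $\tdf_j$, $\ta_\sigma$, which does not follow formally, since each family of Kashiwara operators is extracted through its own $t\to 0$ specialization.

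The commutativity of the $\sle$- and $\sll$-crystals is, I expect, the main obstacle. The strategy is to exhibit a single crystal lattice together with a single crystal basis stable under both families of Kashiwara operators; Uglov's canonical basis of the wedge space is the natural candidate, and establishing that the associated lattice is stable under both families simultaneously is the technical core. Granting this, commutativity $\tf_i\tdf_j=\tdf_j\tf_i$ reduces to an identity on the crystal basis. Since $\tdf_j$ transports the native $\sll$-crystal operator on $\cF_{\ell,-s}$ through the level-rank bijection $\mathsf{k}$, this identity becomes a purely combinatorial comparison on the $e\ell$-abacus: adding an $\sle$-good box moves a bead within the $e$-coordinate, whereas the transported $\sll$-good box rule moves a bead within the $\ell$-coordinate. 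These two elementary bead moves act on independent coordinates and so commute; the delicate point is to verify that the auxiliary data governing ``goodness'' — the residue statistics and the total order on beads — is left unaffected by the move in the other coordinate, so that good boxes remain good.

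The commutativity of the Heisenberg ($\slinf$) crystal with each affine crystal should be comparatively soft. The key observation is that $\ta_\sigma$ alters the $\sle$-weight, and likewise the $\sll$-weight, only by an integer multiple of the null root $\delta$, so that it acts along the block coordinate, orthogonally to the directions governing $\tf_i$ and $\tdf_j$. Using the explicit description of $\ta_\sigma$ as the addition of $e$-periodic strips in the block coordinate, I would check directly that it commutes with the elementary affine bead moves, which again reduces to an independence-of-coordinates statement on the abacus.

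For part (2) I would argue by successive passage to sources, peeling operators off from the outside. Given $|\bla,\bs\rangle$, first ascend in the $\sll$-crystal to its source $X$; then ascend in the Heisenberg crystal, which amounts to a single operator $\ta_\sigma$, since every Heisenberg component is obtained from its source by applying $\ta_\sigma$ for a unique $\sigma\in\Pi$, reaching a Heisenberg source $Y$ with $X=\ta_\sigma Y$; finally ascend in the $\sle$-crystal to its source $Z$, so that $Y=\tf_{i_p}\ldots\tf_{i_1}Z$. By the pairwise commutativity established in part (1), each ascent preserves the source property for the two other crystals, so $Z$ is a highest weight vertex for all three crystals simultaneously. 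It remains to classify such triple highest weight vertices, and here the wedge realization is decisive: a vertex that is a source for all three structures must correspond to a vacuum wedge, which forces $Z=|\bemp,\br\rangle$ with $\br\in A(s)$ — precisely the regime in which $\mathsf{k}|\bemp,\br\rangle=|\dot{\bemp},\dbs\rangle$ with $\dbs\in\dot{A}(-s)$. Reassembling the operators recorded along the three ascents then yields $|\bla,\bs\rangle=\tdf_{j_r}\ldots\tdf_{j_1}\,\ta_\sigma\,\tf_{i_p}\ldots\tf_{i_1}|\bemp,\br\rangle$, as claimed.
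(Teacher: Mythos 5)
The first thing to say is that the paper does not prove Theorem \ref{3crystals} at all: it imports it verbatim from \cite[Theorems 6.17 and 6.19]{Gerber2016}, so your attempt can only be compared with the proof in that reference, which indeed lives in Uglov's wedge/abacus model. Your overall skeleton --- commuting structures on the wedge space, then part (2) by ascending to sources successively in the $\sll$-, $\slinf$- and $\sle$-crystals and invoking a classification of the simultaneous sources as $\left\{|\bemp,\br\rangle \mid \br\in A(s)\right\}$ --- is the same architecture, and the reduction of (2) to (1) plus that classification is logically sound.

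As a proof, however, every load-bearing step is asserted rather than carried out, and the shortcuts offered in their place do not work. First, the claim that the $\sle$- and transported $\sll$-moves ``act on independent coordinates and so commute'' is false as stated: $\mathsf{k}$ is a bijection, so any $\sle$-move on $|\bla,\bs\rangle$ necessarily changes the dual reading $\mathsf{k}|\bla,\bs\rangle$ --- both families of operators move the \emph{same} beads of the \emph{same} charged configuration, read through two different foldings, and there is no product structure to appeal to. Whether a good box in one reading stays good after a move in the other reading is precisely the content of the theorem, and it is the point you explicitly defer; the alternative route via a single crystal lattice is also problematic, since the level-rank dual action is at parameter $-1/t$ (see the definition of $\Ul$), so the two crystal structures are not extracted from the same $t\to 0$ limit, and in any case the paper's $\sll$-crystal is \emph{defined} by combinatorial transport through $\mathsf{k}$, making the issue irreducibly combinatorial. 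Second, for the Heisenberg--affine commutation, the observation that $\ta_\sigma$ shifts weights by multiples of $\delta$ is a non sequitur: weight bookkeeping never yields commutation of crystal operators, and this commutation is itself a substantial theorem (essentially the main results of \cite{Gerber2016a} and \cite{Losev2015}), not a soft check. Third, the classification of triple sources needed in part (2) is asserted with the phrase ``must correspond to a vacuum wedge,'' but this is a genuine theorem and is charge-sensitive: $|\bemp,\bs\rangle$ is a source for the $\sle$- and $\slinf$-crystals for \emph{every} $\bs\in\Z^\ell(s)$, yet it is a source for the $\sll$-crystal only when $\bs\in A(s)$, and ruling out nonempty simultaneous sources requires the totally-periodic-abacus analysis of \cite{Gerber2016}. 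In short: the skeleton agrees with the cited proof, but all of the actual content --- the three commutations and the classification of simultaneous highest weight vertices --- is missing.
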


\begin{corollary}\label{bijtriple}
The elements $\br,\si,p$ and $r$ of Theorem \ref{3crystals} are uniquely determined by $|\bla,\bs\rangle$. 
This yields a bijection
$$
\begin{array}{rrcl}
\beta : & \Pi^\ell_s & \lra &\ds \bigsqcup_{\br \in A (s)}  \ug_{e,\br} \times \Pi \times \ug_{\ell,\dbr} \\
& |\bla,\bs\rangle &\longmapsto & ( \tf_{i_p}\ldots \tf_{i_1}|\bemp,\br\rangle, \sigma, \tdf_{j_r}\ldots \tdf_{j_1}|\dot{\bemp},\dot{\br}\rangle).
\end{array}
$$
\end{corollary}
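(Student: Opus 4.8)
The existence of the decomposition in \Cref{3crystals}(2) is given; what remains for the corollary is to show that $\br$, $\sigma$, $p$ and $r$ (equivalently, the three components of the triple) are intrinsic to $|\bla,\bs\rangle$, and that the resulting $\beta$ is a bijection. The plan is to use the pairwise commutation of the three crystals in \Cref{3crystals}(1) to peel them off one at a time and then to write down an explicit inverse. The only general input needed is the standard consequence of commutation for (semi)normal crystals: if two of the crystal structures commute, then the lowering operators of one send highest-weight vertices of the other to highest-weight vertices; more precisely, a string of operators of the first crystal joining two vertices induces an isomorphism, matching highest-weight vertices, between their connected components for the second crystal.

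To recover the data---hence to prove well-definedness and injectivity of $\beta$---I would start from $|\bla,\bs\rangle$ and pass to the unique highest-weight vertex $W$ of its $\sll$-component by applying $\sll$-raising operators. Since $|\bemp,\br\rangle$ is highest weight for all three crystals and the $\sle$- and Heisenberg operators commute with the $\sll$-operators, the element $\ta_\sigma\tf_{i_p}\cdots\tf_{i_1}|\bemp,\br\rangle$ appearing in any valid decomposition is $\sll$-highest weight; as $|\bla,\bs\rangle$ is obtained from it by the $\sll$-string $\tdf_{j_r}\cdots\tdf_{j_1}$, it must equal $W$. Thus $W$ is intrinsic. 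Next I pass to the unique highest-weight vertex of the Heisenberg component of $W$: by the same commutation argument $X:=\tf_{i_p}\cdots\tf_{i_1}|\bemp,\br\rangle$ is Heisenberg-highest weight, so it is this vertex, and $\sigma$ is the unique partition with $W=\ta_\sigma X$. Then $X\in\ug_{e,\br}$ is the first component, $\br\in A(s)$ is its (and $W$'s) charge, and $p=|X|$, all now determined by $|\bla,\bs\rangle$. Finally, the third component comes from the position of $|\bla,\bs\rangle$ in the $\sll$-component of $W$: transporting this position through the component isomorphism back to the component of $|\bemp,\br\rangle$ and applying $\mathsf{k}$ (recall $\mathsf{k}|\bemp,\br\rangle=|\dot{\bemp},\dbr\rangle$) yields exactly $\tdf_{j_r}\cdots\tdf_{j_1}|\dot{\bemp},\dbr\rangle\in\ug_{\ell,\dbr}$, whose size gives $r$.

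For surjectivity I would define the candidate inverse $\gamma(X,\sigma,Z)=\tdf_{j_r}\cdots\tdf_{j_1}\,\ta_\sigma X$, where $(j_1,\dots,j_r)$ is any string realizing $Z=\tdf_{j_r}\cdots\tdf_{j_1}|\dot{\bemp},\dbr\rangle$ on the $e$-partition side. The main obstacle is to show $\gamma$ is well defined, i.e. that this value depends only on $Z$ and not on the chosen string, and this is precisely where commutation does the real work: the $\sle$- and Heisenberg operators building $\ta_\sigma X$ from $|\bemp,\br\rangle$ induce an isomorphism of $\sll$-components from that of $|\bemp,\br\rangle$ onto that of $\ta_\sigma X=W$, sending $|\bemp,\br\rangle$ to $W$ and commuting with the $\sll$-operators; since the $\sll$-component of $|\bemp,\br\rangle$ is identified via $\mathsf{k}$ with $\ug_{\ell,\dbr}$, the endpoint $\tdf_{j_r}\cdots\tdf_{j_1}W$ corresponds to $Z$ and is thus string-independent. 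The same isomorphism shows $\beta$ and $\gamma$ are mutually inverse. The delicate point throughout is the book-keeping of charges under the $\sll$-operators (which alone move the charge within $\Z^\ell(s)$) and the verification that the highest-weight charge is the normalized $\br\in A(s)$; this is controlled by the level-rank normalization $\mathsf{k}|\bemp,\br\rangle=|\dot{\bemp},\dbr\rangle$ from \cite[Formula (3.8)]{Gerber2016}.
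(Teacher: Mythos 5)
Your proof is correct and follows essentially the same route as the paper's: both exploit the pairwise commutation of the three crystals to peel off the decomposition one crystal at a time, using uniqueness of highest-weight vertices in connected components, and then exhibit the explicit inverse map whose well-definedness (independence of the chosen strings) again rests on commutation. The only differences are cosmetic: you peel in the order $\sll$, then Heisenberg, then read off $\br$, $p$ and the third component, and phrase uniqueness as an intrinsic characterization of each datum, whereas the paper compares two putative decompositions starting with the $\sle$-crystal.
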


\begin{proof}
Let $\bla\in\Pi^\ell$. By Theorem \ref{3crystals} (2), there exist 
$\br \in A(s)$, $\sigma\in \Pi$, $p,r\in \mathbb{Z}_{\geq 0}$ and 
elements  $i_p,\ldots,i_1\in \{0,1,\ldots,e-1\}$ and $j_r,\ldots,j_1\in \{0,1,\ldots,\ell-1\}$ such that 
$$|\bla,\bs\rangle=\tdf_{j_r}\ldots \tdf_{j_1} \ta_{\sigma}   \tf_{i_p}\ldots \tf_{i_1}|\bemp,\br\rangle$$
Assume that we have:
$$\tdf_{j'_{r'}}\ldots \tdf_{j'_1} \ta_{\sigma'}   \tf_{i'_{p'}}\ldots \tf_{i'_1}|\bemp,\br'\rangle =\tdf_{j_r}\ldots \tdf_{j_1} \ta_{\sigma}   \tf_{i_p}\ldots \tf_{i_1}|\bemp,\br\rangle$$
for $\br' \in A(s)$, $\sigma'\in \Pi$, $p',r'\in \mathbb{Z}_{\geq 0}$ and 
indices  $i'_{p'},\ldots,i'_1\in \{0,1,\ldots,e-1\}$ and $j'_{r'},\ldots,j'_1\in \{0,1,\ldots,\ell-1\}$.
Then the elements  $|\bmu',{\bf t} '\rangle:= \tdf_{j'_{r'}}\ldots \tdf_{j'_1} \ta_{\sigma'}    |\bemp,\br' \rangle$ and 
$|\bmu,{\bf t} \rangle:=\tdf_{j_r}\ldots \tdf_{j_1} \ta_{\sigma}  |\bemptyset,\br\rangle$ are both highest weight vertices in the $\sle$-crystal.
As we have  $\tf_{i'_{p'}}\ldots \tf_{i'_1} |\bmu',{\bf t} '\rangle =\tf_{i_{p}}\ldots \tf_{i_1} |\bmu,{\bf t} \rangle$, these two elements are in the same connected component of 
the $\sle$-crystal so they must be equal.  From this equality, we deduce 
in the same way  that the two $\sle$-highest weight vertices must be equal:  
$\ta_{\sigma'}    |\bemp,\br' \rangle= \ta_{\sigma}    |\bemp,\br \rangle$.  
By the description of the $\slinf$-crystal operators \cite{Losev2015} \cite{Gerber2016}, we obtain $\sigma=\sigma'$ and $\br'=\br$.
We deduce that  $ \tdf_{j'_{r'}}\ldots \tdf_{j'_1}    |\dot{\bemp},\dbr \rangle=\tdf_{j_r}\ldots \tdf_{j_1}  |\dot{\bemp},\dbr\rangle$,
where $|\dot{\bemp},\dbr\rangle=\mathsf{k}|\bemp,\br\rangle$.
In particular, we have $r'=r$.
Using the same argument but exchanging the role of $e$ and $\ell$, we also get 
$\tf_{i'_{p'}}\ldots \tf_{i'_1} |\bemp,\br \rangle =\tf_{i_{p}}\ldots \tf_{i_1} |\bemp,\br \rangle$ and $p'=p$.
This proves uniqueness, and therefore $\be$ is well-defined.

For $\br \in A (s)$ and $(\bnu, \bpi)\in  \ug_{e,\br} \times \ug_{\ell,\dbr} $,
by \cite[Theorem 2.10]{FLOTW1999}, there exist indices $i_1,\ldots,i_p\in \{0,1,\ldots,e-1\}$ and $j_1,\ldots,j_r\in \{0,1,\ldots,\ell-1\}$
such that 
$$\tf_{i_p}\ldots \tf_{i_1}|\bemp,\br\rangle=|\bnu,\br\rangle \text{ and }\tdf_{j_r}\ldots \tdf_{j_1}|\dot{\bemp},\dot{\br}\rangle =|\bpi,\dbr\rangle.$$
Now, the map $\de : \bigsqcup_{\br \in A (s)}  \ug_{e,\br} \times \Pi \times \ug_{\ell,\dbr} \to \Pi^\ell_s,
(\bnu,\sigma,\bpi)\mapsto \tdf_{j_r}\ldots \tdf_{j_1} \ta_\si\tf_{i_p}\ldots \tf_{i_1}|\bemp,\br\rangle$
is well-defined, since it does not depend on the choice of the indices and the 3 crystals commute.
It is straightforward that $\be$ and $\de$ are inverse to each other, which concludes the proof.
\end{proof}

\begin{example}\label{example_triple}
Take $s=1$, $e=3$, $\ell=4$, $\bs=(-3,2,1,1)$ and $\bla=(\emp, 3.2^2,\emp,3)$.
Then
\begin{align*}
\beta ( |\bla,\bs\rangle) & = ( \,\, |(\emp,\emp,\emp,3),(-1,0,0,2)\rangle \,\, , \,\, (2) \,\, , \,\, |(2^2,2.1,\emp),(-1,-1,1)\rangle \,\,) \\
& =( \,\, \tf_1\tf_0\tf_2 \, |(\emp,\emp,\emp,\emp),(-1,0,0,2)\rangle \,\, , \,\, (2) \,\, , \,\, \tdf_0\tdf_2\tdf_3\tdf_3\tdf_0\tdf_2\tdf_3\, |(\emp,\emp,\emp),(-1,-1,1)\rangle \,\, ).
\end{align*}
\end{example}

\begin{remark}\label{rem_bij}
Note that for $\ell=1$, Corollary \ref{bijtriple} reduces to a very simple bijection.
Indeed, there is no $\sll$-crystal (and no level-rank duality) in this case,
and the bijection associates to any partition $\la$ a pair of partitions $(\rho, \si)$
determined by the ``euclidean division'' of $\la$ by $e$, as follows.
Given two partitions $\mu$ and $\mu'$, let $\mu \sqcup \mu'$ be the partition obtained by concatenating the two partitions and then reordering the parts to obtain a partition 
 (see for instance \cite[Section 3.1]{DudasJacon2018}). Then we can uniquely write
$$\la =(\sigma)^e \sqcup  \rho$$
where $\rho$ is an $e$-regular partition and $\si\in\Pi$.
\end{remark}

\begin{example}
Choose $e=3$ and $\la=(4^4.3^2.2.1^8)$. Then 
$\la = (4.1^2)^3 \sqcup (4.3^2.2.1^2)$.
\end{example}

\subsection{The generalized Mullineux map} \label{subsec_genmul}
In the following, we will need to go from one  indexation by $\ell$-partitions to the other by $e$-partitions using the map $\mathsf{k}$. 
We will use the  relationship between the weight spaces for the action of $\sle$ and the weight spaces for the action of $\sll$ of $\cF_{s,e}$.

We start by defining a map $\theta_{\ell,e,s}$ by setting 
$$\begin{array}{lccc}
\theta_{\ell,e,s} :& \mathbb{Q}^{\ell} (s) &\lra &\mathbb{Q}^{\ell} (e)\\
 & (s_1,\ldots, s_{\ell}) & \longmapsto & (e-s_1+s_{\ell},s_1-s_2,\ldots,s_{{\ell}-1}-s_{\ell}).
 \end{array}
$$
This is a bijection with inverse map 
$$\begin{array}{lccc}
\theta_{\ell,e,s}^{-1} :& \mathbb{Q}^{\ell} (e) &\lra &\mathbb{Q}^{\ell} (s)\\
 & (a_1,\ldots, a_{\ell}) & \longmapsto & (s_1,\ldots,s_{\ell}),
 \end{array}
$$
where we have for all $1\leq i\leq \ell$:
$$s_i=\frac{1}{\ell} (s-\sum_{1\leq j\leq \ell-1} j a_{j+1} )+\sum_{i+1\leq j\leq \ell} a_j.$$

\begin{lemma}\label{combij}
Keeping the above notation, assume that $\bs=\theta_{\ell,e,s}^{-1} (a_1,\ldots,a_{\ell})$ then we have $-\bs_\rev=\theta_{\ell,e,-s}^{-1} (a_1,a_\ell,\ldots,a_{2})$
\end{lemma}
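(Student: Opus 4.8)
The plan is to avoid the unwieldy closed-form expression for $\theta_{\ell,e,s}^{-1}$ entirely and instead verify the claim by applying the much simpler forward map $\theta_{\ell,e,-s}$ directly to $-\bs_\rev$. Since $\theta_{\ell,e,-s}$ is a bijection, the asserted identity $-\bs_\rev=\theta_{\ell,e,-s}^{-1}(a_1,a_\ell,\ldots,a_2)$ is equivalent to $\theta_{\ell,e,-s}(-\bs_\rev)=(a_1,a_\ell,\ldots,a_2)$, and the latter is a direct computation from the definition of $\theta$.

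First I would unwind the hypothesis: $\bs=\theta_{\ell,e,s}^{-1}(a_1,\ldots,a_\ell)$ means exactly $(a_1,\ldots,a_\ell)=\theta_{\ell,e,s}(\bs)$, so that $a_1=e-s_1+s_\ell$ and $a_i=s_{i-1}-s_i$ for $2\leq i\leq\ell$. Writing $-\bs_\rev=(s_1',\ldots,s_\ell')$ with $s_i'=-s_{\ell+1-i}$, I would first check the membership conditions so that both sides live in the intended sets: $\sum_i s_i'=-\sum_j s_j=-s$, so $-\bs_\rev\in\mathbb{Q}^\ell(-s)$, while $(a_1,a_\ell,\ldots,a_2)$ is merely a permutation of $(a_1,\ldots,a_\ell)\in\mathbb{Q}^\ell(e)$ and hence still sums to $e$.

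The core of the argument is to compute the components of $\theta_{\ell,e,-s}(-\bs_\rev)$ one by one. For the first component, $e-s_1'+s_\ell'=e-(-s_\ell)+(-s_1)=e+s_\ell-s_1=a_1$. For $2\leq i\leq\ell$, the $i$-th component is $s_{i-1}'-s_i'=(-s_{\ell+2-i})-(-s_{\ell+1-i})=s_{\ell+1-i}-s_{\ell+2-i}$, which equals $a_{\ell+2-i}$ by the formula $a_k=s_{k-1}-s_k$ applied with $k=\ell+2-i$. As $i$ runs from $2$ to $\ell$, the index $\ell+2-i$ runs from $\ell$ down to $2$, so the output tuple is precisely $(a_1,a_\ell,a_{\ell-1},\ldots,a_2)$, as required.

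The only point requiring genuine care is the index substitution $i\mapsto\ell+2-i$, which is what produces the reversal of $(a_2,\ldots,a_\ell)$; everything else is a one-line substitution. I do not expect any real obstacle, precisely because passing to the forward map $\theta_{\ell,e,-s}$ sidesteps the fractional expression for the $s_i$ in $\theta_{\ell,e,s}^{-1}$ altogether.
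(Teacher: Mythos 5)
Your proof is correct, and it takes a genuinely different (and cleaner) route than the paper's. The paper works directly with the closed-form expression for the inverse map: it writes $\mathbf{v}=\theta_{\ell,e,-s}^{-1}(a_1,a_\ell,\ldots,a_2)$, expands both $s_i$ and $v_{\ell-i+1}$ via the fractional formula $\frac{1}{\ell}\bigl(\pm s-\sum_j j a_{j+1}\bigr)+\cdots$, and checks $s_j+v_{\ell-j+1}=0$ after a change of summation index; the bookkeeping lives in seeing the two $\frac{1}{\ell}$-terms combine into $-\sum_k a_{k+1}$ and cancel against the remaining partial sums. You instead invoke the bijectivity of $\theta_{\ell,e,-s}$ (which the paper has already asserted) to replace the claim by the equivalent forward-map identity $\theta_{\ell,e,-s}(-\bs_\rev)=(a_1,a_\ell,\ldots,a_2)$, whose verification uses only the difference formulas $a_1=e-s_1+s_\ell$ and $a_k=s_{k-1}-s_k$ for $2\leq k\leq \ell$; the entire content then reduces to the substitution $k=\ell+2-i$, which produces the reversal of $(a_2,\ldots,a_\ell)$, and your index range check ($k$ running from $\ell$ down to $2$ as $i$ runs from $2$ to $\ell$) is exactly right. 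What your approach buys is the complete avoidance of the fractional expression and its cancellations — the most error-prone part of the paper's computation — at the negligible cost of using bijectivity, which is part of the stated setup anyway; your explicit verification that $-\bs_\rev\in\mathbb{Q}^\ell(-s)$ and that $(a_1,a_\ell,\ldots,a_2)\in\mathbb{Q}^\ell(e)$ is also a point of care the paper leaves implicit.
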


\begin{proof}
Write $\bs=(s_1,\ldots,s_\ell)$ and ${\bf v}=\theta_{\ell,e,-s}^{-1} (a_1,a_\ell,\ldots,a_{2})$. On the one hand, we have for all $i=1,\ldots,\ell$
$$s_i=\frac{1}{\ell} (s-\sum_{1\leq j\leq \ell-1} j a_{j+1} )+\sum_{i+1\leq j\leq \ell} a_j,$$
and on the other hand:
$$\begin{array}{rcl}
v_{\ell-i+1}&=&\displaystyle{\frac{1}{\ell} (-s-\sum_{1\leq j\leq \ell-1} j a_{\ell-j+1} )+\sum_{\ell-i+2\leq j\leq \ell } a_{\ell-j+2}}\\
&=& \displaystyle{\frac{1}{\ell} (-s-\sum_{1\leq j\leq \ell-1} (\ell-k)  a_{k+1} )+\sum_{2\leq k\leq i } a_{k}}
\end{array}
$$
We obtain:
$$s_j+v_{\ell-j+1}=0$$
and the result follows. 
\end{proof}

We have the following result whose proof can be found in  \cite[Proposition 2.12]{Yvonne2007}, taking into account Remark \ref{Ugyv} (2).

\begin{proposition}\label{yv}
Let $\dbs=(\dot{s}_1,\ldots,\dot{s}_e)\in \mathbb{Z}^e (s)$ and let  $\dot{w}\in\dot{\cP}$ be a weight for $\cF_{\ell,\dbs}$. Then there exists a unique 
$\bs\in \mathbb{Z}^\ell (s)$ and a unique $w\in\cP$ such that $\mathsf{k}\left(\cF_{e,\bs}[w]\right)=\cF_{\ell,\dbs}[\dot{w}]$. 
If we write $\dot{w}=d\dot{\delta} +\sum_{0\leq i\leq \ell-1} a_i \dot{\Lambda}_{i-1}$ with $(a_1,\ldots, a_{\ell})\in \mathbb{Z}^\ell$  we have 
$$\bs=\theta_{\ell,e,s}^{-1} (a_1,a_{\ell},\ldots,a_2).$$ 
Moreover the associated weight is 
$$w=d\delta+\sum_{0\leq i\leq e-1} (\dot{s}_i-\dot{s}_{i+1}) \Lambda_i$$
where $\dot{s}_0=\ell+\dot{s}_e$.
\end{proposition}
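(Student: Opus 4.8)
The plan is to deduce the statement from Yvonne's weight-space correspondence for the Uglov--Yvonne level--rank duality and then transport it through the twist recorded in Remark \ref{Ugyv}(2). Write $\mathsf{k}^{\mathrm{UY}}$ for Uglov and Yvonne's map; \cite[Proposition 2.12]{Yvonne2007} asserts that $\mathsf{k}^{\mathrm{UY}}$ carries each $\sle$-weight space of $\cF_{e,\bs}$ onto a single $\sll$-weight space and conversely, together with the explicit dictionary between the two. The first step is to recall that every standard vector $|\bla,\bs\rangle$ is simultaneously an $\sle$-weight vector and, via $\mathsf{k}$, an $\sll$-weight vector, so that $\mathsf{k}$ restricts to a bijection between the $\sle$-weight vectors of $\cF_{e,\bs}$ and the corresponding $\sll$-weight vectors of the level-$e$ Fock spaces $\cF_{\ell,\dbs}$. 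Since both weight decompositions partition the common standard basis, it suffices to show that the $\sll$-weight $\dot w$ of $\mathsf{k}|\bla,\bs\rangle$ depends only on $\bs$ together with the $\sle$-weight of $|\bla,\bs\rangle$, and conversely; uniqueness of $\bs$ and $w$ then follows at once because $\mathsf{k}$ respects the decomposition into (charge, weight)-spaces.

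The substance is the explicit dictionary. By Remark \ref{Ugyv}(2), our $\mathsf{k}$ is obtained from $\mathsf{k}^{\mathrm{UY}}$ by composing with the involution $|\bla,\bs\rangle \mapsto |\bla^{\mathrm{tr}}_\rev, -\bs_\rev\rangle$. On the level of weights this involution realizes the diagram automorphism $\alpha_i \mapsto \alpha_{-i}$, $\Lambda_i \mapsto \Lambda_{-i}$ of $\sle$ (transposing every component negates box residues) together with the charge reversal $\bs \mapsto -\bs_\rev$, and the analogous automorphism $\dot\alpha_j \mapsto \dot\alpha_{-j}$, $\dot\Lambda_j \mapsto \dot\Lambda_{-j}$ on the $\sll$-side. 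Both automorphisms fix the respective null roots $\delta$ and $\dot\delta$, so the depth $d$ of $\dot w$ is preserved and reappears unchanged in $w = d\delta + \cdots$. I would then apply these automorphisms to Yvonne's formulas. On the classical part $\sum_{i} a_i \dot\Lambda_{i-1}$ of $\dot w$, the map $\dot\Lambda_j \mapsto \dot\Lambda_{-j}$ sends the coefficient vector $(a_1,\ldots,a_\ell)$ to $(a_1, a_\ell, a_{\ell-1},\ldots,a_2)$, since only $\dot\Lambda_0$ is fixed; this is exactly the reversal appearing in the statement, and feeding it through $\theta_{\ell,e,s}^{-1}$ while using Lemma \ref{combij} to account for the charge reversal produces $\bs = \theta_{\ell,e,s}^{-1}(a_1, a_\ell, \ldots, a_2)$.

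It remains to obtain the formula for $w$ itself, which I would extract directly from the description of the weight of a standard vector recalled in Section \ref{fock}: the $\Lambda_i$-coefficient of $w$ records how the residues modulo $e$ are distributed, and under level--rank duality this distribution is governed by the consecutive differences of the dual charge $\dbs$, giving $\sum_{0\le i\le e-1}(\dot s_i - \dot s_{i+1})\Lambda_i$ with the cyclic boundary convention $\dot s_0 = \ell + \dot s_e$ encoding the identification $\dot\Lambda_{-1} = \dot\Lambda_{\ell-1}$ across the affine node. The main obstacle, and the only genuinely delicate point, is the bookkeeping: Yvonne works with a different normalization of the level--rank duality, so I must track with care how each reversal, sign, and index shift introduced by the twist of Remark \ref{Ugyv}(2) acts on the two weight lattices, and in particular verify that the boundary term $\dot s_0 = \ell + \dot s_e$, the sign of the charge sum, and the depth all match correctly. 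Once the conventions are aligned the identities reduce to a direct comparison of the two weight formulas, and uniqueness follows as noted above.
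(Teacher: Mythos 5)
Your proposal is correct and takes essentially the same route as the paper: the paper's entire proof of this proposition is the citation of \cite[Proposition 2.12]{Yvonne2007} ``taking into account Remark \ref{Ugyv}(2)'', which is precisely your strategy. Your additional bookkeeping --- observing that the twist $|\bla,\bs\rangle\mapsto|\bla^{\mathrm{tr}}_{\rev},-\bs_\rev\rangle$ acts on both weight lattices by the diagram automorphisms $i\mapsto -i$ fixing the null roots, hence producing the reversal $(a_1,a_\ell,\ldots,a_2)$, and invoking Lemma \ref{combij} for the charge reversal --- correctly fills in the details that the paper leaves implicit.
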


We are now ready to prove the first main result of this paper.
Recall the generalized Mullineux map ${\tt\Phi}_{e,\bs}$ on Uglov $\ell$-partitions of \Cref{FJLmul}.
By level-rank duality, we have a dual Mullineux map ${\tt\Phi}_{\ell,\bt}$ for all $\bt\in\Z^e(-s)$
which acts on $e$-partitions.

\begin{theorem}\label{genmul} \ 
\begin{enumerate}
 \item There exists a unique bijection
$$\begin{array}{rccc}
\Phi:& \Pi^\ell_s &\lra& \Pi^\ell_{-s} \\
& |\bla,\bs \rangle & \longmapsto & |\bmu,-{\bs}_\rev  \rangle
\end{array}
$$
such that for all $0\leq i\leq e-1$, $\sigma \in \Pi$ and  $0\leq j\leq \ell-1$,
\begin{enumerate}
 \item $\Phi \circ \tf_i  = \tf_{-i} \circ \Phi$
\item $\Phi \circ \ta_{\sigma}  = \ta_{\sigma^\mathrm{tr}} \circ \Phi $
\item $\Phi \circ \tdf_j   = \tdf_{-j} \circ \Phi $.
\item $\Phi  (|\bemptyset,\bs \rangle)  =  |\bemptyset,-{\bs}_\rev \rangle$.
\end{enumerate}
\item Using the notation of Corollary \ref{bijtriple}, we have
$$\Phi = \beta^{-1} \circ ({\tt\Phi}_{e,\br}, (.)^\mathrm{tr}, {\tt\Phi}_{\ell,\dbr}) \circ \beta.$$
In other words, writing $|\bla,\bs\rangle=\tdf_{j_r}\ldots \tdf_{j_1} \ta_{\sigma}   \tf_{i_p}\ldots \tf_{i_1}|\bemp,\br\rangle$ with $\br\in A(s)$, 
we have $\Phi|\bla,\bs\rangle=\tdf_{-j_r}\ldots \tdf_{-j_1} \ta_{\sigma^\mathrm{tr}}   \tf_{-i_p}\ldots \tf_{-i_1}|\bemp,-\br_\rev\rangle$.
\item We have $|\bla|=|\bmu|$ if $|\bmu,-\bs_\rev\rangle=\Phi(|\bla,\bs\rangle)$.
\end{enumerate}
\end{theorem}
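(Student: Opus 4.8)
The plan is to take part (2) as the \emph{definition} of $\Phi$ and to deduce parts (1) and (3) from it. Concretely, given $|\bla,\bs\rangle\in\Pi^\ell_s$ with $\beta(|\bla,\bs\rangle)=(\bnu,\sigma,\bpi)\in\ug_{e,\br}\times\Pi\times\ug_{\ell,\dbr}$ as in Corollary \ref{bijtriple}, I would set
$$\Phi(|\bla,\bs\rangle):=\beta^{-1}\bigl({\tt\Phi}_{e,\br}(\bnu),\,\sigma^\mathrm{tr},\,{\tt\Phi}_{\ell,\dbr}(\bpi)\bigr).$$
For this to make sense I must check that the target triple lands in the $\beta$-image attached to $\Pi^\ell_{-s}$, i.e. in $\ug_{e,\br'}\times\Pi\times\ug_{\ell,\dbr'}$ for some $\br'\in A(-s)$ with level-rank dual $\dbr'$. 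Since ${\tt\Phi}_{e,\br}$ maps $\ug_{e,\br}$ to $\ug_{e,-\br_\rev}$ and ${\tt\Phi}_{\ell,\dbr}$ maps $\ug_{\ell,\dbr}$ to $\ug_{\ell,-\dbr_\rev}$, the required facts are that $-\br_\rev\in A(-s)$ (immediate from the definition of $A(s)$) and that the level-rank dual of $-\br_\rev$ is exactly $-\dbr_\rev$. This last compatibility, namely that the involution $\bs\mapsto-\bs_\rev$ on charges commutes with level-rank duality, is precisely what Lemma \ref{combij} delivers once combined with the charge correspondence of Proposition \ref{yv}. Granting it, $\Phi$ is a composite of bijections, hence a bijection $\Pi^\ell_s\to\Pi^\ell_{-s}$, which proves existence in (1) and simultaneously establishes formula (2).

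Next I would verify properties (a)--(d). Because the three crystals pairwise commute (Theorem \ref{3crystals}), the operator $\tf_i$ acts on $|\bla,\bs\rangle$ by acting only on the $\ug_{e,\br}$-coordinate of $\beta$; combined with the intertwining relation ${\tt\Phi}_{e,\br}\circ\tf_i=\tf_{-i}\circ{\tt\Phi}_{e,\br}$ of Theorem \ref{FJLmul}, this yields (a). Property (c) is identical after exchanging the roles of $e$ and $\ell$ and using the dual map ${\tt\Phi}_{\ell,\dbr}$. Property (b) holds because $\ta_\sigma$ moves only the middle $\Pi$-coordinate and the middle factor of $\Phi$ is transposition, with $|\sigma^\mathrm{tr}|=|\sigma|$. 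Finally (d) follows from ${\tt\Phi}_{e,\br}(\bemp)=\bemp$ together with the charge bookkeeping above, since $|\bemp,\bs\rangle$ has $\beta$-image with trivial $\sle$- and Heisenberg-data. Uniqueness is then formal: given any bijection satisfying (a)--(d), I would decompose an arbitrary $|\bla,\bs\rangle$ as in Theorem \ref{3crystals} and peel the operators off one at a time using (c), then (b), then (a), reducing to (d) applied to $|\bemp,\br\rangle$; this forces precisely the formula (2), so the bijection is unique.

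For part (3) I would argue at the level of $\sle$-weights. Let $\iota$ be the automorphism of the weight lattice $\cP$ determined by $\Lambda_i\mapsto\Lambda_{-i}$, $\alpha_i\mapsto\alpha_{-i}$ and $\delta\mapsto\delta$. If $\sum_c\Lambda_{s_c}-\wt(\bla)=\sum_i m_i\alpha_i$ lies in the positive root lattice, then $|\bla|=\sum_i m_i$ is its height, and since $\iota$ merely permutes the simple roots it preserves this height. Hence it suffices to show $\wt(\bmu)=\iota(\wt(\bla))$, which I would check on the decomposition of $|\bla,\bs\rangle$: the base vector contributes $\wt(|\bemp,-\br_\rev\rangle)=\iota\bigl(\wt(|\bemp,\br\rangle)\bigr)$; each replacement $\tf_i\mapsto\tf_{-i}$ shifts the weight by $-\alpha_{-i}=\iota(-\alpha_i)$; each $\ta_\sigma\mapsto\ta_{\sigma^\mathrm{tr}}$ shifts by $-|\sigma|\delta=\iota(-|\sigma|\delta)$; and each $\tdf_j\mapsto\tdf_{-j}$ shifts the $\sle$-weight $\iota$-equivariantly. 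The first three are immediate; the last is the crux.

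The main obstacle is exactly this $\iota$-equivariance of the $\sll$-crystal operators on the $\sle$-weight. The operator $\tdf_j$ lowers the $\sll$-weight by $\dot\alpha_j$, and its effect on the $\sle$-weight is governed by the explicit level-rank correspondence of Proposition \ref{yv}. What must be shown is that this correspondence intertwines $\iota$ on the $\sle$-side with its analogue $\dot\Lambda_j\mapsto\dot\Lambda_{-j}$, $\dot\alpha_j\mapsto\dot\alpha_{-j}$ on the $\sll$-side, under the substitution $\bs\mapsto-\bs_\rev$; this is the same symmetry, recorded in Lemma \ref{combij}, that already underlies the well-definedness of $\Phi$ in the first step. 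Once this single compatibility is in hand, (a)--(d), uniqueness, and the size statement (3) all follow, and I would not expect any further genuine difficulty beyond careful bookkeeping with the maps $\theta_{\ell,e,s}$ and the dual charges $\dbr$.
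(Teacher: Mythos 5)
Your overall strategy is the same as the paper's: take the formula in (2) as the \emph{definition} of $\Phi$ via the decomposition of Corollary \ref{bijtriple}, deduce (a)--(c) from Theorem \ref{FJLmul} together with the commutation of the three crystals, and get uniqueness by peeling operators off the decomposition of Theorem \ref{3crystals}; the paper also relies on exactly the two ingredients you name, Proposition \ref{yv} and Lemma \ref{combij}. There is, however, a genuine gap in how you handle the charge. The theorem asserts that $\Phi$ sends $|\bla,\bs\rangle$ to an element of charge \emph{exactly} $-\bs_\rev$, and this claim --- which is the main computational content of the paper's proof --- is never actually established in your argument. Your first paragraph only produces a bijection $\Pi^\ell_s\to\Pi^\ell_{-s}$, i.e.\ the image charge is \emph{some} element of $\Z^\ell(-s)$. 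Your verification of (d) then breaks down: for $\bs\notin A(s)$ the $\beta$-image of $|\bemp,\bs\rangle$ is $(|\bemp,\br\rangle,\emp,|\bpi,\dbr\rangle)$ where $\bpi$ is a \emph{non-empty} $e$-partition encoding $\bs$, and conversely, triples with trivial $\sle$- and Heisenberg-data do not correspond to empty multipartitions --- in the paper's example following Corollary \ref{level2}, $\beta$ sends the non-empty bipartition $|(3.3,\emp),(-1,3)\rangle$ to a triple whose first two coordinates are trivial. So knowing ${\tt\Phi}_{e,\br}(\bemp)=\bemp$ plus the bookkeeping for $\br\mapsto-\br_\rev$ tells you only that $\Phi(|\bemp,\bs\rangle)$ is a highest weight vertex for the $\sle$- and $\slinf$-crystals; both its emptiness (which requires the size statement (3)) and its charge remain to be computed.

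The fix is the computation you postpone to your last paragraph, but your formulation of it is too weak to close the gap. Proving $\wt(\bmu)=\iota(\wt(\bla))$ for the $\sle$-weights determines the image charge only through the $\Lambda$-part of the weight, i.e.\ as a multiset of residues modulo $e$; distinct charges in $\Z^\ell(-s)$ share this data, so this cannot yield (d) or the charge statement. What the paper actually does is work with the $\sll$-weight on the dual side: it writes the $\sll$-weight of $\tdf_{j_r}\ldots\tdf_{j_1}|\dot\bemp,\dbr\rangle$ in the basis of fundamental weights, negates the indices to get the weight of $\tdf_{-j_r}\ldots\tdf_{-j_1}|\dot\bemp,-\dbr_\rev\rangle$, and then invokes Proposition \ref{yv} --- which determines the $\ell$-side charge \emph{uniquely} from the $e$-side charge and $\sll$-weight --- together with Lemma \ref{combij} to conclude that this charge is $-\bs_\rev$; the size claim (3) is a second weight computation of the same kind. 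Your ``crux'' sentence does name the right intertwining statement and the right lemmas, so the plan is salvageable, but as written the logic is circular in ordering ((d) is claimed in your second paragraph, before the crux, while in fact it needs both the charge computation and (3)), and the decisive $\sll$-weight computation is asserted rather than carried out.
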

\begin{proof}
Let $\bla\in\Pi^\ell$, $\bs\in\Z^\ell(s)$ and 
write  $|\bla,\bs\rangle=\tdf_{j_r}\ldots \tdf_{j_r} \ta_{\sigma} \tf_{i_p}\ldots \tf_{i_1}  |\bemp, {\br} \rangle$
with $\br \in A(s)$ as in  Theorem \ref{3crystals}. If $\Phi$ satisfies the four assumptions of the Theorem, we have that:
$$\Phi(|\bla,\bs\rangle)=\tf_{-i_p}\ldots \tf_{-i_1} \ta_{\sigma^\mathrm{tr}} \tdf_{-j_r}\ldots \tdf_{-j_1}  |{\bemp},-{\br}_\rev\rangle$$
and this shows uniqueness.  Now, to prove $(1)$ and $(2)$, 
we need to show that there exists an $\ell$-partition $\bmu$ such that
$$|\bmu,-\bs_\rev\rangle=\tf_{-i_p}\ldots \tf_{-i_1} \ta_{\sigma^\mathrm{tr}} \tdf_{-j_r}\ldots \tdf_{-j_1}  |{\bemp},-{\br}_\rev\rangle$$
First, note that $\mathsf{k}|\bemp,-\br_\rev\rangle= |\dot{\bemp},-\dbr_\rev\rangle$ by \cite[Formula (3.8)]{Gerber2016}.
Consider the $e$-partition $\bla_2$ such that 
$$|\bla_2,\dot{\br}\rangle=\tdf_{j_r}\ldots \tdf_{j_1}   |\dot{\bemp},\dot{\br}\rangle$$
and the $e$-partition ${\bmu_2}$ such that 
$$|{\bmu_2},-\dot{\br}_\rev \rangle=\tdf_{-j_r}\ldots \tdf_{-j_1}  |\dot{\bemp},-\dot{\br}_\rev\rangle$$
defined thanks to \Cref{FJLmul}, i.e. 
$$|\bmu_2,-\dot{\br}_\rev \rangle={\tt\Phi}_{\ell,\dbr}(|\bla_2,\dot{\br}\rangle).$$
Let $|\bla_1,\bs \rangle=\dot{\mathsf{k}} (|\bla_2,\dot{\br}\rangle)$, so that
$|\bla,\bs\rangle= \tf_{i_p}\ldots \tf_{i_1}  \ta_\si| \bla_1, \dot{\bs} \rangle$.
 Let $\bv\in\Z^\ell(-s)$ and  $\mu_1$ be the $\ell$-partition  such that   
$|\bmu_1,\bv\rangle=\dot{\mathsf{k}} (|\bmu_2,-\dot{\br}_\rev\rangle)$.  
Let us show that $\bv=-\bs_\rev$.
By hypothesis, the $\sll$-weight $\dot{w}$ of $|\bla_2,\dot{\br}\rangle$ can be written
$$\dot{w}=d\dot{\delta}+\sum_{1\leq i\leq \ell}  \dot{\Lambda}_{r_i } -\sum_{1 \leq j\leq t} \alpha_{i_j} $$
and there exists a sequence of non negative integers $(a_1,\ldots,a_\ell)$ such that 
$$\dot{w}=d\dot{\delta}+\sum_{1\leq i\leq \ell} a_i \dot{\Lambda}_{i-1}.$$
By  Proposition \ref{yv}, with these notations, we have   $\bs=\theta_{\ell,e,s}^{-1} (a_1,a_\ell,\ldots a_2)$
By hypothesis, the $\sle$-weight $\dot{w}'$ of  $|\bmu_2,-\dot{\br}_\rev\rangle$ can be written
$$\dot{w}'=d'\dot{\delta}+\sum_{1\leq i\leq \ell}  \dot{\Lambda}_{-r_i } -\sum_{1 \leq j\leq t} \dot{\alpha}_{-i_j}$$
and thus we have 
$$\dot{w}'=d'\dot{\delta}+\sum_{1\leq i\leq \ell} a_i \dot{\Lambda}_{1-i} $$
We thus have $\bv=\theta_{\ell,e,-s}^{-1} (a_1,a_2, \ldots a_\ell)$. We conclude  that 
$\bv=-\bs_\rev$ using Lemma  \ref{combij}. 
Therefore, we can set $\bmu$ to be the $\ell$-partition such that $|\bmu,-\bs_\rev\rangle = \be^{-1} (\bmu_1,\si,\bmu_2)$,
see \Cref{bijtriple}, and this concludes the proof of $(1)$ and $(2)$.

It remains to prove $(3)$. To do this,  let us study the  $\sle$-weight of $|\bla_1,\bs\rangle$ and $|\bmu_1,-\bs_\rev \rangle$. 
Again by Prop \ref{yv}, the weight $w$ of $|\bla_1,\bs \rangle$  is 
 $$w=d\delta + (e-r_1+r_e) \Lambda_0+( r_1-r_2) \Lambda_1+\ldots +  ( r_{e-1}-r_e) \Lambda_{e-1} $$
 which can be written as
  $$w=d\delta +\sum_{1\leq i\leq l} \Lambda_{t_i}-\sum_{0\leq i\leq e-1} m_i \alpha_i $$
  for a sequence of non negative  integers $(m_i)_{i=0,\ldots,e-1}$. 
Note that , with this notation,  the number $N:=\sum_{0\leq i\leq e-1} m_i$ correspond to the size  of $\bla_1$ (see Section \ref{fock}).
Now again,  the weight $w'$ of $|\bmu_1,-\bs_\rev \rangle$  is 
$$w'=d\delta + (e-r_1+r_e) \Lambda_0+( r_{e-1}-r_e) \Lambda_1+\ldots +  ( r_{1}-r_2) \Lambda_{e-1} $$
which thus can be written as:
$$w'=d\delta +\sum_{1\leq i\leq l} \Lambda_{-t_i}-\sum_{0\leq i\leq e-1} (m_{-i}) \alpha_i. $$
We conclude that $|\bla_1|=|\bmu_1|$, that is, $N:=\sum_{0\leq i\leq e-1} m_i$. It follows that $|\bmu|=|\bla|=N+|\sigma| e+r$.
\end{proof}

We may write $\Phi(\bla)$ instead of $\Phi(|\bla,\bs \rangle)$
when the charge $\bs$ is understood.

\begin{example}
Take the same values as in Example \ref{example_triple}.
Denote $\br=(-1,0,0,2)$, so that $\dbr=(-1,-1,1)$.
Then we have 
\begin{align*}
\Phi ( |\bla,\bs\rangle) 
& = 
\beta^{-1} \, 
( \,\, 
{\tt\Phi}_{e,\br} ( \tf_1\tf_0\tf_2 \, |\bemp,\br\rangle) 
\,\, , \,\, 
(2)^\mathrm{tr}
\,\, , \,\, 
{\tt\Phi}_{\ell,\dbr} ( \tdf_0\tdf_2\tdf_3\tdf_3\tdf_0\tdf_2\tdf_3\, |\dot{\bemp},\dbr\rangle )
\,\, ) 
\\
& = 
\beta^{-1} \, 
( \,\, 
 \tf_2\tf_0\tf_1 \, |(\emp,\emp,\emp,\emp),(-2,0,0,1)\rangle 
\,\, , \,\, 
(1^2)
\,\, , \,\, 
\tdf_0\tdf_2\tdf_1\tdf_1\tdf_0\tdf_2\tdf_1\, |(\emp,\emp,\emp),(-1,1,1)\rangle 
\,\, )
\\
& = 
\beta^{-1} \, 
( \,\, 
  |(\emp,1,\emp,2),(-2,0,0,1)\rangle 
\,\, , \,\, 
(1^2)
\,\, , \,\, 
|(\emp,2^2,2.1),(-1,1,1)\rangle 
\,\, )
\\
&= |(1,2.1,\emp,2^3),(-1,-1,-2,3)\rangle.
\end{align*}
\end{example}

The following corollary shows that $\Phi$ generalizes the map ${\tt\Phi}_{e,\bs}$
of Section \ref{mulAK}.

\begin{corollary}\label{mul_coincide_on_uglovs}
Let $e\geq 2$, $\bs\in\Z^\ell(s)$, and $n\geq0$.
For all $\bla\in\ug_{e,\bs}(n)$, we have 
$$\Phi(\bla) = {\tt\Phi}_{e,\bs}(\bla).$$
\end{corollary}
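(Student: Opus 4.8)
The plan is to observe that both $\Phi$ and ${\tt\Phi}_{e,\bs}$ are pinned down on the $\sle$-crystal component of $|\bemp,\bs\rangle$ by exactly the same recursion: each sends the empty multipartition to the empty multipartition (with the charge reversed, $\bs\mapsto-\bs_\rev$) and each intertwines the lowering operator $\tf_i$ with $\tf_{-i}$. Since, by the definition of Uglov multipartitions recalled in Section \ref{fock_crys}, the set $\ug_{e,\bs}$ is precisely the set of vertices lying in the connected component of the $\sle$-crystal whose unique source is $|\bemp,\bs\rangle$, every such vertex is obtained from the source by a sequence of lowering operators. This will force the two maps to agree on all of $\ug_{e,\bs}$.

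Concretely, I would first fix $\bla\in\ug_{e,\bs}(n)$ and write a path
$$|\bla,\bs\rangle = \tf_{i_p}\cdots\tf_{i_1}|\bemp,\bs\rangle$$
in the $\sle$-crystal on $\cF_{e,\bs}$, with $i_1,\ldots,i_p\in\{0,\ldots,e-1\}$. Applying $\Phi$ and invoking properties (1)(a) and (1)(d) of Theorem \ref{genmul} repeatedly — property (a) being applicable at each step because every intermediate vertex along the path is nonzero — I obtain
$$\Phi(|\bla,\bs\rangle) = \tf_{-i_p}\cdots\tf_{-i_1}\,\Phi(|\bemp,\bs\rangle) = \tf_{-i_p}\cdots\tf_{-i_1}|\bemp,-\bs_\rev\rangle.$$
On the other hand, ${\tt\Phi}_{e,\bs}$ of Theorem \ref{FJLmul} is characterized by ${\tt\Phi}_{e,\bs}(\bemp)=\bemp$ and ${\tt\Phi}_{e,\bs}\circ\tf_i=\tf_{-i}\circ{\tt\Phi}_{e,\bs}$, so applying it to the same path produces the identical expression $\tf_{-i_p}\cdots\tf_{-i_1}|\bemp,-\bs_\rev\rangle$. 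Comparing the two computations yields $\Phi(\bla)={\tt\Phi}_{e,\bs}(\bla)$, and as a byproduct confirms that $\Phi$ maps $\ug_{e,\bs}$ into $\ug_{e,-\bs_\rev}$, matching the codomain of ${\tt\Phi}_{e,\bs}$.

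The proof is short because all the substance is already in Theorem \ref{genmul}; there is no real obstacle. The only point that will need care is that property (1)(d), and hence the whole argument, is used for an \emph{arbitrary} charge $\bs\in\Z^\ell(s)$ and not merely for $\bs\in A(s)$ — this is legitimate since Theorem \ref{genmul} establishes (1)(a)--(1)(d) for the unique map $\Phi$ on all of $\Pi^\ell_s$. It is worth emphasizing that neither the Heisenberg relation (1)(b) nor the $\sll$-relation (1)(c) is needed here: Uglov multipartitions are reached from $|\bemp,\bs\rangle$ using $\sle$-operators alone, so the coincidence of $\Phi$ and ${\tt\Phi}_{e,\bs}$ is governed entirely by the single-crystal relation (1)(a).
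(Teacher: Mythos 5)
Your proposal is correct and follows essentially the same route as the paper's proof: write $|\bla,\bs\rangle=\tf_{i_p}\cdots\tf_{i_1}|\bemp,\bs\rangle$ using the definition of $\ug_{e,\bs}$, push $\Phi$ through the path via properties (1)(a) and (1)(d) of Theorem \ref{genmul}, and compare with the defining characterization of ${\tt\Phi}_{e,\bs}$ in Theorem \ref{FJLmul}. The only cosmetic difference is that the paper cites its normalization of $\Phi$ on $|\bemp,\bs\rangle$ under a different property label, while your write-up spells out explicitly (and correctly) that the Heisenberg and $\sll$-relations are never needed.
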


\begin{proof}
Let $\bs\in\Z^\ell(s)$. By Property $(3)$ of Theorem \ref{genmul}, we have
$$\Phi(|\bemp, \bs\rangle )= |\bemp, -{\bs}_\rev \rangle.$$
Now if $\bla\in\ug_{e,\bs}(n)$ there exists a sequence of Kashiwara operators such that:
$$\tf_{i_p}\ldots \tf_{i_1}|\bemp, \bs\rangle=|\bla, \bs\rangle.$$
So we can use Property $(1)$ of Theorem  \ref{genmul} to see that 
$$\Phi(\tf_{i_p}\ldots \tf_{i_1} |\bemp, \bs\rangle )=\tf_{-i_p}\ldots \tf_{-i_1} |\bemp, -{\bs}_\rev \rangle.$$
By definition of ${\tt\Phi}_{e,\bs}$ in \Cref{FJLmul}, we get the result.  
\end{proof}

\subsection{More on crystal isomorphisms}\label{ciso}
Let $P_\ell:=\mathbb{Z}^\ell$ be the $\mathbb{Z}$-module with  standard basis $\{z_i\ |\ i=1,\ldots, \ell\}$.
For $k=1,\ldots,\ell-1$, we denote by $\sigma_k$ the transposition $(k,k+1)$ of  $\mathfrak{S}_\ell$. The extended affine symmetric group 
$\widehat{\mathfrak{S}}_\ell$ is the semidirect product  $P_\ell \rtimes \mathfrak{S}_\ell$ with  the relations  given by $\sigma_i z_j=z_j \sigma_i$ for $j\neq i,i+1$ and $\sigma_i z_i \sigma_i=z_{i+1}$ for $i=1,\ldots,\ell-1$ and $j=1,\ldots,\ell$.  
It  acts faithfully on $\mathbb{Z}^\ell$   as follows: for any ${{\bs}}=(s_{1},\ldots ,s_{\ell})\in 
\mathbb{Z}^{\ell}$: 
$$\begin{array}{rcll}
\sigma _{c}.{{\bs}}&=&(s_{1},\ldots ,s_{c-1},s_{c+1},s_{c},s_{c+2},\ldots ,s_{\ell})&\text{for }c=1,\ldots,\ell-1 \text{ and }\\
z_i.{{\bs}}&=&(s_{1},s_{2},\ldots,s_i+e,\ldots ,s_{\ell})&\text{for }i=1,\ldots,\ell.
\end{array}$$
If $\bs$ and $\bs'$ are in the same orbit modulo the action of $\widehat{\mathfrak{S}}_\ell$, then there is an $\widehat{\mathfrak{sl}_e}$-crystal isomorphism $\Psi_{\bs \to \bs'}$
between the Fock spaces  $\cF_{e,\bs}$ and $\cF_{e,\bs'}$, that is a map:
$$\Psi_{\bs \to \bs'}:\Pi^\ell_\bs \to\Pi^\ell_{\bs'} $$
such that :
\begin{itemize}
\item $|\bla,\bs\rangle$ is a highest weight vertex in $\cF_{e,\bs}$ if and only if $|\Psi_{\bs \to \bs'}(\bla),\bs ' \rangle$ is a highest weight vertex in $\cF_{e,\bs'}$,
\item For all $\bla \in \Pi^\ell$, we have $\Psi_{\bs \to \bs'} (\tf_i |\bla,\bs\rangle)=\tf_i\Psi_{\bs \to \bs'} ( |\bla,\bs\rangle)$
\end{itemize}
These crystal isomorphisms have been explicity described in  \cite{JaconLecouvey2008}.
Let us now come back to our situation. 
Assume that $\bs\in \mathbb{Z}^\ell$ and choose any ${\bs '} \in \mathbb{Z}^\ell$ in the orbit of $-\bs$ modulo the action of the  extended affine symmetric group. 
 This is in particular the case for $-{\bs}_\rev$. There is a $\widehat{\mathfrak{sl}_e}$-crystal isomorphism between the Fock spaces  $\cF_{e,-\bs}$ and $\cF_{e,\bs'}$. Composing this map with $\Phi$ thus gives an isomorphism 
   between $\cF_{e,\bs}$ and $\cF_{e,\bs'}$.

\section{Combinatorics of perverse equivalences for cyclotomic Cherednik category $\cO$}\label{chered}

In Section \ref{mulAK}, we studied the Mullineux involution in the context of representations of cyclotomic Hecke algebras.
In this section, we use the results of Section \ref{section_genmul} to study the Mullineux involution in the context of representations of cyclotomic rational Cherednik algebras. 
The goal is to realize the generalized Mullineux involution as the permutation of $\Pi^\ell$ induced by certain perverse equivalences.
We follow Losev's approach \cite{Losev2015a}, \cite{Losev2015}.

\subsection{Representations of cyclotomic rational Cherednik algebras}\label{reps of cyclo RCA}

We can deform $\C[x_1,\ldots,x_n,y_1,\ldots,y_n]\rtimes\C W_{\ell,n}$ to obtain an algebra called the cyclotomic rational Cherednik algebra \cite{EtingofGinzburg2002}.
This deformation depends on parameters $\kappa\in\Q^\times$ and $\bs=(s_1,\ldots,s_\ell)\in\Q^\ell$ (the charge). We denote it $\mathsf{H}_{\ka,\bs}(n)$.
The charge $\bs$ is identified with $\bs+\alpha(1,1,\ldots,1)$ for any scalar $\alpha$, thus the parameter space is $\ell$-dimensional.
As a $\C$-vector space, $\mathsf{H}_{\ka,\bs}(n)=\C[y_1,\ldots,y_n]\otimes\C[W_{\ell,n}]\otimes\C[x_1,\ldots,x_n]$. This makes it possible to define a category $\cO$ for $\mathsf{H}_{\ka,\bs}(n)$ as the full subcategory of $\mathsf{H}_{\ka,\bs}(n)$-mod consisting of finitely generated $\mathsf{H}_{\ka,\bs}(n)$-modules which are locally nilpotent for the action of $\C[y_1,\ldots,y_n]$ \cite{GGOR2003}.
Let $\cO_{\ka,\bs}(n)$ denote the category $\cO$ of $\mathsf{H}_{\ka,\bs}(n)$, and
$\cO_{\ka,\bs}=\bigoplus_{n\in\Z_{\geq 0}} \cO_{\ka,\bs}(n)$.
 $\cO_{\ka,\bs}(n)$ is a highest weight category whose simple objects 
are indexed by $\Irr_\C W_{\ell,n}$, the irreducible representations of the underlying group algebra $\C W_{\ell,n}$ \cite{GGOR2003}, thus by
$\Pi^\ell(n)$.
Furthermore, the highest weight structure of $\cO_{\ka,\bs}(n)$ depends on a partial order on $\Pi^\ell(n)$ which is determined by the parameter $(\kappa,\bs)$.

\subsection{Branching rules and crystals}
 In this subsection  we restrict to the case of \textit{integral parameters}, that is
$$  \ka = \pm 1/e \text{ for some }e\in\Z_{\geq 2} \text{\quad and \quad } \bs\in\Z^\ell.$$
The (complexified) Grothendieck group of $\cO_{\ka,\bs}$ is a level $\ell$ Fock space.
Recall from Section \ref{fock} that the parameters $e,\bs$ for the Fock space
come from its $\Ue$-module structure.
Shan \cite{Shan2011} has shown that if $\ka=1/e$, 
there is a notion of branching rule arising from Bezrukavnikov and Etingof's parabolic induction functors \cite{BezrukavnikovEtingof},
which categorifies the $\sle$-crystal of the Fock space $\cF_{e,\bs}$
\cite[Theorem 6.3]{Shan2011}.
Moreover, there is a categorical Heisenberg action on $\cO_{\kappa,\bs}$ giving rise to the $\slinf$-crystal on $\cF_{e,\bs}$ \cite{ShanVasserot},
\cite{Losev2015}. 
We have:
\begin{theorem}(Shan-Vasserot \cite[Proposition 5.18]{ShanVasserot})\label{crystalcusp}
The following are equivalent: 
\begin{enumerate}
\item $\bla$ is a highest weight vertex for both the $\sle$- and $\slinf$-crystals on $\cF_{e,\bs}$,
\item $L(\bla)$ is killed by the categorical Heisenberg and $\sle$ annihilation operators,
\item $L(\bla)$ is finite-dimensional.
\end{enumerate}
\end{theorem}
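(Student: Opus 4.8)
The plan is to prove this as a three-way equivalence, and the natural cycle to establish is $(3)\Rightarrow(1)\Rightarrow(2)\Rightarrow(3)$, although in practice the cleanest route is probably to show that each of $(1)$, $(2)$, $(3)$ is equivalent to the statement that $L(\bla)$ is a \emph{cuspidal} module in the sense that both the $\sle$- and Heisenberg categorical annihilation functors kill it. The essential input is the \emph{categorical} meaning of each crystal: the $\sle$-crystal on $\cF_{e,\bs}$ is categorified by the branching (parabolic restriction) functors of Shan \cite[Theorem 6.3]{Shan2011}, and the $\slinf$-crystal is categorified by the Heisenberg action of \cite{ShanVasserot}, \cite{Losev2015}. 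Under these categorifications, a vertex $\bla$ is a highest weight vertex for the $\sle$-crystal precisely when the $\sle$-crystal lowering operators (equivalently, the top of the appropriate restriction functor applied to $L(\bla)$) vanish, and similarly for the Heisenberg crystal.

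\textbf{Equivalence of $(1)$ and $(2)$.} This is essentially a translation between the combinatorial crystal statement and its categorical source. The key point is that the Kashiwara operators $\tf_i$ of the $\sle$-crystal and the Heisenberg operators $\ta_\sigma$ are read off from the \emph{socle/head} of the corresponding categorical functors applied to the simple module $L(\bla)$. A vertex is a highest weight vertex for a given crystal if and only if every lowering operator sends it to nothing, which categorifies to the statement that the corresponding annihilation functor (the $\sle$ annihilation operator, respectively the categorical Heisenberg annihilation operator) kills $L(\bla)$. So I would argue that $\bla$ is simultaneously $\sle$- and $\slinf$-highest weight if and only if both families of annihilation functors vanish on $L(\bla)$, which is exactly condition $(2)$. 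The content here is really just matching the crystal-theoretic vanishing of operators to the vanishing of the functors that categorify them, together with the fact that these functors detect the simple $L(\bla)$ on the level of $K$-theory.

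\textbf{Equivalence with $(3)$.} The substantive part is linking finite-dimensionality to the cuspidality condition $(2)$. The plan is to use the geometric/representation-theoretic characterization of the support of modules in category $\cO$ for cyclotomic Cherednik algebras. The support of $L(\bla)$ is a union of symplectic leaves, and one direction is standard: parabolic restriction $\Res$ computes the support, so if $L(\bla)$ has nontrivial support then some restriction functor is nonzero on it, which via the categorification of the $\sle$- and Heisenberg crystals means some lowering operator is nonzero, contradicting highest-weight-ness for one of the two crystals. Conversely, if $L(\bla)$ is finite-dimensional then it is annihilated by all parabolic restrictions to proper parabolics, and hence by both the $\sle$ and Heisenberg annihilation operators, giving $(2)\Rightarrow(3)$ and its converse. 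Concretely, the finite-dimensional simples in cyclotomic category $\cO$ are exactly those whose support is $\{0\}$, and the support is detected precisely by the two crystal structures since together they account for all the relevant restriction functors.

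The main obstacle, and the place requiring the most care, will be showing that the $\sle$-crystal and the Heisenberg crystal \emph{together} detect the full support, i.e. that vanishing of \emph{both} families of annihilation operators is enough to force finite-dimensionality. This amounts to verifying that the parabolic restriction functors controlling the support are exhausted by the functors categorifying these two crystals, so that no ``extra'' direction of support can survive. I would address this by invoking the theory of supports in cyclotomic category $\cO$ and the precise form of the categorified branching rules from \cite{Shan2011} and \cite{ShanVasserot}: the leaves in the support correspond to parabolic subgroups of type $W_{\ell,m}\times\fS_1^{\,n-m}$ (controlled by the $\sle$-restriction) and of type $\fS_k$-cusps (controlled by the Heisenberg restriction), and these two families generate all the restrictions that can decrease the support. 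Since all three conditions are stated as equivalent in the source \cite[Proposition 5.18]{ShanVasserot}, I would ultimately cite that proposition for the precise support computation while presenting the crystal-theoretic reformulation above.
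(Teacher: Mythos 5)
The paper offers no proof of this statement at all: it is imported verbatim from Shan--Vasserot, with \cite[Proposition 5.18]{ShanVasserot} cited as the source, and your proposal likewise ends by deferring to that same proposition for the substantive support computation, so the two are in essentially the same position. Your surrounding sketch --- the $\sle$-crystal categorified by Shan's branching functors, the $\slinf$-crystal by the Heisenberg action, and finite-dimensionality detected through supports and Bezrukavnikov--Etingof parabolic restriction, with the genuine content being that these two families of functors exhaust what controls the support --- is a faithful outline of how the cited proof actually runs. One terminological slip: a highest weight vertex is one killed by the \emph{raising} operators $\te_i$ (these are what the annihilation/restriction functors categorify), not the lowering operators $\tf_i$, which act nontrivially on any highest weight vertex; your parenthetical referring to the restriction functors shows you intend the correct statement.
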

Thus finite-dimensional simples are labeled by the source vertices of the $\sle$- and $\slinf$-crystals.

\subsection{Perverse equivalences}\label{subsection:perverse} Perverse equivalences are a special kind of derived equivalence introduced by Chuang-Rouquier \cite{ChuangRouquier2008},\cite{ChuangRouquier2017} which are well-suited for combinatorial applications. 
Let $\cA,\cA'$ be abelian categories with finitely many simple objects and in which every object has finite length. Let $S,S'$ be the sets of isomorphism classes of simple objects of $\cA,\cA'$ respectively. Let $0\subset S_0\subset S_1\subset\ldots\subset S_r=S$ be a filtration of $S$ and $0\subset S_0'\subset S_1'\subset\ldots\subset S_r'=S'$ a filtration of $S'$.
Let $\cA_i\subset\cA,\cA_i'\subset \cA'$ be the Serre subcategories generated by $S_i,S_i'$ respectively and let $\pi:\{0,1,\ldots,r\}\rightarrow\Z$ be a function. 
\begin{definition}[Chuang-Rouquier]\label{def:perverse}
A derived equivalence $F:D^b(\cA)\rightarrow D^b(\cA')$ is \textit{perverse} if for all $i\geq 0$ and all $s\in S_i\setminus S_{i-1}$, the complex $F(s)$ satisfies:
\begin{enumerate}
 \item for $j\neq \pi(i)$ all composition factors of $H^j(F(s))$ are in $S_{i-1}'$,
 \item all the composition factors of $H^{\pi(i)}(F(s))$ are in $S_{i-1}'$ except for a unique one in $S_i'\setminus S_{i-1}'$.
 \end{enumerate}
 \end{definition}
 A perverse equivalence $F:D^b(\cA)\rightarrow D^b(\cA')$ therefore gives rise to a canonical bijection $\mathfrak{f}:S\rightarrow S'$ 
 sending $s\in S_i\setminus S_{i-1}$ to $\mathfrak{f}(s):=H^{\pi(i)}(F(s))\mod \cA'_{i-1}$. In the following sections \ref{subsection:wcfunctors} and \ref{subsection:Ringel}, we show how the generalized Mullineux involution arises from the perverse equivalences given by wall-crossing functors and  Ringel duality.

\subsection{Wall-crossing functors}\label{subsection:wcfunctors} Let us recall Losev's construction of wall-crossing functors in \cite{Losev2015}. These are derived equivalences between category $\cO_{?,\dagger}(n)$'s for parameters which differ by a perturbation of the partial order on $\Pi^\ell(n)$.
Denote by $\fc_\Z\subset \Q^\times \times \Q^\ell$ the $\ell$-dimensional lattice in the parameter space $\C^\ell$ consisting of those parameters $c'=(\ka',\bs')$ that have
\textit{integral difference} with a fixed parameter $c=(\ka,\bs)$, i.e. such that 
$\ka'-\ka\in\Z$ and $\ka' (s'_i-s_j ') -\ka (s_i-s_j)\in\Z$ for all $1\leq i<j \leq \ell$.
Note that replacing the $\ell$-tuple $\bs=(s_1,\ldots,s_\ell)$ with $\bs+(\alpha,\ldots,\alpha)$ for $\alpha\in\Z$ does not affect the definition of the corresponding rational Cherednik algebra, see the formulas in \cite[\S 3.3, Theorem 6.10]{ShanVasserot}.
The lattice $\fc_\Z$ is the shift by $c$ of the dual lattice to the lattice spanned by certain hyperplane elements, see \cite[Section 2.1.4, Section 2.7, Section 4.1.3]{Losev2015}. It is isomorphic to $\Z^\ell$.
There is a finite set of hyperplanes in $\C^\ell$ called  walls 
dividing
$\fc_\Z$ into open cones called chambers. These hyperplanes are defined as follows. For each $\bla=((\la_1^1,\la_{2}^1,\ldots),\ldots (\la_1^\ell,\la_{2}^\ell,\ldots))\in \Pi^{\ell} (n)$, let 
$$[\bla]:=\{ (a,b,j)\ |\ 1\leq a,\ 1\leq b\leq \lambda^j_a,\   1\le j \leq \ell\}$$
be the Young diagram of $\bla$. For each $(a,b,j)\in [\bla]$, we define
$$ co_c (\gamma)=\kappa \ell(b-a)+\ell h_j$$
where $h_j=\kappa s_j-\frac{j}{\ell}$
and:
$$c_{\bla}:=\sum_{\gamma \in [\bla]} co_c(\gamma) $$
called the $c$-function; the formula in our cyclotomic case was given in \cite{GordonLosev}. By definition, the walls are the hyperplanes 
$\Pi_{\bla,\bla'}$ given by $c_{\bla}=c_{\bla'}$. 
Among these walls, we will be interested in the so called 
 ``essential walls'' which are the following ones:
\begin{itemize}
 \item[(a)] the wall $\ka=0$ between chambers containing parameters $(\kappa,\bs)$ such that the denominator $e$ of $\kappa$ satisfies $2\leq e\leq n$,  
 in terms of the above definition, it is of the form $\Pi_{\bla,\bla'}$ for some $[\bla]=[\bmu]\sqcup\{\gamma\}$
  and $[\bla']=[\bmu]\sqcup\{\gamma'\}$, for some multipartition $\bmu$ and two boxes $\gamma=(a,b,j)$ and $\gamma'=(a',b',j')$ of $\bmu$ with $j=j'$. 
 \item[(b)] the walls $h_i-h_j=\ka m$ with $i\neq j$, $m\in\Z$ and $|m|<n$ 
 between chambers containing  parameters such that $s_i-s_j-m\in\kappa^{-1}\Z$. In terms of the above definition they are of the form $\Pi_{\bla,\bla'}$ where $[\bla]=[\bmu]\sqcup\{\gamma\}$
  and $[\bla']=[\bmu]\sqcup\{\gamma'\}$ for a multipartition $\bmu$  and two boxes $\gamma=(a,b,i)$ and $\gamma'=(a',b',j)$. 
\end{itemize}

Two categories whose parameters lie in the same chamber are equivalent as highest weight categories \cite[Proposition 2.8]{Losev2015}.
On the other hand, the bounded derived categories of $\cO_{c}(n)$ and $\cO_{c'}(n)$ are derived equivalent
when $c:=(\ka,\bs)$ is obtained from $c':=(\ka',\bs')$ by crossing a wall to an adjacent chamber.
 Two chambers are separated by the  wall $\Pi_{\bla,\bla'}$ if and only if the sign of $c_{\bla}-c_{\bla'}$ in a chamber is opposite to $c_{\bla}-c_{\bla'}$ in the adjacent one.
 The derived equivalences $\mathsf{WC}_{c\leftarrow c'}:D^b(\cO_c)\rightarrow D^b(\cO_{c'})$ are called wall-crossing functors. They are defined by taking the derived tensor product with a Harish-Chandra bimodule, see \cite[Section 2.8]{Losev2015} or \cite{Losev2017} for the construction of these functors.

Losev proved that $\mathsf{WC}_{c\leftarrow c'}:D^b(\cO_c)\rightarrow D^b(\cO_{c'})$ is a perverse equivalence with respect to the filtration of simple modules by their supports (the function $\pi$ then picks out the dimension of the support) \cite[Proposition 2.12]{Losev2015}. Therefore $\mathsf{WC}_{c\leftarrow c'}:D^b(\cO_c)\rightarrow D^b(\cO_{c'})$ induces a canonical bijection $\mathsf{wc}_{c\leftarrow c'}$ on $\Pi^\ell$, called the combinatorial wall-crossing. 
For walls of type (b), the combinatorial wall-crossings have been studied in \cite{Losev2015}, \cite{JaconLecouvey2018}. 
In particular, they are given by the crystal isomorphisms of Section \ref{ciso} for the appropriate parameters \cite[Theorem 11]{JaconLecouvey2018}. 
We are here interested in the walls of type (a). First we need to see for which types of parameters they are defined. 
We will denote by $\wc$ the combinatorial wall-crossing from $\kappa>0$ to $\kappa'<0$ corresponding to the wall of type (a).

For the next proposition, we will use the following notation. For  $c:=(\ka,\bs)$ and $\pi\in \mathfrak{S}_\ell$,  we say that $c$ is
 \textit{$\pi$-asymptotic}  if we have for all $i=1,\ldots,\ell$:
$$\left(s_{\pi (i)} - \frac{\pi (i)}{\kappa \ell}\right)  -\left(s_{\pi (i+1)}- \frac{\pi (i+1)}{\kappa \ell}\right) >n-1.$$
Note that this is slightly more general than Losev's definition \cite{Losev2015},
and that both agree for integral parameters.
We say that $c$ is \textit{asymptotic} if it is $\pi$-asymptotic for some $\pi$.
\begin{proposition}\label{prop_wc}
\begin{enumerate}
\item Let $c=(\kappa,\bs)$ and $c'=(\kappa',\bs')$ in $\mathfrak{c}_{\mathbb{Z}}$ such that $\kappa$ and $\kappa'$ have the same sign and such that 
 $c$ and $c'$ are both  $\pi$-asymptotic  for some $\pi\in \mathfrak{S}_\ell$. Then there are no essential  walls between $c$ and $c'$.
\item 
Assume that $c:=(\ka,\bs)$  and $c':=(\ka-1,\bs')$  are two parameters with integral difference, with $\kappa>0$ and $\kappa':=\kappa-1<0$ lying  in two different chambers separated by a unique wall of 
type (a) (and no wall of type (b)). Then there exists $\pi\in \mathfrak{S}_\ell$ such that 
$c$ is $\pi$-asymptotic and $c'$ is $\pi'$-asymptotic where $\pi ' \in\fS_\ell$ is defined by 
$\pi'(i) = \pi(\ell-i+1)$ for all $i=1,\ldots,\ell$.
 Conversely, if $c$ and $c'$ are as above, they are separated by a unique wall of type (a) and no wall of type (b). 
 \end{enumerate}
\end{proposition}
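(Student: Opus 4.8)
The plan is to pass to the coordinates $(\kappa, h_1, \ldots, h_\ell)$ on parameter space, where $h_j = \kappa s_j - j/\ell$ as in the definition of the $c$-function. In these coordinates the type (a) wall is $\kappa = 0$, and for a type (b) wall coming from boxes $\gamma = (a,b,i)$, $\gamma' = (a',b',j)$ one computes
$$c_{\bla} - c_{\bla'} = co_c(\gamma) - co_c(\gamma') = \ell\bigl(\kappa(s_i - s_j - m) - \tfrac{i-j}{\ell}\bigr) = \ell\kappa\,(\tilde h_i - \tilde h_j - m),$$
where $m = (b'-a')-(b-a)$ satisfies $|m| < n$ (this is part of definition (b)) and $\tilde h_j := h_j/\kappa = s_j - j/(\kappa\ell)$. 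Thus every essential wall is a \emph{central} hyperplane ($\kappa = 0$, or $h_i - h_j = \kappa m$), so the essential-wall arrangement is central, its chambers are open cones, and the sign of each wall functional is constant on each chamber. I also record the reformulation that $c$ is $\pi$-asymptotic exactly when $\tilde h_{\pi(i)} - \tilde h_{\pi(i+1)} > n-1$ for all $i$; taking $\pi$ to be the permutation sorting the $\tilde h_j$ in decreasing order, this is equivalent to $|\tilde h_i - \tilde h_j| > n-1$ for all $i \neq j$.

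For the converse in $(2)$, assume $c$ is $\pi$-asymptotic and $c'$ is $\pi'$-asymptotic with $\pi'(i) = \pi(\ell - i + 1)$. Fix a type (b) wall with data $(i,j,m)$, $|m| < n$, and evaluate the wall functional $L = c_{\bla} - c_{\bla'}$ at the two points: $L(c) = \ell\kappa(\tilde h^c_i - \tilde h^c_j - m)$ and $L(c') = \ell\kappa'(\tilde h^{c'}_i - \tilde h^{c'}_j - m)$. Since $|m| \le n-1$ is dominated by the asymptotic gaps, $\operatorname{sign}(\tilde h^c_i - \tilde h^c_j - m) = \operatorname{sign}(\tilde h^c_i - \tilde h^c_j)$ and likewise for $c'$; and because $\pi'$ reverses the $\pi$-order, these two signs are opposite. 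As $\kappa\kappa' < 0$, we get $L(c)L(c') > 0$, so $c$ and $c'$ lie on the same side of this wall. Hence no type (b) wall separates them, while $\kappa = 0$ does (since $\kappa > 0 > \kappa'$); being the only type (a) hyperplane, it is the unique separating wall.

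For the forward direction, let $C, C'$ be the chambers containing $c, c'$. The hypothesis that $\kappa = 0$ is the only essential wall separating them means $C$ and $C'$ are adjacent and share a facet $F \subset \{\kappa = 0\}$. Restricting the type (b) walls $h_i - h_j = \kappa m$ to $\{\kappa = 0\}$ gives the braid arrangement $\{h_i = h_j\}$, independently of $m$; since $\operatorname{relint}(F)$ meets no further essential wall, the $h_1, \ldots, h_\ell$ are pairwise distinct there and define a permutation $\pi$ by $h_{\pi(1)} > \cdots > h_{\pi(\ell)}$. Approaching $\operatorname{relint}(F)$ from within $C$ forces $\kappa \to 0^+$, whence $\tilde h_{\pi(k)} - \tilde h_{\pi(k+1)} = (h_{\pi(k)} - h_{\pi(k+1)})/\kappa \to +\infty$; as the sign of the wall functional indexed by $(\pi(k), \pi(k+1), n-1)$ is constant on the chamber $C$ and positive near $F$, we conclude $\tilde h_{\pi(k)} - \tilde h_{\pi(k+1)} > n-1$ throughout $C$, i.e. $c$ is $\pi$-asymptotic. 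Approaching the \emph{same} facet $F$ from within $C'$ forces $\kappa' \to 0^-$, so dividing by a negative number reverses the order, and the identical sign-constancy argument shows $c'$ is $\pi'$-asymptotic with $\pi'$ the reversal of $\pi$.

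The main obstacle is this forward direction: extracting the quantitative gap inequalities (gaps $> n-1$) from the purely qualitative hypothesis that ``only $\kappa = 0$ separates $c$ and $c'$''. The two ideas that unlock it are that passing to $(\kappa, h)$-coordinates makes the arrangement central — so asymptoticity becomes a cone condition detected by the constant signs of the wall functionals — and that the restriction of the type (b) walls to $\kappa = 0$ is exactly the braid arrangement, so that sharing a facet across $\kappa = 0$ already pins down the total order $\pi$. The relation $\kappa' = \kappa - 1$ and the integral-difference hypothesis serve only to place $c$ and $c'$ in a common central arrangement and to guarantee that the wall actually crossed is $\kappa = 0$; the geometric argument itself uses only $\kappa > 0 > \kappa'$.
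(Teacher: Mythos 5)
Your write-up proves only part (2); part (1) is never addressed, so as it stands the proposal does not cover the full statement. This is an omission rather than a missing idea, because the sign computation in your converse paragraph gives (1) verbatim: if $c$ and $c'$ are both $\pi$-asymptotic for the \emph{same} $\pi$ and $\kappa,\kappa'$ have the \emph{same} sign, then for any type (b) datum $(i,j,m)$ with $|m|\le n-1$ the signs of $\tilde{h}^c_i-\tilde{h}^c_j-m$ and $\tilde{h}^{c'}_i-\tilde{h}^{c'}_j-m$ coincide (both are dominated by gaps $>n-1$ pointing in the same direction), and the signs of $\kappa,\kappa'$ coincide by hypothesis, so each type (b) wall functional takes the same sign at $c$ and at $c'$; the type (a) functional $\kappa$ also keeps its sign. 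Hence no essential wall separates $c$ from $c'$. This two-sentence addition is exactly the paper's proof of (1), stated there in box language via the inequality $b_1-a_1-(b_2-a_2)\ge 1-n$ for boxes of a multipartition of $n$.

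For part (2) your argument is correct, and the forward direction is genuinely different from the paper's. The paper never leaves the definition of walls: for each pair $(i_1,i_2)$ it tests the hypothesis on explicit pairs of multipartitions of $n$ (the row $(n-1)$, resp.\ the column $(1^{n-1})$, placed in component $i_1$, completed either by the box prolonging it or by the box $(1,1,i_2)$), and the assumption that no type (b) functional changes sign between $c$ and $c'$ directly produces, for each pair of components, one of two dichotomies of gap inequalities, from which $\pi$ and its reversal are read off. Your proof instead runs through the geometry of the essential arrangement in $(\kappa,h)$-coordinates: a unique separating hyperplane forces the two chambers to share a facet $F\subset\{\kappa=0\}$, the braid arrangement on $\{\kappa=0\}$ makes $F$ determine the ordering $\pi$ of the $h_i$, and sign constancy plus a limit toward $F$ converts this qualitative datum into the quantitative gaps $>n-1$. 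That is a genuinely more conceptual explanation of where $\pi$ and its reversal come from, but you use two facts without justification which should be spelled out: (i) ``exactly one separating wall implies a common facet'' needs the standard segment argument (the segment $[c,c']$ stays strictly on one side of every non-separating wall, hence meets the union of the essential walls in a single point $z\in\{\kappa=0\}$ lying on no other wall, and $z$ lies in the closure of both chambers); (ii) sign constancy on a chamber is valid only for hyperplanes belonging to the arrangement that cuts out the chambers, so you must observe that $h_{\pi(k)}-h_{\pi(k+1)}=\pm\kappa(n-1)$ is indeed a type (b) wall, i.e.\ of the form $\Pi_{\bla,\bla'}$ --- it is, being realized precisely by the paper's row/column test multipartitions. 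Your converse direction is, up to phrasing via wall functionals, the same computation as the paper's.
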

\begin{proof}
Let  $(i_1,i_2)\in \{1,\ldots,\ell\}^2$ be such that $i_1\neq i_2$ and assume that 
$$\left(s_{i_1} - \frac{i_1}{\kappa \ell}\right)  -\left(s_{i_2}- \frac{i_2}{\kappa \ell}\right) >n-1.$$
Let $\bla$ be an arbitrary $\ell$-partition of $n$ and let  $\gamma_1 =(a_1,b_1,i_1)$ and 
$\gamma_2 =(a_2,b_2,i_2)$ be two boxes of the Young diagram.  By \cite[Example 5.5.18]{GeckJacon2011}, we have:
$$b_1-a_1-(b_2-a_2)\geq 1-n.$$
Thus we deduce
$$b_1-a_1+s_{i_1} - \frac{i_1}{\kappa \ell} >b_2-a_2+s_{i_2} - \frac{i_2}{\kappa \ell}$$
which implies that  $co_c (\gamma_1)>co_c (\gamma_2)$. The same calculation holds for $c'$ and we get $co_{c'}(\gamma_1)>co_{c'}(\gamma_2)$. Thus the order on boxes induced by the $c$-function is the same if we choose it with respect to $c$ or to $c'$, and therefore there is no wall of type (b) between $c$ and $c'$. Thus we cannot have any walls of type $\Pi_{\bla,\bla'}$ between the two parameters. 
This directly implies $(1)$.

Now we prove $(2)$. Assume thus that $c:=(\ka,\bs)$  and $c':=(\ka-1,\bs')$  have integral difference, with $\kappa>0$ and $\kappa':=\kappa-1<0$ lying  in two different chambers separated by a unique wall of 
type (a) (and no wall of type (b)).  Let    $(i_1,i_2)\in \{1,\ldots,\ell\}^2$ be such that $i_1\neq i_2$. Consider the $\ell$-partition  $\bla$ of $n-1$ such that 
 $\lambda^k=\emptyset$ for all $k\in \{1,\ldots,\ell\}\setminus \{i_1\}$ and  $\lambda^{i_1}=(n-1)$. We then consider the $\ell$-partition 
  $\bmu$ of $n$ obtained from  $\bla$ by adding $\gamma_1:=(1,n,i_1)$ and the $\ell$-partition 
  $\bnu$ of $n$ obtained from  $\bla$ by adding $\gamma_2:=(1,1,i_2)$. 
   If $c:=(\ka,\bs)$ does not lie in an essential wall, we have two cases to consider:
   \begin{itemize}
   \item If $c_{{\bmu}}<c_{\bnu}$ then we have $co_c (\gamma_1)<co_c (\gamma_2)$. We thus obtain:
   $$\kappa \ell \left(n-1+s_{i_1}-\frac{i_1}{\kappa \ell}\right) < \kappa \ell \left(s_{i_2}-\frac{i_2}{\kappa \ell}\right)$$
   which implies that:
   $$\left(s_{i_2} - \frac{i_2}{\kappa \ell}\right)  -\left(s_{i_1}- \frac{i_1}{\kappa \ell}\right) >n-1.$$
   Now  consider the $\ell$-partition $\bla'$ of $n-1$ such that 
 ${\lambda'}^k=\emptyset$ for all $k\in \{1,\ldots,\ell\}\setminus \{i_1\}$ and  ${\lambda'}^{i_1}=(1^{n-1})$. We then consider the $\ell$-partition 
  $\bmu'$ of $n$ obtained from  $\bla$ by adding $\gamma_1:=(n,1,i_1)$ and the $\ell$-partition 
  $\bnu'$ of $n$ obtained from  $\bla$ by adding $\gamma_2:=(1,1,i_2)$. We must have:
    $$\kappa \ell \left(1-n+s_{i_1}-\frac{i_1}{\kappa \ell}\right) < \kappa \ell \left(s_{i_2}-\frac{i_2}{\kappa \ell}\right)$$
but now as no walls of type (b) must be crossed to go to $c'$, we must also have:
    $$\kappa' \ell \left(1-n+s_{i_1}'-\frac{i_1}{\kappa \ell}\right) < \kappa' \ell \left(s_{i_2}'-\frac{i_2}{\kappa \ell}\right).$$
This implies that
  $$\left(s_{i_1} ' - \frac{i_1}{\kappa' \ell}\right)  -\left(s_{i_2}'- \frac{i_2}{\kappa '\ell}\right) >n-1$$
because $\kappa'$ is negative. 
\item If $c_{\bmu}>c_{\bnu}$, we now have  $co_c (\gamma_1)>co_c (\gamma_2)$ and 
   $$\kappa \ell \left(n-1+s_{i_1}-\frac{i_1}{\kappa \ell}\right) > \kappa \ell \left(s_{i_2}-\frac{i_2}{\kappa \ell}\right).$$
This implies that 
  $$\left(s_{i_2}'  - \frac{i_2}{\kappa' \ell}\right)  -\left(s_{i_1}'- \frac{i_1}{\kappa '\ell}\right) >n-1$$
and by considering the same $\bla'$  as above we now obtain 
  $$\left(s_{i_1}  - \frac{i_1}{\kappa' \ell}\right)  -\left(s_{i_2}- \frac{i_2}{\kappa '\ell}\right) >n-1.$$
\end{itemize}
We can thus conclude that $c$ is $\pi$-asymptotic and $c'$ is $\pi'$-asymptotic  for a certain $\pi\in \mathfrak{S}_\ell$. Now let us  take such 
 a pair $(c,c')$ and let $(i_1,i_2)\in \{1,\ldots,\ell\}^2$ such  that  $i_1\neq i_2$ and 
$$\left(s_{i_1} - \frac{i_1}{\kappa \ell}\right)  -\left(s_{i_2}- \frac{i_2}{\kappa \ell}\right) >n-1$$
 so that 
$$\left(s_{i_2}' - \frac{i_2}{\kappa' \ell}\right)  -\left(s_{i_1}'- \frac{i_1}{\kappa' \ell}\right) >n-1.$$
 We have already seen that for any $\ell$-partition $\bla$, and $\gamma_1 =(a_1,b_1,i_1)$ and 
$\gamma_2 =(a_2,b_2,i_2)$  two boxes of the Young diagram, we have $co_c (\gamma_1)>co_c (\gamma_2)$. But, the hypothesis also shows that $co_{c'} (\gamma_1)>co_{c'} (\gamma_2)$. This implies that no wall of type (b) is crossed between $c$ and $c'$. If we now take an arbitrary $l$-partition of $n-1$ and 
 consider two addable boxes $\gamma_1$ and $\gamma_2$ in the same component such that 
$co_c(\gamma_1)<co_c(\gamma_2)$, then we have $co_{c'}(\gamma_1)>co_{c'}(\gamma_2)$. This implies that a wall of type $(a)$ is crossed. This concludes the proof. 
\end{proof}

Assume that ${\bf s}$ is  $\pi$-asymptotic  for some $\pi\in\fS_\ell$ and assume moreover that we are in the integral parameter case.
We denote $\bs_{\mathrm{opp}}:=\bs'$  an associated  multicharge  which is $\pi'$-asymptotic with respect to the notation of the proposition. 
This is a slight abuse of notation because $\bs'$ is  of course not unique in general. However, two $\pi$-asymptotic parameters lie in the same chamber.  Thus, the associated categories are equivalent as highest weight categories, and we can identify these two parameters. 
 Moreover, note that one can assume that $\bs'$ and $\bs$ lives in the same orbit modulo the action of the affine symmetric group. 
  Indeed, if we denote ${\bf s}:=(s_1,\ldots,s_\ell)$. One can choose  $(k_1,\ldots,k_\ell)\in \mathbb{Z}^\ell$ so that 
   ${\bf s}'':=(s_1+k_1 e,\ldots, s_\ell+k_\ell e)$ is $\pi'$-asymptotic. In this case,  $c''=(\kappa',\bs'')$ and $c$ are in $\mathfrak{c}_{\mathbb{Z}}$ and again $c''$ and $c'$ lie in the same chamber.

\begin{proposition}\cite[Proposition 5.6]{Losev2015}\label{wccomm1} \
\begin{enumerate}
\item The $\sle$-crystal commutes with $\wc$.
\item The $\slinf$-crystal commutes with $\wc$ up to taking the transpose, i.e. 
$\wc \circ \ta_\si = \ta_{\si^\mathrm{tr}} \circ \wc$ for all $\si\in\Pi$.
\end{enumerate}
\end{proposition}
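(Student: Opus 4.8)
The plan is to deduce both statements from the compatibility of the wall-crossing functor $\WC$ with the two categorical actions on $\cO_{\ka,\bs}$ that give rise to the $\sle$- and $\slinf$-crystals, and then to extract the combinatorial assertions from the perverse structure recalled in Section \ref{subsection:perverse}. Recall that $\WC$ is obtained by deriving the tensor product with a Harish-Chandra bimodule. The essential input is that such bimodule functors commute, up to isomorphism of functors, with the Bezrukavnikov--Etingof parabolic induction and restriction functors out of which both Shan's categorification of the $\sle$-action and the Shan--Vasserot categorical Heisenberg action are built. Once this intertwining is in place, the combinatorial bijection $\wc$, read off from $\WC$ via the support filtration, will respect the crystal operators up to the combinatorial effect of the sign reversal $\ka\mapsto\ka'=\ka-1<0$.

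For part (1), I would argue that $\WC$ intertwines the categorical $\sle$-action for $\ka>0$ with that for $\ka'<0$, so that it sends $\sle$-highest weight vertices to $\sle$-highest weight vertices and commutes with the lowering operators on the Grothendieck group. The $\sle$-action changes the dimension of support in a controlled way, so it is compatible with the filtration by support for which $\WC$ is perverse (Proposition \ref{prop_wc} ensures we are crossing only the type (a) wall). Consequently the bijection $s\mapsto H^{\pi(i)}(\WC(s))$ taken modulo the appropriate Serre subcategory commutes with $\tf_i$, which is the identity $\wc\circ\tf_i=\tf_i\circ\wc$.

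For part (2), the same intertwining argument shows that $\wc$ commutes with the Heisenberg crystal operators up to the effect of the sign change on the underlying combinatorics. Since the $\slinf$-crystal components are branching graphs indexed by $\Pi$ via $\si\mapsto\ta_\si$ applied to a highest weight vertex, and since highest weight vertices for both crystals label finite-dimensional simples by Theorem \ref{crystalcusp}, it suffices to understand how $\WC$ transports the generator $\ta_{(1)}$ and then to propagate using the commutation of the three crystals (Theorem \ref{3crystals}). Crossing the $\ka=0$ wall reverses the orientation governing the Heisenberg operators, exchanging the two conventions on the $\slinf$-Fock space, and this exchange is precisely the transpose $\si\mapsto\si^{\mathrm{tr}}$ on the indexing partitions, giving $\wc\circ\ta_\si=\ta_{\si^{\mathrm{tr}}}\circ\wc$.

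The step I expect to be the main obstacle is making the transpose in (2) precise: one must identify the two categorical Heisenberg actions (for $\ka>0$ and for $\ka<0$) explicitly enough to see that the intertwiner provided by $\WC$ implements $\si\mapsto\si^{\mathrm{tr}}$ rather than some other involution of $\Pi$. This is exactly where the sign of $\ka$ enters the Heisenberg combinatorics, and it is the heart of Losev's argument in \cite[Proposition 5.6]{Losev2015}; the $\sle$-part (1), by contrast, is insensitive to the sign and follows formally from the intertwining.
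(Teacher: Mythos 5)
First, a point of comparison: the paper gives no proof of this statement --- Proposition \ref{wccomm1} is imported verbatim from \cite[Proposition 5.6]{Losev2015} --- so your attempt can only be measured against Losev's argument. For part (1) your outline matches that argument: $\WC$, being the derived tensor product with a Harish-Chandra bimodule, intertwines the Bezrukavnikov--Etingof parabolic induction and restriction functors (\cite[Proposition 3.8]{Losev2015}, the same input the present paper invokes in the proof of Lemma \ref{lem_sleRingel}), and since $\WC$ is perverse with respect to the filtration by supports, the induced bijection on simples commutes with the $\sle$-crystal operators defined through those functors. This is only a sketch, but the essential ingredients are the right ones.

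Part (2) contains a genuine gap, located exactly where all the content of the statement lies. You assert that crossing the $\ka=0$ wall ``reverses the orientation governing the Heisenberg operators'' and that this reversal ``is precisely the transpose $\si\mapsto\si^{\mathrm{tr}}$'', but you give no argument for it, and the reduction you propose cannot supply one: you claim it suffices to understand how $\WC$ transports the generator $\ta_{(1)}$ and then to propagate. Even granting that $\wc$ is an isomorphism of $\slinf$-crystals, so that (as in the dichotomy the paper itself uses in Lemma \ref{twistedslinf}) the only possibilities are $\wc\circ\ta_\si=\ta_\si\circ\wc$ for all $\si$ or $\wc\circ\ta_\si=\ta_{\si^{\mathrm{tr}}}\circ\wc$ for all $\si$, these two alternatives agree on $\ta_{(1)}$ --- indeed on $\ta_\si$ for every self-conjugate $\si$ --- since $(1)^{\mathrm{tr}}=(1)$. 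So no amount of information about $\ta_{(1)}$ can distinguish them; one must probe depth at least two, e.g. separate $\ta_{(2)}$ from $\ta_{(1^2)}$. Theorem \ref{3crystals} does not help either: it concerns the commutation of the three crystal structures on a single Fock space and says nothing about transporting Heisenberg operators across the wall. What actually pins down the transpose is the explicit description of the Heisenberg operators on the two sides of the wall --- for $\ka>0$ they add vertical $e$-strips, for $\ka<0$ horizontal ones (\cite[Section 4.2.3, Proposition 5.3]{Losev2015}, quoted in this paper just before Theorem \ref{wcmul1}) --- combined with an analysis of the effect of $\WC$ on the cuspidal modules generating the categorical Heisenberg action. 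You candidly flag this as ``the heart of Losev's argument,'' and it is; but as written your proposal proves (1) in outline and leaves (2) unestablished.
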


For the next definition, if $\bla$ is an arbitrary $\ell$-partition and ${\bf s}$ an arbitrary multicharge we denote by   
 $(|\bla,\bs \rangle)^\mathrm{tr}:=|\bla^{\mathrm{tr}},-\bs\rangle$ where $\bla^{\mathrm{tr}}$ is  the $\ell$-partition 
 $((\lambda^1)^{\mathrm{tr}},\ldots,(\lambda^\ell)^\mathrm{tr})$
  and $-\bs:=(-s_1,\ldots,-s_\ell)$.  Now it follows from the definition of the crystal operators and \cite[Section 4.1.4]{Losev2015} that when changing $\kappa$ to $-\kappa$, $()^\mathrm{tr}$ commutes with the $\sle$-crystal up to changing $\tilde{f}_i$ to $\tilde{f}_{-i}$  (see \cite[Section 3.2.3]{JaconLecouvey2010}), and commutes with  the $\slinf$-crystal  up to taking the transpose (when $\kappa<0$, the $\slinf$-crystal adds horizontal $e$-strips instead of vertical $e$-strips, see \cite[Section 4.2.3]{Losev2015}).  
We deduce the following result.

\begin{theorem}\label{wcmul1}
Assume that we are in the integral case,
and that moreover $(\ka,\bs)$ is asymptotic. Assume that $\bla=(\lambda^1,\ldots,\lambda^\ell)$ is in the connected component of empty multipartition for the $\sle$-crystal and the $\slinf$-crystal.
 Then we have: 
 $$\Psi_{-{\bs}_\rev\to -{\bs}_{\mathrm{opp}}} \circ \Phi  (\bla)=\wc (\bla)^\mathrm{tr}$$
\end{theorem}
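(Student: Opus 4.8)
The plan is to transport a single reduced expression for $|\bla,\bs\rangle$ through the four maps and compare the outputs. First I would reformulate the hypothesis. Since the $\sle$- and $\slinf$-crystals on $\cF_{e,\bs}$ commute (\Cref{3crystals}) and both $\ta_\sigma$ and $\tf_i$ preserve the charge $\bs$, the connected component of $|\bemp,\bs\rangle$ under the combined action of these two crystals consists exactly of the vectors $\ta_\sigma\tf_{i_p}\cdots\tf_{i_1}|\bemp,\bs\rangle$. In the language of \Cref{bijtriple}, the assumption that $\bla$ lies in this combined component is precisely the statement that the $\slell$-datum $(\br,\bpi)$ attached to $|\bla,\bs\rangle$ coincides with that of $|\bemp,\bs\rangle$; so I may fix
\begin{equation*}
|\bla,\bs\rangle=\ta_\sigma\,\tf_{i_p}\cdots\tf_{i_1}\,|\bemp,\bs\rangle,\qquad \sigma\in\Pi,\ i_1,\dots,i_p\in\{0,\dots,e-1\},
\end{equation*}
with no $\slell$-operator appearing.

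For the left-hand side, properties (a), (b), (d) of \Cref{genmul}(1) give
\begin{equation*}
\Phi(|\bla,\bs\rangle)=\ta_{\sigma^{\mathrm{tr}}}\,\tf_{-i_p}\cdots\tf_{-i_1}\,|\bemp,-\bs_\rev\rangle .
\end{equation*}
Now $\Psi_{-\bs_\rev\to-\bs_{\mathrm{opp}}}$ is an $\sle$-crystal isomorphism, hence commutes with each $\tf_{-i_k}$; it commutes with $\ta_{\sigma^{\mathrm{tr}}}$ because the charge-shift isomorphisms coming from $\widehat{\fS}_\ell$ are compatible with the Heisenberg crystal inside the commuting triple of \cite{Gerber2016}; and it fixes the vacuum, sending $|\bemp,-\bs_\rev\rangle$ to $|\bemp,-\bs_{\mathrm{opp}}\rangle$ (these charge-shift isomorphisms fix the empty multipartition, cf.\ \cite{JaconLecouvey2008}, and the two vacua carry the same weight since $\bs_{\mathrm{opp}}$ and $\bs$ are equal modulo $e$ as multisets). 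Thus
\begin{equation*}
\Psi_{-\bs_\rev\to-\bs_{\mathrm{opp}}}\circ\Phi(|\bla,\bs\rangle)=\ta_{\sigma^{\mathrm{tr}}}\,\tf_{-i_p}\cdots\tf_{-i_1}\,|\bemp,-\bs_{\mathrm{opp}}\rangle .
\end{equation*}

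For the right-hand side, \Cref{wccomm1} gives $\wc\circ\tf_{i_k}=\tf_{i_k}\circ\wc$ and $\wc\circ\ta_\sigma=\ta_{\sigma^{\mathrm{tr}}}\circ\wc$, while the empty multipartition labels a finite-dimensional simple and is therefore fixed by the wall-crossing (\Cref{crystalcusp}), so $\wc(|\bemp,\bs\rangle)=|\bemp,\bs_{\mathrm{opp}}\rangle$; hence
\begin{equation*}
\wc(|\bla,\bs\rangle)=\ta_{\sigma^{\mathrm{tr}}}\,\tf_{i_p}\cdots\tf_{i_1}\,|\bemp,\bs_{\mathrm{opp}}\rangle,
\end{equation*}
the operators now being those of the $\kappa<0$ side. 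Applying $(\cdot)^{\mathrm{tr}}$ sends $|\bemp,\bs_{\mathrm{opp}}\rangle$ to $|\bemp,-\bs_{\mathrm{opp}}\rangle$ and turns each $\tf_{i_k}$ into $\tf_{-i_k}$. The crux of the argument, and the step I expect to be the main obstacle, is the effect of $(\cdot)^{\mathrm{tr}}$ on $\ta_{\sigma^{\mathrm{tr}}}$: on the $\kappa<0$ side the $\slinf$-crystal adds horizontal $e$-strips, and transposing the whole diagram converts these into the vertical $e$-strips of the $\kappa>0$ convention. I would argue that this is the precise meaning of ``$(\cdot)^{\mathrm{tr}}$ commutes with the $\slinf$-crystal up to transpose'': it effects the horizontal-to-vertical convention change \emph{without} a further transpose of the index, that is $(\cdot)^{\mathrm{tr}}\circ\ta_{\sigma^{\mathrm{tr}}}=\ta_{\sigma^{\mathrm{tr}}}\circ(\cdot)^{\mathrm{tr}}$ relative to the final convention. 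Granting this,
\begin{equation*}
\wc(|\bla,\bs\rangle)^{\mathrm{tr}}=\ta_{\sigma^{\mathrm{tr}}}\,\tf_{-i_p}\cdots\tf_{-i_1}\,|\bemp,-\bs_{\mathrm{opp}}\rangle,
\end{equation*}
which agrees with the left-hand side computed above. Concretely, the whole proof reduces to a bookkeeping of transposes of the Heisenberg index: $\Phi$ and $\wc$ each contribute one index-transpose, whereas $(\cdot)^{\mathrm{tr}}$ contributes only the strip-convention change; pinning this compatibility down rigorously via \cite[Sections 4.1.4 and 4.2.3]{Losev2015} is where the real work lies, and everything else is forced by the defining commutation relations together with the matching of the seeds at $|\bemp,\cdot\rangle$.
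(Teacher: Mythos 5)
Your proof is correct and follows essentially the same route as the paper's own argument: write $|\bla,\bs\rangle$ as a word in the operators $\tf_i$ and $\ta_\sigma$ applied to the vacuum, compute the left side via Theorem \ref{genmul}(1) together with the crystal isomorphism $\Psi_{-\bs_\rev\to-\bs_{\mathrm{opp}}}$, compute the right side via Proposition \ref{wccomm1}, and then apply $(\cdot)^{\mathrm{tr}}$. The subtlety you isolate — that $(\cdot)^{\mathrm{tr}}$ only effects the horizontal-to-vertical strip conversion of the Heisenberg operators, with no further transpose of the indexing partition — is precisely what the paper's closing step (``sending $\kappa'$ to $-\kappa'$ and applying $(\cdot)^{\mathrm{tr}}$'', relying on the discussion preceding the theorem) uses implicitly, and your reading is the one forced by consistency with the level-one formula of Corollary \ref{Lmul}.
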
\label{wcviapsi}
\begin{proof}
By hypothesis, there exist $\si\in\Pi$ and $(i_1,\ldots,i_p)\in (\mathbb{Z}/e\mathbb{Z})^p$ such that 
$$ \tf_{i_p}\ldots \tf_{i_1}\ta_{\sigma} |\bemp, \bs\rangle=|\bla, \bs\rangle$$
From Theorem \ref{genmul} $(1)$  together with the definition of the crystal isomorphism in Section \ref{ciso}, we have that:
 $$\begin{array}{rcl}
 \Psi_{-{\bs}_\rev\to -{\bs}_{\mathrm{opp}}} \circ \Phi  (\tf_{i_p}\ldots \tf_{i_1}\ta_{\sigma} |\bemp, \bs\rangle)&=& 
\tf_{-i_p}\ldots \tf_{-i_1} \Psi_{-{\bs}_\rev\to -{\bs}_{\mathrm{opp}}} \circ \Phi  (\ta_{\sigma} |\bemp, \bs\rangle)\\
&=&\tf_{-i_p}\ldots \tf_{-i_1}   \ta_{\sigma^\mathrm{{tr}}} |\bemp, -\bs_{\mathrm{opp}}\rangle 
\end{array}$$
 using the explicit formulae of the crystal isomorphism together with the explicit formula of the action of $\ta_\si$ \cite[Section 5]{Gerber2016a}. 
On the other hand, by Proposition \ref{wccomm1}, we also get:
 $$ \wc  (\tf_{i_p}\ldots \tf_{i_1}\ta_{\sigma} |\bemp, \bs\rangle)= 
  \tf_{i_p}\ldots \tf_{i_1}  \ta_{\sigma^\mathrm{tr}} |\bemp, \bs_{\mathrm{opp}}\rangle$$
  where the right-hand-side is computed with respect to $\kappa'<0$. The result then follows by sending $\kappa'$ to $-\kappa'$ (the latter is positive) and applying our operator $()^\mathrm{tr}$. 
\end{proof}

This Theorem is in fact a generalization of a result by  Losev in level 1  \cite[Corollary 5.7]{Losev2015}, which we recover below.
Using the notation of Remark \ref{rem_bij}, denote 
$$
\begin{array}{cccc}
M_e : &  \Pi & \lra & \Pi \\
 & \la=(\sigma^e) \sqcup \rho & \longmapsto & (\sigma^\mathrm{tr})^e \sqcup m_e(\rho)
\end{array}
$$

\begin{corollary}[Losev]\label{Lmul}
Suppose $\ell=1$. 
Then for all $\la\in\Pi$,
$$\wc(\la) = (M_e(\la))^\mathrm{tr}.$$
\end{corollary}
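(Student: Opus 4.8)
The plan is to deduce this from Theorem \ref{wcmul1} by specializing to $\ell=1$, after checking three things: that the hypotheses of that theorem are automatic in level $1$, that $\Phi$ coincides with $M_e$, and that the crystal isomorphism $\Psi$ trivializes. First I would verify the hypotheses. Since $\fS_1$ is trivial and the asymptotic condition only constrains differences $\left(s_{\pi(i)}-\tfrac{\pi(i)}{\ka\ell}\right)-\left(s_{\pi(i+1)}-\tfrac{\pi(i+1)}{\ka\ell}\right)$, it is vacuous when $\ell=1$; hence every integral parameter $(\ka,\bs)=(\pm 1/e,s_1)$ is asymptotic. Moreover, by Remark \ref{rem_bij} every partition decomposes uniquely as $\la=(\sigma)^e\sqcup\rho$ with $\rho$ being $e$-regular, and by the commutation of the $\sle$- and $\slinf$-crystals this means $|\la,\bs\rangle=\tf_{i_p}\ldots\tf_{i_1}\ta_\sigma|\bemp,\bs\rangle$ for a suitable sequence; thus $\la$ lies in the connected component of $\bemp$ for the combined crystal, as required in Theorem \ref{wcmul1}.

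Next I would identify $\Phi=M_e$ in level $1$. Specializing Theorem \ref{genmul}(2) to $\ell=1$, where there is no $\sll$-crystal and $\beta$ reduces to the bijection $|\la,\bs\rangle\mapsto(\rho,\sigma)$ of Remark \ref{rem_bij}, gives $\Phi(\la)=\beta^{-1}\big({\tt\Phi}_{e,\br}(\rho),\sigma^{\mathrm{tr}}\big)=(\sigma^{\mathrm{tr}})^e\sqcup{\tt\Phi}_{e,\br}(\rho)$. By Remark \ref{rem_mulFayers}(2) together with Corollary \ref{mul_coincide_on_uglovs} we have ${\tt\Phi}_{e,\br}=\fm_{e,\br}=m_e$ on $e$-regular partitions, and $\ta_{\sigma^{\mathrm{tr}}}|\bemp\rangle=(\sigma^{\mathrm{tr}})^e$, so $\Phi(\la)=(\sigma^{\mathrm{tr}})^e\sqcup m_e(\rho)=M_e(\la)$, matching the definition of $M_e$ exactly.

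Then I would check that $\Psi_{-\bs_\rev\to-\bs_{\mathrm{opp}}}$ is the identity on partitions in level $1$. For $\ell=1$ the extended affine symmetric group $\widehat{\mathfrak{S}}_1$ reduces to $\Z$ acting by $s_1\mapsto s_1+e$, so a $\pi'$-asymptotic representative $\bs_{\mathrm{opp}}$ may be taken congruent to $\bs$ modulo $e$; hence $-\bs_{\mathrm{opp}}$ lies in the orbit of $-\bs_\rev=-\bs$ and satisfies $-s_{\mathrm{opp}}\equiv -s_1\pmod e$. The Fock spaces $\cF_{e,-\bs_\rev}$ and $\cF_{e,-\bs_{\mathrm{opp}}}$ therefore carry the \emph{same} $\sle$-crystal graph with the same arrow-colouring, forcing the crystal isomorphism between them to fix each underlying partition.

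Combining these three steps, Theorem \ref{wcmul1} gives $\Phi(\la)=\wc(\la)^{\mathrm{tr}}$ for every $\la\in\Pi$; substituting $\Phi=M_e$ yields $M_e(\la)=\wc(\la)^{\mathrm{tr}}$, and applying the involutive transpose to both sides gives $\wc(\la)=\big(M_e(\la)\big)^{\mathrm{tr}}$. The genuinely delicate point is the third step: one must confirm that the reduction of $\widehat{\mathfrak{S}}_1$ to $\Z$ and the congruence $-s_{\mathrm{opp}}\equiv -s_1\pmod e$ force $\Psi$ to be the identity on the underlying set of partitions, and not merely to preserve the crystal structure up to a relabelling that could permute partitions within a component.
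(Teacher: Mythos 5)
Your proof is correct and takes essentially the same route as the paper's: specialize Theorem \ref{wcmul1} to $\ell=1$, identify $\Phi$ with $M_e$ via Remark \ref{rem_bij} and Theorem \ref{genmul}(2), and note that $\Psi_{-\bs_\rev\to-\bs_{\mathrm{opp}}}$ trivializes because the charge is irrelevant in level $1$ (Remark \ref{rem_mulFayers}). Your additional verifications --- the vacuousness of the asymptotic condition, membership of every partition in the connected component of $\bemp$ for the combined crystals, and the mod-$e$ argument showing $\Psi$ is the identity on partitions --- are details the paper leaves implicit, but they do not alter the approach.
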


\begin{proof}
In the case $\ell=1$, the charge is irrelevant (see also Remark \ref{rem_mulFayers}), thus so is $\Psi_{-\bs_\rev\to\ -\bs_{\mathrm{opp}}}$.
By Remark \ref{rem_bij},
$\la=(\sigma^e) \sqcup \rho$ is the level $1$ analogue of the decomposition of Corollary \ref{bijtriple}
used to define $\Phi$, and we can identify $M_e(\la)$ with $(m_e(\rho), (\sigma^\mathrm{tr})^e)$.
Thus by Theorem \ref{genmul} (2), we have $\Phi(\la)=(m_e(\rho), (\sigma^\mathrm{tr})^e)=M_e(\la)$, and we conclude using Theorem \ref{wcmul1}.
\end{proof}

We are able to partially generalize the level $1$ statement to level $\ell$:
\begin{theorem}\label{wcmul2}
Assume that we are in the integral case,
and that moreover $(\ka,\bs)$ is asymptotic.
Let $k\in \{1,\ldots,\ell\}$  be such that 
$$s_k=\operatorname{max} \{s_i\ \, ;\, i=1,\ldots,\ell\}$$ 
Assume that $\bla=(\lambda^1,\ldots,\lambda^\ell)$ is such that  $\lambda^i$ is $e$-regular for all $i\in \{1,\ldots,\ell-1\} \setminus \{k\}$ and $\lambda^k$ is arbitrary. 
The combinatorial wall-crossing $\wc(\bla)$ is then given by the formula:
$$\wc(\bla)=(m_e (\lambda^1),\ldots,M_e(\lambda^k),\ldots,m_e (\lambda^\ell))^\mathrm{tr}.$$
\end{theorem}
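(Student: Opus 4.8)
The plan is to reduce the statement to Theorem \ref{wcmul1}. Once we know that $\bla$ lies in the connected component of $\bemp$ for both the $\sle$- and $\slinf$-crystals, that theorem gives
\begin{equation*}
\wc(\bla) = \bigl( \Psi_{-\bs_\rev \to -\bs_{\mathrm{opp}}} \circ \Phi(\bla) \bigr)^{\mathrm{tr}},
\end{equation*}
so it remains to verify this hypothesis and to evaluate the right-hand side componentwise. The first point is that in the asymptotic regime the two crystals decouple along the components of a multipartition: the operators $\tf_i$ build up each component as in level $1$, while, for the chosen orientation with $s_k = \max_i s_i$, the Heisenberg operators $\ta_\sigma$ deposit their $e$-strips only on the $k$-th component. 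Writing the euclidean division $\lambda^k = (\sigma)^e \sqcup \rho^k$ of Remark \ref{rem_bij} with $\rho^k$ being $e$-regular, this lets me factor
\begin{equation*}
|\bla, \bs\rangle = \ta_\sigma \, \tf_{i_p} \cdots \tf_{i_1} \, |\bemp, \bs\rangle ,
\end{equation*}
where $\bnu := \tf_{i_p} \cdots \tf_{i_1} |\bemp, \bs\rangle = (\lambda^1, \ldots, \rho^k, \ldots, \lambda^\ell)$ has all components $e$-regular. Since $(\ka,\bs)$ is asymptotic, Proposition \ref{ariki} identifies $\ug_{e,\bs}$ with $\kl_{e,\bs}$, and in this separated regime the all-$e$-regular tuples are exactly the Uglov ones; hence $\bnu \in \ug_{e,\bs}$ and $\bla$ indeed lies in the $\sle + \slinf$ component of $\bemp$, so Theorem \ref{wcmul1} applies.

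Next I would compute $\Phi(\bla)$ from the intertwining relations $\Phi \circ \tf_i = \tf_{-i} \circ \Phi$ and $\Phi \circ \ta_\sigma = \ta_{\sigma^{\mathrm{tr}}} \circ \Phi$ of Theorem \ref{genmul}(1):
\begin{equation*}
\Phi(|\bla, \bs\rangle) = \ta_{\sigma^{\mathrm{tr}}} \, \tf_{-i_p} \cdots \tf_{-i_1} \, |\bemp, -\bs_\rev\rangle .
\end{equation*}
The $\sle$-factor $\tf_{-i_p} \cdots \tf_{-i_1} |\bemp, -\bs_\rev\rangle$ equals $\Phi(\bnu) = {\tt\Phi}_{e,\bs}(\bnu)$ by Corollary \ref{mul_coincide_on_uglovs}, and by the componentwise structure of the asymptotic $\sle$-crystal this applies the level-$1$ Mullineux involution $m_e$ in each slot, producing $m_e(\lambda^i)$ for $i \neq k$ and $m_e(\rho^k)$ in the $k$-slot. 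Passing from $\bs$ to $-\bs_\rev$ reverses the asymptotic orientation of the charge, so this reindexing must be tracked carefully, and the Heisenberg factor $\ta_{\sigma^{\mathrm{tr}}}$ now deposits its transposed strips $(\sigma^{\mathrm{tr}})^e$ on the component of $-\bs_\rev$ of maximal charge.

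Finally I would apply the $\sle$-crystal isomorphism $\Psi_{-\bs_\rev \to -\bs_{\mathrm{opp}}}$, which moves the charge into the $\pi'$-asymptotic chamber of $-\bs_{\mathrm{opp}}$; because $\pi'$ reverses $\pi$ (Proposition \ref{prop_wc}), this reordering, together with its compatibility with the Heisenberg crystal, restores the components to their original indexing and relocates the transposed strips onto the $k$-th component, where they recombine with $m_e(\rho^k)$ into $(\sigma^{\mathrm{tr}})^e \sqcup m_e(\rho^k) = M_e(\lambda^k)$. Reading off the remaining slots then gives $\Psi_{-\bs_\rev \to -\bs_{\mathrm{opp}}} \circ \Phi(\bla) = (m_e(\lambda^1), \ldots, M_e(\lambda^k), \ldots, m_e(\lambda^\ell))$, and applying $()^{\mathrm{tr}}$ yields the stated formula for $\wc(\bla)$.

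The hardest part will be this last step, namely tracking the Heisenberg $e$-strips through the charge reversal $\bs \mapsto -\bs_\rev$ (under which the maximal-charge component $k$ becomes minimal) and checking that the explicit crystal isomorphism of Section \ref{ciso} returns them precisely to the $k$-th slot. This forces one to combine the explicit description of $\Psi_{\bs \to \bs'}$ from \cite{JaconLecouvey2008} with the explicit Heisenberg crystal formula of \cite[Section 5]{Gerber2016a}, exactly as in the proof of Theorem \ref{wcmul1}, and to use the $e$-regularity hypothesis on the slots $i \neq k$ to guarantee that no additional strips are created there.
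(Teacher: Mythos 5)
Your reduction to Theorem \ref{wcmul1} breaks down at its first step. The hypothesis of Theorem \ref{wcmul1} is that $\bla$ lies in the connected component of $\bemp$ for both the $\sle$- and $\slinf$-crystals, and you argue this holds because ``in this separated regime the all-$e$-regular tuples are exactly the Uglov ones.'' That claim is false. Proposition \ref{ariki} does identify $\ug_{e,\bs}(n)$ with $\kl_{e,\bs}(n)$ in the asymptotic regime, but Kleshchev multipartitions form a \emph{proper} subset of the $\ell$-tuples of $e$-regular partitions: the component of $|\bemp,\bs\rangle$ realizes the highest weight crystal of weight $\Lambda_{s_1}+\cdots+\Lambda_{s_\ell}$, not a tensor product of $\ell$ level-one crystals. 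Concretely, take $\ell=2$, $e=2$, $\bs=(0,N)$ with $N$ even and $N>n-1$: the two addable boxes of $\bemp$ both have residue $0$, so only one of $((1),\emp)$, $(\emp,(1))$ is a Kleshchev bipartition, while both satisfy the hypothesis of Theorem \ref{wcmul2} (with $k=2$). Hence the domain of Theorem \ref{wcmul2} is not contained in that of Theorem \ref{wcmul1}, your intermediate multipartition $\bnu$ need not be expressible as $\tf_{i_p}\cdots \tf_{i_1}|\bemp,\bs\rangle$, and the proposed reduction cannot work as stated.

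The idea you are missing---and the one the paper uses---is a deformation argument: by \cite[Proposition 3.1]{Losev2015}, the combinatorial wall-crossing is unchanged when the parameter is deformed to a Weil generic one. For generic parameters, boxes in different components never share a residue, so one gets an action of a tensor product of $\ell$ copies of $\sle$, one per component; in that decoupled crystal every tuple of $e$-regular partitions \emph{is} reachable from $\bemp$, and commutation of the Kashiwara operators with $\wc$ (Proposition \ref{wccomm1}) together with $\wc(\bemp)=\bemp$ forces the componentwise formula $(m_e(\lambda^1),\ldots,m_e(\lambda^\ell))^{\mathrm{tr}}$ in the all-regular case. The second half of your argument---peeling off a Heisenberg operator $\ta_\sigma$ that acts only on the $k$-th component and invoking $\wc\circ\ta_\sigma=\ta_{\sigma^{\mathrm{tr}}}\circ\wc$---is essentially the paper's second step and is sound; but it must be combined with the genericity argument, not with Theorem \ref{wcmul1}. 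Note also that even on the locus where your reduction would apply, you leave the evaluation of $\Psi_{-\bs_\rev\to-\bs_{\mathrm{opp}}}\circ\Phi$ (the charge-reversal bookkeeping you flag as ``the hardest part'') as a plan rather than a proof, so the argument would be incomplete in any case.
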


\begin{proof}
Let us first assume that   $\lambda^j$ is $e$-regular for all $j=1,\ldots,\ell$. 
By \cite[Proposition 3.1]{Losev2015}, we know that the wall-crossing is independent of the choice of a Weil generic parameter. In our situation this means that we can assume that 
 for each $\bla \in \Pi^\ell (n)$, if two boxes have the same residues then they are in the same component. 
We thus have an action of a Kac-Moody algebra as a tensor product of $\ell$ copies of $\sle$, one for each component of the multipartition.  
By Proposition \ref{wccomm1}, the associated Kashiwara operators commute with $\wc$. Moreover, we know that $\wc$ sends the empty multipartition to the empty multipartition and that for each $e$-regular partition $\lambda$, there exists a sequence of Kashiwara operators sending $\emptyset$ to $\lambda$. The result follows. 

Next, assume that $\bla=(\lambda^1,\ldots,\lambda^\ell)$ is such that  $\lambda^j$ is $e$-regular for all $j\in \{1,\ldots,\ell\} \setminus \{k\}$ and $\lambda^k$ is arbitrary.  
 By our assumption on $k$ and the definition of the action of the Heisenberg crystal operators $\ta_\si$ in \cite[Proposition 5.3]{Losev2015} , we know that there exists $\sigma\in\Pi$ 
 and $\bmu'=(\mu^1 ,\ldots,\mu^\ell)$ such that $\mu^i=\lambda^i $ if $i\neq k$, $\mu^k $ is $e$-regular, and  $\ta_{\sigma} \bmu=\bla$. The result then follows from  Proposition \ref{wccomm1}.
\end{proof}

We obtain the following interesting corollary which was not immediate from the crystal graph perspective.
\begin{corollary}
Assume that we are in the integral case and that  $(\ka,\bs)$ is an asymptotic parameter and assume that  $\bla=(\lambda^1,\ldots,\lambda^\ell)$  is a highest weight vertex such that each $\lambda^j$ is $e$-regular. Then 
 $\wc(\bla)=(m_e (\lambda^1),\ldots,m_e (\lambda^\ell))^\mathrm{tr}$ is a highest weight vertex for the opposite asymptotic parameter. 
\end{corollary}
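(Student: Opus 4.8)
The plan is to read off the displayed formula directly from \Cref{wcmul2} and then to deduce the highest-weight assertion formally from the commutation properties of $\wc$ recorded in \Cref{wccomm1}.

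First I would invoke \Cref{wcmul2}, taking $k$ so that $s_k$ is maximal among the $s_i$. Its hypotheses hold here because every component $\lambda^j$ is $e$-regular (in particular $\lambda^k$, which is allowed to be arbitrary), so it gives
$$\wc(\bla)=(m_e(\lambda^1),\ldots,M_e(\lambda^k),\ldots,m_e(\lambda^\ell))^{\mathrm{tr}}.$$
The only point to check is that $M_e(\lambda^k)=m_e(\lambda^k)$ when $\lambda^k$ is $e$-regular. This is immediate from the definition of $M_e$ together with \Cref{rem_bij}: writing $\lambda^k=(\sigma^e)\sqcup\rho$ with $\rho$ $e$-regular and $\sigma\in\Pi$, the $e$-regularity of $\lambda^k$ forces each part to occur strictly fewer than $e$ times, so $\sigma=\emptyset$ and $\rho=\lambda^k$, whence $M_e(\lambda^k)=m_e(\lambda^k)$. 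This yields the asserted formula $\wc(\bla)=(m_e(\lambda^1),\ldots,m_e(\lambda^\ell))^{\mathrm{tr}}$.

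For the highest-weight claim I would argue purely formally. By \Cref{wccomm1}(1) the bijection $\wc$ commutes with every $\sle$-crystal operator $\tf_i$, the source crystal being attached to $(\ka,\bs)$ and the target crystal to the opposite asymptotic parameter. A bijection commuting with all the $\tf_i$ necessarily sends $\sle$-highest weight vertices to $\sle$-highest weight vertices: were $\wc(\bla)=\tf_i\bnu$ for some $i$ and some $\bnu$, then writing $\bnu=\wc(\bmu)$ and using $\tf_i\wc(\bmu)=\wc(\tf_i\bmu)$ together with the injectivity of $\wc$ would give $\bla=\tf_i\bmu$, contradicting that $\bla$ is a highest weight vertex. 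Applying the same argument to \Cref{wccomm1}(2), which gives $\wc\circ\ta_\si=\ta_{\si^{\mathrm{tr}}}\circ\wc$, shows that $\wc$ also preserves $\slinf$-highest weight vertices, so the conclusion is insensitive to which notion of highest weight vertex is meant. Since $\bla$ is a highest weight vertex, so is $\wc(\bla)=(m_e(\lambda^1),\ldots,m_e(\lambda^\ell))^{\mathrm{tr}}$.

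I do not expect a genuine obstacle, since the corollary is essentially a formal consequence of results already in hand. Its interest, and the reason it is \emph{not} immediate from the crystal graph perspective, is precisely that the stability of highest weight vertices under ``apply $m_e$ in each component, then transpose'' is opaque combinatorially but transparent functorially. The only care needed is bookkeeping: confirming that the source and target $\sle$-crystals in \Cref{wccomm1} are those attached to $(\ka,\bs)$ and to its opposite, so that ``highest weight vertex for the opposite asymptotic parameter'' is indeed what the formal argument produces.
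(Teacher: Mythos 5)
Your proof is correct and is exactly the derivation the paper intends: the corollary is stated there without an explicit argument, immediately after Theorem \ref{wcmul2}, and the implicit proof is precisely your combination of that formula (plus the observation that $M_e$ reduces to $m_e$ on $e$-regular partitions, since $\sigma=\emptyset$ in the decomposition of Remark \ref{rem_bij}) with the commutation properties of $\wc$ from Proposition \ref{wccomm1}, which force a bijection intertwining the $\sle$- and $\slinf$-crystal operators to send highest weight vertices to highest weight vertices. Nothing is missing.
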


\begin{example}
Take $\ell=2$ and $s\in \mathbb{Z}$ such that $s>n-1$  so that ${\bf s}=(0,s)$ is an asymptotic $2$-charge. The bipartitions $(\lambda^1,\lambda^2)$ which are both 
highest weight vertex for the $\slinf$-crystal and the $\sle$-crystal are exactly the ones satisfying $\lambda^2=\emptyset$ and one of the following conditions:
\begin{itemize}
\item $\lambda^1=\mu^e$  for a partition $\mu$.
\item $\lambda^1$ has exactly one good removable box of residue $s \mod e$.
\end{itemize}
Let us consider the second case and assume that $\lambda^1$ is $e$-regular. Then our theorem asserts that $\wc(\bla)=(m_e(\lambda^1)^\mathrm{tr},\emptyset)$. 
 This is consistent with the fact that it must be  both a 
highest weight vertex for the $\slinf$-crystal and the $\sle$-crystal because $m_e(\lambda^1)^\mathrm{tr}$ has exactly one removable box of residue $-s \mod e.$

\end{example}

\subsection{Ringel duality for $\cO_{\kappa,\bs}$}\label{subsection:Ringel}

Consider $\cO_{\kappa,\bs}(n)$, the cyclotomic Cherednik category $\cO$ of the complex reflection group $G(\ell,1,n)$ for parameters $\kappa=r/e$ such that $e\geq 2$ and $\mathrm{gcd}(r,e)=1$, and $\bs\in\Q^\ell$ satisfying $\kappa e s_i\in\Z$ for $i=1,\ldots,\ell$.
The category $\cO_{\kappa,\bs}(n)$ is a highest weight category with standard objects $\Delta(\bla)$, costandard objects $\nabla(\bla),$ 
simple objects $L(\bla)$, etc, indexed by $\{\bla\in\Pi^\ell(n)\}$ \cite{GGOR2003}. By general theory \cite[Appendix]{Donkin} there is a corresponding Ringel duality functor 
$$\mathsf{R}: D^b(\cO_{\kappa,\bs}(n))\stackrel{\sim}{\longrightarrow} D^b(^\vee\cO_{\kappa,\bs}(n))$$
induced by the derived Hom functor with respect to a full tilting module, and which was realized explicitly in \cite{GGOR2003}. 
The category $^\vee\cO_{\kappa,\bs}(n)$ is called the Ringel dual of $\cO_{\kappa,\bs}(n)$. It is shown in \cite{GGOR2003} that $^\vee\cO_{\kappa,\bs}(n)$ is again a cyclotomic Cherednik category $\cO$. In particular, it holds that $^\vee\cO_{\kappa,\bs}(n)\ \simeq \cO_{\kappa',\bs'}(n)$
for some parameters $\kappa',\bs'$ of the same rank and level, so that the Grothendieck group of $\bigoplus\limits_n\cO_{\kappa',\bs'}(n)$ is again a level $\ell$ and rank $e$ Fock space. 
Losev proved that Ringel duality is perverse with respect to the filtration by support of simple modules \cite[Lemma 2.5]{Losev2017}. As discussed in Section \ref{subsection:perverse}, this means that we can pick out a unique composition factor $L(\bmu)$ in the homology of the complex $\mathsf{R}(L(\bla))$ such that $\bmu$ has maximal bidepth in the $\sle$ and $\slinf$-crystals. Set $\mathsf{r}(\bla)=\bmu$. For the rest of this section, we assume $n$ is fixed and write $\cO_{\kappa,\bs}$ as shorthand for $\cO_{\kappa,\bs}(n)$.

Note that in Section \ref{subsection:wcfunctors}, we have defined wall-crossing functors as crossing a single essential wall. However, there is a derived equivalence $\mathsf{WC}_{c'\leftarrow c}:D^b(\cO_c)\stackrel{\sim}{\rightarrow}D^b(\cO_{c'})$ for any $c'$ in the lattice $\mathfrak{c}_\Z$ of parameters having integral difference with $c$, defined by taking the derived tensor product with a Harish-Chandra bimodule \cite{Losev2017}. By abuse of language let us refer to such equivalence as a wall-crossing functor whenever $c$ and $c'$ do not lie in the same chamber.

We are now going to recall Losev's comparison of the Ringel duality with the wall-crossing functor to a maximally distant chamber. The first part of the following proposition appears in \cite{Losev2017} immediately after \cite[Theorem 6.1]{Losev2017}. We add some details to the proof by justifying the assumption of the existence of a parameter $c'$ in the same lattice as $c$ such that the order on category $\cO_{c'}$ is opposite to that on $\cO_c$.

\begin{lemma}\label{lem_sleRingel} Assume the parameter $c=(\kappa,\bs)$ is as in the assumptions of \cite[Theorem 4.1]{Losev2017}. The following statements hold. \begin{enumerate}

\item The Ringel duality $\mathsf{R}:D^b(\cO_{\kappa,\bs})\rightarrow D^b(^\vee\cO_{\kappa,\bs})$ can be realized by the (inverse) wall-crossing functor to the opposite chamber.

\item The combinatorial bijection $\mathsf{r}$ induced by Ringel duality commutes with the $\sle$-crystal.
\end{enumerate}
\end{lemma}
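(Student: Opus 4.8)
The plan is to handle the two parts in sequence, with the real content in (1); part (2) will then follow formally.

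For part (1), I would start from the general principle that the Ringel dual of a highest weight category carries the opposite highest weight order. Since $^\vee\cO_{\kappa,\bs}\simeq\cO_{\kappa',\bs'}$ for some parameters by \cite{GGOR2003}, and since two parameters in the same chamber yield equivalent highest weight categories \cite[Proposition 2.8]{Losev2015}, it suffices to exhibit a parameter $c'=(\kappa',\bs')$ inside the lattice $\mathfrak{c}_\Z$ whose $c$-function order on $\Pi^\ell(n)$ is opposite to that of $c=(\kappa,\bs)$. Granting this, the identification of $\mathsf{R}$ with the (inverse) wall-crossing functor $\mathsf{WC}_{c'\leftarrow c}$ to that opposite chamber is exactly Losev's assertion following \cite[Theorem 6.1]{Losev2017}, so this existence is the single gap to be filled.

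To produce $c'$, I would exploit that $c_\bla=\sum_{\gamma\in[\bla]}co_c(\gamma)$ is affine-linear in $(\kappa,\kappa s_1,\ldots,\kappa s_\ell)$, with a \emph{parameter-independent} summand $-\sum_j j\,|\lambda^j|$ coming from the $-j/\ell$ appearing in $h_j$. Reversing the order amounts to requiring $c'_\bla-c'_{\bla'}=-\mu\,(c_\bla-c_{\bla'})$ for all $\bla,\bla'$ of size $n$ and some $\mu>0$; matching the content coefficient forces $\kappa'=-\mu\kappa$, and matching the per-component coefficients---using that $\bla,\bla'$ have equal size, so their component-size differences sum to zero---forces $s'_j=s_j-\tfrac{(\mu+1)j+C}{\mu\kappa\ell}$ for a free constant $C$. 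The main obstacle is to choose $\mu$ and $C$ so that $c'$ lands in $\mathfrak{c}_\Z$, i.e. so that $\kappa'-\kappa\in\Z$ and $\kappa'(s'_i-s'_j)-\kappa(s_i-s_j)\in\Z$; the delicate point is precisely the parameter-independent term above, which produces the extra $(\mu+1)j$ shift and prevents a naive sign change of the parameters from working. A short computation shows that these integrality conditions, together with $\kappa' e s'_i\in\Z$, all hold as soon as $\mu+1$ is a common multiple of $\ell$ and $e$; taking for instance $\mu+1=\operatorname{lcm}(\ell,e)$ and $C=0$ yields an explicit $c'\in\mathfrak{c}_\Z$ with $\kappa'<0$ realizing the strictly opposite order, which completes (1).

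For part (2), I would deduce the commutation directly from (1). The functor $\mathsf{WC}_{c'\leftarrow c}$ is obtained by deriving the tensor product with a Harish-Chandra bimodule, and such functors commute with the Bezrukavnikov--Etingof parabolic induction and restriction functors \cite{BezrukavnikovEtingof} categorifying the $\sle$-action; this is the same mechanism already underlying Proposition \ref{wccomm1}(1). Hence the induced isomorphism of Grothendieck groups intertwines the $\sle$-actions and commutes with the Kashiwara operators $\tilde f_i$, with the arrow labels preserved and no $i\mapsto -i$ twist, exactly as in Proposition \ref{wccomm1}(1). Since Ringel duality is perverse for the support filtration and the categorical $\sle$-action respects that filtration, this commutation passes to the canonical perverse bijection, giving that $\mathsf{r}$ commutes with the $\sle$-crystal. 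I expect this part to be routine once (1) is secured, the only point meriting care being the preservation of the crystal labeling.
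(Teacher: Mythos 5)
Your proposal is correct and follows essentially the paper's own strategy: for (1) both arguments reduce to exhibiting a parameter $c'\in\mathfrak{c}_\Z$ inducing the opposite $c$-function order and then quote Losev's identification (after \cite[Theorem 6.1]{Losev2017}) of $\mathsf{R}$ with the wall-crossing to the opposite chamber, and for (2) both run the argument of \cite[Proposition 5.6(1)]{Losev2015}. In (1) your explicit parameter ($\kappa'=-\mu\kappa$, $s_j'=s_j-\frac{(\mu+1)j}{\mu\kappa\ell}$ with $\mu+1=\operatorname{lcm}(e,\ell)$, $C=0$) differs from the paper's ($\kappa'=\kappa-n$ with $n$ suitably divisible and $s_j'=s_j+\frac{jn}{\ell\kappa\kappa'}$), but the two are instances of the same mechanism: each choice preserves the shifted charge $s_j-\frac{j}{\ell\kappa}$, so that $co_{c'}(\gamma)=\frac{\kappa'}{\kappa}\,co_c(\gamma)$ for every box $\gamma$ with $\kappa'/\kappa<0$, which reverses the order; your derivation is somewhat more systematic (it first determines the full solution space, with the free constant $C$), and your integrality check is sound given the standing assumption $\kappa e s_i\in\Z$ of the section.

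The one soft spot is in (2): you assert as automatic that the derived tensor product with a Harish-Chandra bimodule realizing the wall-crossing to the \emph{opposite} chamber intertwines the Bezrukavnikov--Etingof parabolic induction and restriction functors, "by the same mechanism" as \Cref{wccomm1}. This is not formal: \cite[Proposition 3.8]{Losev2015}, which underlies \Cref{wccomm1}, covers only crossings of a single essential wall, and the extension to the opposite-chamber wall-crossing (equivalently, to Ringel duality) is precisely \cite[Lemma 4.7]{Losev2017} --- this is the citation the paper supplies at exactly this point, and the one your argument needs. With that reference in place, your proof is complete.
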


\begin{proof} 
(1) This is a special case of \cite[Theorem 6.1]{Losev2017} where it is observed that it follows from \cite[Lemma 2.5]{Losev2017} and \cite[Theorem 4.1]{Losev2017}. 
We justify the existence of an opposite chamber and its intersection with the $\Z$-lattice of parameters $\mathfrak{c}_\Z$ in the case $W=G(\ell,1,n)$.

The category $\cO_{\kappa,\bs}$ is a highest weight category with respect to the order on $\Irr \ G(\ell,1,n)=\Pi^\ell(n)$ given by the $c$-function $c_{\bla}$ (see Section \ref{subsection:wcfunctors}). It is a general fact about Ringel duality for highest weight categories that the Ringel dual category has the same poset with the opposite partial order \cite[Appendix]{Donkin}. Thus the $c$-order on $\Pi^\ell(n)$ with respect to the parameters $(\kappa',\bs')$ is opposite to the $c$-order on $\Pi^\ell(n)$ with respect to the parameters $(\kappa,\bs)$. Recall from \cite{Losev2015} that the $c$-order is constant on the open chambers given by the complement of the hyperplane arrangement of essential walls. 

We claim that we can find an example of a parameter $c'=(\kappa',\bs')$ with the property that $\cO_{\kappa',\bs'}$ realizes the Ringel dual of $\cO_{\kappa,\bs}$ and $(\kappa',\bs')$ has integral difference with $(\kappa,\bs)$. Recall the definitions of walls and chambers from Section \ref{subsection:wcfunctors}. 
First, assume that $\ka>0$, and let $t\in \mathbb{Z}_{>0}$ such that $t{\bs}\in \mathbb{Z}^\ell$
and $n\in \mathbb{Z}$ be such that $\kappa-n<0$, $t$ divides $n$ and $n/(l\kappa)\in \mathbb{Z}$.
Set $c':=(\kappa',{\bf s}')\in \mathbb{Q}_{<0}^{\times} \times \mathbb{Q}^\ell$ such that $\kappa':=\kappa-n$ and for all $j=1,\ldots,\ell$, we have
$s_j '=s_j +\frac{jn}{\ell\kappa \kappa'}$.
Then for all $j=1,\ldots,\ell$, we have
$$\begin{array}{rcl}
\kappa' s_j'-\kappa s_j&=& (\kappa-n) (s_j+\displaystyle\frac{jn}{\ell\kappa (\kappa-n)}) -\kappa s_j \\
 &=&-ns_j+\displaystyle\frac{jn}{\ell \kappa} \in \mathbb{Z}.
\end{array}$$
Since we have in addition $\kappa-\kappa'\in \mathbb{Z}$, we deduce that $(\kappa,{\bf s})$ and $(\kappa',{\bf s}')$ are in the same lattice.
Now, let $\gamma=(a,b,j)\in \mathbb{Z}_{>0}\times \mathbb{Z}_{\geq 0} \times \{1,\ldots,\ell\}$ be a box of an $\ell$-partition. We have $$co_c (\gamma)=\kappa \ell (b-a+s_j-\frac{j}{\ell\kappa})\text{ and }co_{c'} (\gamma)=\kappa' \ell (b-a+s_j'-\frac{j}{\ell\kappa'})$$
 and for $j=1,\ldots,\ell$:
 $$\begin{array}{rcl}
s_j'-\displaystyle \frac{j}{\ell\kappa'}-(s_j-\frac{j}{\ell\kappa}) &=&\displaystyle\frac{jn}{\ell\kappa \kappa'} -  \frac{j}{\ell\kappa'}  + \frac{j}{\ell\kappa}\\
 &=&\displaystyle\frac{jn-j\kappa+j(\kappa-n)}{\ell\kappa (\kappa-n)}\\
 &=&0
\end{array}$$
As $\kappa$ and $\kappa'$ have opposite sign, we conclude that $c$ and $c'$
induce opposite orders with respect to the $c$-function.\\

(2) The proof follows by exactly the same argument as the proof of \cite[Proposition 5.6(1)]{Losev2015}. Indeed, the assumption of \cite[Proposition 5.6(1)]{Losev2015} that the wall-crossing is through a single essential wall is only used in reference to \cite[Proposition 3.8]{Losev2015}, which states that wall-crossing functors across essential walls intertwine the parabolic restriction functors. 
The analogous statement that (inverse) Ringel duality intertwines parabolic induction and restriction functors is proved in \cite[Lemma 4.7]{Losev2017}.
\end{proof}

The symmetrical statement to Proposition \ref{lem_sleRingel}(2) for the $\slell$-crystal follows by level-rank duality which switches the roles of $e$ and $\ell$ and commutes with Ringel duality.
For this, we need to be in the integral parameter case again.

\begin{lemma}\label{lem_sllRingel}
The map $\mathsf{r}$ commutes with the $\slell$-crystal.
\end{lemma}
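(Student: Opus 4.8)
The plan is to deduce the statement from Lemma \ref{lem_sleRingel}(2) by transporting everything through the level-rank duality map $\mathsf{k}=\mathsf{k}_s^{\ell,e}$ of Section \ref{section_genmul}, thereby exchanging the roles of $e$ and $\ell$. Recall from the square \eqref{lr_diag} that the $\slell$-crystal on $\Pi^\ell_s$ is, by construction, the pullback along $\mathsf{k}$ of the honest rank-$\ell$ crystal on the level-$e$ Fock space $\cF_{\ell,-s}$, so that $\tdf_j=\dot{\mathsf{k}}\circ\tdf_j\circ\mathsf{k}$, where the inner operator now acts on $e$-partitions. Since $\mathsf{k}$ is only available in the integral parameter case, we work there throughout, as announced before the statement.

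The first step is to record that the Ringel-duality bijection $\mathsf{r}$ on $\Pi^\ell$ corresponds under $\mathsf{k}$ to the analogous Ringel-duality bijection $\dot{\mathsf{r}}$ on $\Pi^e$ attached to the level-$e$, rank-$\ell$ Cherednik category $\cO$ whose Grothendieck group is $\cF_{\ell,-s}$; in other words,
\begin{equation}\label{eq:lr_ringel}
\mathsf{k}\circ\mathsf{r}=\dot{\mathsf{r}}\circ\mathsf{k},\qquad\text{equivalently}\qquad \mathsf{r}=\dot{\mathsf{k}}\circ\dot{\mathsf{r}}\circ\mathsf{k}.
\end{equation}
Granting \eqref{eq:lr_ringel}, the second step is purely formal. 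Applying Lemma \ref{lem_sleRingel}(2) with $e$ and $\ell$ interchanged shows that $\dot{\mathsf{r}}$ commutes with the honest rank-$\ell$ crystal on $e$-partitions, i.e. $\dot{\mathsf{r}}\circ\tdf_j=\tdf_j\circ\dot{\mathsf{r}}$. Using $\mathsf{k}\circ\dot{\mathsf{k}}=\id$ together with the definition of $\tdf_j$ on $\Pi^\ell_s$, we then compute
\begin{align*}
\mathsf{r}\circ\tdf_j
&=\dot{\mathsf{k}}\circ\dot{\mathsf{r}}\circ\mathsf{k}\circ\dot{\mathsf{k}}\circ\tdf_j\circ\mathsf{k}
=\dot{\mathsf{k}}\circ\dot{\mathsf{r}}\circ\tdf_j\circ\mathsf{k} \\
&=\dot{\mathsf{k}}\circ\tdf_j\circ\dot{\mathsf{r}}\circ\mathsf{k}
=\dot{\mathsf{k}}\circ\tdf_j\circ\mathsf{k}\circ\dot{\mathsf{k}}\circ\dot{\mathsf{r}}\circ\mathsf{k}
=\tdf_j\circ\mathsf{r},
\end{align*}
which is exactly the desired commutation (here the inner $\tdf_j$ acts on $e$-partitions, the outermost ones on $\ell$-partitions, following the conventions of \eqref{lr_diag}).

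The main obstacle is establishing \eqref{eq:lr_ringel}, that is, that level-rank duality commutes with Ringel duality at the level of the induced bijections. The natural route is categorical: by Remark \ref{Ugyv}(1), $\mathsf{k}$ is categorified, up to a conjugation twist, by the Koszul duality between the corresponding categories $\cO$, and Ringel duality is compatible with Koszul duality in the sense that the two derived equivalences commute up to the expected relabeling of parameters. Passing from the functors to the induced permutations of simple objects is legitimate because all the functors in play are perverse for the filtration by support of simple modules (Section \ref{subsection:perverse}, \cite[Lemma 2.5]{Losev2017}), and this filtration is preserved under level-rank duality; this yields \eqref{eq:lr_ringel}. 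Once this compatibility is in hand, the remainder of the argument is the formal computation above, and no further sign or charge bookkeeping is needed, since Lemma \ref{lem_sleRingel}(2) asserts a plain commutation rather than one twisted by $j\mapsto -j$.
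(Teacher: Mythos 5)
Your proposal is correct and follows essentially the same route as the paper: both reduce the claim to Lemma \ref{lem_sleRingel}(2) with $e$ and $\ell$ exchanged via the identity $\mathsf{r}=\dot{\mathsf{k}}\circ\dot{\mathsf{r}}\circ\mathsf{k}$, obtained by lifting level-rank duality to Koszul duality and using its compatibility with Ringel duality, followed by the same formal computation. The only difference is one of precision: the paper pins down that compatibility by citing standard Koszulity of $\cO_{\kappa,\bs}$ (\cite[Theorem 7.4]{RSVV}, \cite{Webster2017}) together with \cite{Mazorchuk}, where you assert it as the expected categorical fact.
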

\begin{proof}
 By \cite[Theorem 7.4]{RSVV} and \cite{Webster2017}, the category $\cO_{\kappa,\bs}$ is standard Koszul, which by \cite{Mazorchuk} implies that Ringel duality commutes with the Koszul duality. The Koszul duality $\mathsf{K}$ lifts the level-rank duality $\mathsf{k}$, by which the $\slell$-crystal is defined, to the categorical level \cite[Theorem 7.4]{RSVV}. Since we need to compare Ringel duality for level $e$ with Ringel duality for level $\ell$, write $\mathsf{R}$ for the former and $\dot{\mathsf{R}}$ for the latter; likewise, write ${\mathsf{r}}$ for the former and 
 $\dot{\mathsf{r}}$ for the latter on the level of Grothendieck groups. 
 We have
$\dot{\mathsf{R}}\mathsf{K}=\mathsf{K}\mathsf{R}$ and thus
 $\dot{\mathsf{r}}\mathsf{k}=\mathsf{k}\mathsf{r}$ and $\mathsf{r}\dot{\mathsf{k}}=\dot{\mathsf{k}}\dot{\mathsf{r}}$.
 Let $\tilde{\dot{f}}_j$ be an $\slell$ crystal operator. Recall from Diagram \ref{lr_diag} that the action of $\tilde{\dot{f}}_j$ in level $\ell$ and rank $e$ is defined by 
 $\dot{\mathsf{k}}\tilde{\dot{f}}_j\mathsf{k}$.
By the preceding lemma, in level $e$ and rank $\ell$ we have  $\tilde{\dot{f}}_j\dot{\mathsf{r}}=\dot{\mathsf{r}}\tilde{\dot{f}}_j$. Therefore, 
$\mathsf{r}(\dot{\mathsf{k}}\tilde{\dot{f}}_j\mathsf{k})=
\dot{\mathsf{k}}\dot{\mathsf{r}}\tilde{\dot{f}}_j\mathsf{k}=
\dot{\mathsf{k}}\tilde{\dot{f}}_j \dot{\mathsf{r}} \mathsf{k}=
(\dot{\mathsf{k}}\tilde{\dot{f}}_j\mathsf{k})\mathsf{r}.$
\end{proof}

\subsection{The combinatorial Ringel duality} 
For this section, we take again parameters to be integral.
The category $\cO_{\kappa,\bs}$ is equivalent as a highest weight category to the category $\cS_{\ka,\bs}$ 
of finite-dimensional modules over the diagrammatic Cherednik algebra with Uglov weighting defined by Webster \cite[Theorem 4.8]{Webster2017}.
Webster proved that for the category $\cS_{\ka,\bs}$, taking the Ringel dual corresponds to sending $\kappa$ to $-\kappa$ and keeping the charge $\bs$ fixed \cite[Corollary 5.11]{Webster2017}. 
Moreover, there is a natural isomorphism of diagram algebras
(which is given by replacing the label $i\in\Z/e\Z$ with $-i$ on black strands and the label $j\in\Z/\ell\Z$ with $-j$ 
on red strands, see \cite[Proposition 4.5]{Webster2017}). 
This isomorphism induces an equivalence $$\ast:\cS_{\ka,\bs} \rightarrow\cS_{-\ka,-\bs_\rev}$$ which evidently satisfies 
$\tilde{f}_{-i}\circ\ast=\ast\circ \tilde{f}_i$ and $\tilde{\dot{f}}_{-j}\circ\ast=\ast\circ\tilde{\dot{f}}_j$. Composing $*$ with Ringel duality for the diagrammatic Cherednik algebra then gives an equivalence (via identification of $\cO_{\kappa,\bs}$ with $\cS_{\kappa,\bs}$) $$\ast\circ \mathsf{R}:D^b(\cO_{\ka,\bs})\simeq D^b(\cS_{\ka,\bs})\stackrel{\sim}{\longrightarrow} D^b(\cS_{\ka,-\bs_\rev})\simeq D^b(\cO_{\ka,-\bs})$$
which is perverse, being the composition of abelian equivalences with a perverse equivalence \cite{ChuangRouquier2017}.

Set $\mathsf{D}=\ast\circ \mathsf{R}:D^b(\cO_{\kappa,\bs})\stackrel{\sim}{\rightarrow}D^b(\cO_{\kappa,-\bs_\rev})$ (c.f. \cite[Proposition 4.10]{GGOR2003}).
On the level of Grothendieck groups, 
$\mathsf{D}$ yields an involutive isomorphism of Fock spaces
$$\mathsf{d} : \cF_{e,\bs} \lra \cF_{e,-\bs_\rev}.$$
We call $\mathsf{d}$ the \textit{combinatorial Ringel duality}.

\begin{corollary}\label{twistedchapeauchapelle} For all $i=0,\ldots,e-1$ and all $j=0,\ldots,\ell-1$ we have
$$\mathsf{d}\tilde{f}_i=\tilde{f}_{-i}\mathsf{d}\qquad\hbox{ and }\qquad \mathsf{d}\tilde{\dot{f}}_j=\tilde{\dot{f}}_{-j}\mathsf{d} .$$
\end{corollary}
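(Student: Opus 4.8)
The plan is to factor the combinatorial Ringel duality $\mathsf{d}$ through its two constituents and to propagate the relevant intertwining relations across each. Recall that by construction $\mathsf{D}=\ast\circ\mathsf{R}$, where $\mathsf{R}$ is Ringel duality for the diagrammatic Cherednik algebra $\cS_{\ka,\bs}$ (identified with Ringel duality for $\cO_{\ka,\bs}$ through the equivalence of \cite[Theorem 4.8]{Webster2017}) and $\ast$ is the sign-flip equivalence. Passing to Grothendieck groups gives a factorization $\mathsf{d}=\mathsf{a}\circ\mathsf{r}$, where $\mathsf{r}$ is the bijection induced by $\mathsf{R}$ and $\mathsf{a}$ is the isomorphism of Fock spaces induced by $\ast$.

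First I would assemble the intertwining relations for each factor. By \Cref{lem_sleRingel}~(2) and \Cref{lem_sllRingel}, the map $\mathsf{r}$ commutes with both the $\sle$- and the $\slell$-crystals, so $\mathsf{r}\tilde{f}_i=\tilde{f}_i\mathsf{r}$ and $\mathsf{r}\tilde{\dot{f}}_j=\tilde{\dot{f}}_j\mathsf{r}$. For the second factor, the equivalence $\ast$ satisfies $\tilde{f}_{-i}\circ\ast=\ast\circ\tilde{f}_i$ and $\tilde{\dot{f}}_{-j}\circ\ast=\ast\circ\tilde{\dot{f}}_j$ at the categorical level, which on Grothendieck groups reads $\mathsf{a}\tilde{f}_i=\tilde{f}_{-i}\mathsf{a}$ and $\mathsf{a}\tilde{\dot{f}}_j=\tilde{\dot{f}}_{-j}\mathsf{a}$. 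With these in hand the verification is a one-line propagation,
\begin{equation*}
\mathsf{d}\tilde{f}_i=\mathsf{a}\mathsf{r}\tilde{f}_i=\mathsf{a}\tilde{f}_i\mathsf{r}=\tilde{f}_{-i}\mathsf{a}\mathsf{r}=\tilde{f}_{-i}\mathsf{d},
\end{equation*}
and the identical chain with $\tilde{\dot{f}}_j,\tilde{\dot{f}}_{-j}$ in place of $\tilde{f}_i,\tilde{f}_{-i}$ settles the $\slell$-case.

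The one point genuinely requiring care, which I would verify before writing anything else, is that the bijection $\mathsf{r}$ of \Cref{lem_sleRingel,lem_sllRingel} --- induced by the intrinsic functor $\mathsf{R}\colon D^b(\cO_{\ka,\bs})\to D^b({}^\vee\cO_{\ka,\bs})$ --- coincides with the factor $\mathsf{r}$ in $\mathsf{d}=\mathsf{a}\circ\mathsf{r}$ coming from the diagrammatic model. This is exactly where the intrinsic character of Ringel duality enters: since the functor depends only on the highest weight structure, the equivalence $\cO_{\ka,\bs}\simeq\cS_{\ka,\bs}$ identifies the two realizations of $\mathsf{R}$, hence their induced combinatorial maps. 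Once this identification is recorded, no further computation is needed and the statement is purely formal.
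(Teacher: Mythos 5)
Your proposal is correct and follows essentially the same route as the paper: both factor $\mathsf{d}=\ast\circ\mathsf{r}$ (your $\mathsf{a}\circ\mathsf{r}$), invoke Lemma \ref{lem_sleRingel}(2) and Lemma \ref{lem_sllRingel} for the commutation of $\mathsf{r}$ with the two crystals, and use the sign-flipping property of $\ast$ to conclude by direct propagation. The identification you flag as the delicate point --- that the intrinsic Ringel duality and its diagrammatic realization induce the same combinatorial bijection via the equivalence $\cO_{\ka,\bs}\simeq\cS_{\ka,\bs}$ --- is exactly the identification the paper makes implicitly when writing $\mathsf{d}=\ast\circ\mathsf{r}$, so making it explicit is a fair addition but not a departure.
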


\begin{proof}
This is straightforward from Lemmas \ref{lem_sleRingel} and \ref{lem_sllRingel}, because $\mathsf{d}=\ast\circ\mathsf{r}$ and 
$\ast$ verifies $\tilde{f}_{-i}\circ\ast=\ast\circ \tilde{f}_i$ and $\tilde{\dot{f}}_{-j}\circ\ast=\ast\circ\tilde{\dot{f}}_j$, as explained above.
\end{proof}

\begin{lemma}\label{twistedslinf} For all $\sigma\in\Pi$, we have $\mathsf{d}\tilde{a}_\sigma=\tilde{a}_{\sigma^\mathrm{tr}}\mathsf{d}$.
\end{lemma}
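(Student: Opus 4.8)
The plan is to argue exactly as in Corollary~\ref{twistedchapeauchapelle}, using the factorization $\mathsf{d}=\ast\circ\mathsf{r}$ on the level of Grothendieck groups and analyzing the two factors separately. Since the $\slinf$-relation of $\mathsf{d}$ is not immediate from the definition of $\ast$ (the discussion preceding Corollary~\ref{twistedchapeauchapelle} records only the $\sle$- and $\sll$-relations $\tilde f_{-i}\circ\ast=\ast\circ\tilde f_i$ and $\tilde{\dot f}_{-j}\circ\ast=\ast\circ\tilde{\dot f}_j$), the goal reduces to showing that $\mathsf{r}$ intertwines the $\slinf$-crystal \emph{with} a transpose while $\ast$ intertwines it \emph{without} one.

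First I would treat $\mathsf{r}$. By Lemma~\ref{lem_sleRingel}(1) the Ringel duality $\mathsf{R}$ is realized by the (inverse) wall-crossing functor to the opposite chamber, and by \cite[Corollary 5.11]{Webster2017} passing to the Ringel dual sends $\kappa$ to $-\kappa$. The categorical Heisenberg action of \cite{ShanVasserot} is built from the parabolic induction and restriction functors, and inverse Ringel duality intertwines these by \cite[Lemma 4.7]{Losev2017}; this is the very input used to prove Lemma~\ref{lem_sleRingel}(2). Hence $\mathsf{r}$ commutes with the categorical Heisenberg, but because the target has the opposite sign of $\kappa$ the $\slinf$-crystal there adds horizontal rather than vertical $e$-strips, so in the fixed (vertical-strip) normalization of $\tilde{a}_\sigma$ this commutation reads as $\mathsf{r}\circ\tilde{a}_\sigma=\tilde{a}_{\sigma^\mathrm{tr}}\circ\mathsf{r}$. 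Equivalently, one may invoke Proposition~\ref{wccomm1}(2) for the $\kappa=0$ wall-crossing $\wc$ together with the fact that the type~(b) wall-crossings are the $\sle$-crystal isomorphisms of Section~\ref{ciso}, which preserve the sign of $\kappa$ and hence commute with the $\slinf$-crystal.

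Next I would show $\ast\circ\tilde{a}_\sigma=\tilde{a}_\sigma\circ\ast$. The equivalence $\ast$ is induced by the diagram-algebra isomorphism of \cite[Proposition 4.5]{Webster2017}, which merely negates the labels $i\in\Z/e\Z$ on black strands and $j\in\Z/\ell\Z$ on red strands. As the Heisenberg functors do not refer to these labels, the isomorphism intertwines the two Heisenberg actions, and since the index $\sigma$ of $\tilde{a}_\sigma$ encodes only symmetric-group branching data (not the residues that $\ast$ negates), no transpose appears. I expect this to be the main obstacle: unlike the $\sle$- and $\sll$-relations, which follow directly from the relabeling, the Heisenberg relation requires genuinely matching the two categorical Heisenberg actions under the diagram isomorphism and verifying that the strip-orientation bookkeeping cancels, so that precisely one transpose survives in $\mathsf{d}$ rather than zero or two.

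Combining the two relations then gives
$$\mathsf{d}\,\tilde{a}_\sigma=\ast\,\mathsf{r}\,\tilde{a}_\sigma=\ast\,\tilde{a}_{\sigma^\mathrm{tr}}\,\mathsf{r}=\tilde{a}_{\sigma^\mathrm{tr}}\,\ast\,\mathsf{r}=\tilde{a}_{\sigma^\mathrm{tr}}\,\mathsf{d},$$
which is the assertion. As an independent consistency check one can restrict to the doubly highest weight vertices $\tilde{a}_\sigma|\bemp,\br\rangle$, those that are highest weight for both the $\sle$- and $\sll$-crystals: by Corollary~\ref{twistedchapeauchapelle} the map $\mathsf{d}$ commutes with both crystals and therefore permutes these vertices, and by Theorem~\ref{3crystals} the induced map on each branching-graph component is an automorphism; the computation above identifies that automorphism with transposition. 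Note, however, that this check alone does not pin down the transpose, which is precisely why the wall-crossing input of the second paragraph is needed.
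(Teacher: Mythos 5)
Your strategy -- factor $\mathsf{d}=\ast\circ\mathsf{r}$ and prove the Heisenberg intertwining separately for each factor, with the transpose carried by $\mathsf{r}$ and none by $\ast$ -- is genuinely different from the paper's, and the way you distribute the transpose is consistent with the level-one picture. However, both of your key claims are asserted rather than proved, and the justifications offered do not close them. For the factor $\mathsf{r}$: the argument of Lemma \ref{lem_sleRingel}(2) does not extend to the $\slinf$-crystal in the way you claim. The $\sle$-crystal operators are defined directly from the parabolic restriction functors, so the single input needed there is that (inverse) Ringel duality intertwines those functors (\cite[Lemma 4.7]{Losev2017}); that is why the paper can quote Losev's argument for \cite[Proposition 5.6 (1)]{Losev2015} verbatim, and why it conspicuously does \emph{not} do so for part (2). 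The Heisenberg operators $\ta_\sigma$, by contrast, are built from parabolic induction against specific minimally supported (cuspidal) simples of type-$A$ subalgebras. Intertwining induction therefore only tells you that $\mathsf{r}$ matches the Heisenberg functor attached to a cuspidal $L(\sigma)$ with the Heisenberg functor attached to its image under type-$A$ Ringel duality -- so you still must decide whether that image carries the label $\sigma$ or $\sigma^{\mathrm{tr}}$. That identification is precisely the content of the lemma (in level one), so your first paragraph is circular: what it actually yields is only the dichotomy ``$\mathsf{d}\ta_\sigma=\ta_\sigma\mathsf{d}$ or $\mathsf{d}\ta_\sigma=\ta_{\sigma^{\mathrm{tr}}}\mathsf{d}$'', which the paper also obtains (from perversity) and which, as you concede at the end, does not pin down the transpose. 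Your ``equivalently'' route has the same problem in another guise: Ringel duality is the wall-crossing to the \emph{opposite} chamber, which for $\ell\geq 2$ crosses one wall of type (a) and several of type (b) simultaneously, and neither the paper nor its references establish that the combinatorial bijection of this multi-wall crossing is the composition of the single-wall bijections to which Proposition \ref{wccomm1}(2) applies.

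For the factor $\ast$ you explicitly leave the claim $\ast\circ\ta_\sigma=\ta_\sigma\circ\ast$ unproved, and ``the Heisenberg functors do not refer to these labels'' is not an argument: the source and target of $\ast$ carry Heisenberg crystals in opposite conventions (opposite signs of $\kappa$, horizontal versus vertical $e$-strips), so a transpose could perfectly well hide in this factor instead; ruling that out requires the same kind of identification as above. The paper resolves the difficulty globally rather than factor by factor: perversity of $\mathsf{D}$ gives the dichotomy; commutation with the Kashiwara operators (Corollary \ref{twistedchapeauchapelle}) together with Theorem \ref{3crystals}(2) reduces the check to vertices $|\bemp,\bs\rangle$, then to asymptotic charge via type-(b) combinatorial wall-crossings, then to $\ell=1$; and in level one the transpose is finally pinned down by a rigidity argument -- \cite[Proposition 4.17]{ChuangRouquier2017} applied to the two perverse equivalences $\mathsf{D}$ and $\ast\circ\WC$ with equal perversity functions, plus $K_0$-computations on standard objects -- giving $\mathsf{d}=M_e$. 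Some computation of this kind, identifying the duality on cuspidal simples, is unavoidable, and your proposal never supplies it for either factor.
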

\begin{proof} Since $\mathsf{D}$ is perverse with respect to supports of simple modules, i.e. cuspidal depth, $\mathsf{d}$ is an involutive isomorphism of $\slinf$-crystals $\cF_{e,\bs}\rightarrow\cF_{e,-\bs_\rev}$. Thus either $\mathsf{d}\circ\tilde{a}_\sigma=\tilde{a}_\sigma\circ\mathsf{d}$ or $\mathsf{d}\circ\tilde{a}_\sigma=\tilde{a}_{\sigma^\mathrm{tr}}\circ\mathsf{d}$.
By Corollary \ref{twistedchapeauchapelle}, $\mathsf{d}$ commutes with the two kinds of Kashiwara crystal operators up to a change of sign in the indices.
Therefore, by Theorem \ref{3crystals} (2), it is enough to prove that the two maps agree on multipartitions of the form $|\bemp,\bs\rangle$.
Further, by \cite{JaconLecouvey2018}, $|\bemp,\bs\rangle$ can be obtained by applying a sequence of combinatorial wall-crossings
of type (b) to $|\bemp,\bs'\rangle$ for some asymptotic parameter $(\ka,\bs')$.
By \cite[Proposition 5.6 (1),(2)]{Losev2015}, the combinatorial wall-crossings of type (b) commute with the three kinds of crystal operators.
Therefore, we can reduce the proof to the case where $(\ka,\bs)$ is asymptotic. 
Now, we have already seen that in this case, $\tilde{a}_\si$ acts only on one component of the empty multipartition,
so we can reduce the proof to the case $\ell=1$.

 For the rest of the proof, consider the functors $\mathsf{D}$ and $\ast\circ\WC$ when $\ell=1$ so that we are considering functors on the category $\cO_\kappa(\mathfrak{S}_n)$ (as $G(1,1,n)=\mathfrak{S}_n$). 
By \cite[Corollary 5.13]{Rouquier2008}, if $r>0$ is coprime to $e$ then $\cO_{-r/e}(\mathfrak{S}_n)\simeq \cO_{-1/e}(\mathfrak{S}_n)$ are equivalent as highest weight categories. Moreover, the equivalence sends $\Delta(\lambda)$ to $\Delta(\lambda)$, as follows from \cite[Corollary 6.10]{GGOR2003} together with the fact that the Hecke algebra at a primitive $e$'th root of unity depends only on $e$, not on the root of unity \cite{BrundanKleshchev}. 
Thus, though abusing notation, the composition $\ast\circ\WC$ is well-defined.

We claim that for $\ell=1$, the combinatorial Ringel duality $\mathsf{d}$ coincides with the transpose of the combinatorial wall-crossing. If $\kappa>0$ and $\kappa'<0$ and $\WC:D^b(\cO_\kappa(\mathfrak{S}_n))\stackrel{\sim}{\rightarrow} D^b(\cO_{\kappa'}(\mathfrak{S}_n))$ is the wall-crossing functor across the (unique) essential wall defined by the equation $\kappa=0$, then the combinatorial wall-crossing $\wc$ is given by $\wc(\lambda)=(M_e(\lambda))^{\mathrm{tr}}$ for all $\lambda\in\Pi(n)$ \cite[Corollary 5.7]{Losev2015}.
The functors $\mathsf{R}$ and $\WC$ are both perverse equivalences from $\cO_\kappa(\mathfrak{S}_n)$ with the same perversity function, namely the cuspidal depth of $L(\lambda)$. Thus $\mathsf{D}=\ast\circ\mathsf{R}$ and $\ast\circ\WC$ are perverse self-equivalences of $\cO_\kappa(\mathfrak{S}_n)$ with the same perversity function \cite[Lemma 2.5, Theorem 6.1]{Losev2017}. By \cite[Proposition 4.17]{ChuangRouquier2017}, if $\mathsf{F}:D^b(\cC)\stackrel{\sim}{\rightarrow}D^b(\cD)$ and $\mathsf{G}: D^b(\cC)\stackrel{\sim}{\rightarrow}D^b(\cE)$ are two perverse equivalences with the same perversity function, then $\mathsf{G}\circ\mathsf{F}^{-1}:\cD\stackrel{\sim}{\rightarrow} \cE$ is an equivalence of abelian categories. In particular, this implies that $\mathsf{R}(\cO_\kappa(\mathfrak{S}_n))\simeq \WC(\cO_\kappa(\mathfrak{S}_n))$.  Set $\mathsf{A}:=\left(\ast\circ\WC \right)\circ\mathsf{D}^{-1}:\cO_{\kappa}(\mathfrak{S}_n)\stackrel{\sim}{\rightarrow}\cO_{\kappa}(\mathfrak{S}_n)$. On $K_0$, it holds that $[\mathsf{D}(\Delta(\lambda))]=[\left(\ast\circ\mathsf{R}\right)(\Delta(\lambda))]=[\left(\ast\circ\WC\right)(\Delta(\lambda))]=[\Delta(\lambda^{\mathrm{tr}})]$ for any $\lambda\in\Pi(n)$ by \cite[Proposition 3.3, Corollary 4.8, Proposition 4.10]{GGOR2003} and \cite[Proposition 3.2]{Losev2015}. Thus $[\mathsf{A}(\Delta(\lambda))]=[\Delta(\lambda)]$ for all $\lambda\in\Pi(n)$. Moreover, it follows from the definitions of $\mathsf{R}$ and $\WC$ that $\mathsf{A}$ sends standardly filtered objects to standardly filtered objects. Therefore $\mathsf{A}(\Delta(\lambda))=\Delta(\lambda)$, from which it follows that $\mathsf{A}(L(\lambda))=L(\lambda)$. We conclude that $\mathsf{d}=(-)^{\mathrm{tr}}\circ\wc=M_e$. This concludes the proof.
\end{proof}

Now recall the generalized Mullineux involution $\Phi:\cF_{e,\bs}\rightarrow\cF_{e,-\bs_\rev}$ from Theorem \ref{genmul}. 
\begin{theorem}\label{ringelmul}
We have $\mathsf{d}=\Phi.$
\end{theorem}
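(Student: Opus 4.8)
The plan is to check that the combinatorial Ringel duality $\mathsf{d}$ satisfies the four defining properties of $\Phi$ listed in Theorem \ref{genmul}(1), and then to invoke the uniqueness asserted there. First I would set up the comparison on the right domain: as $\bs$ ranges over $\Z^\ell(s)$, the charges $-\bs_\rev$ exhaust $\Z^\ell(-s)$, so the maps $\mathsf{d}\colon\cF_{e,\bs}\to\cF_{e,-\bs_\rev}$ assemble into a single bijection $\cF_{e,s}\to\cF_{e,-s}$ which, exactly like $\Phi$, carries $\Pi^\ell_\bs$ onto $\Pi^\ell_{-\bs_\rev}$. This full-space viewpoint is genuinely needed, since the operators $\tdf_j$ and $\ta_\sigma$ may move between different $\ell$-charges, whereas the uniqueness in Theorem \ref{genmul}(1) is phrased for a bijection of all of $\Pi^\ell_s$.

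Next I would record the intertwining relations. Properties (a) and (c) of Theorem \ref{genmul}(1), namely $\mathsf{d}\circ\tf_i=\tf_{-i}\circ\mathsf{d}$ and $\mathsf{d}\circ\tdf_j=\tdf_{-j}\circ\mathsf{d}$, are precisely the content of Corollary \ref{twistedchapeauchapelle}, while property (b), namely $\mathsf{d}\circ\ta_\sigma=\ta_{\sigma^{\mathrm{tr}}}\circ\mathsf{d}$, is exactly Lemma \ref{twistedslinf}. So three of the four conditions come for free from the work already done.

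It then remains to verify the normalization (d), that $\mathsf{d}(|\bemp,\bs\rangle)=|\bemp,-\bs_\rev\rangle$, which is the trivial base case. Since $\mathsf{D}=\ast\circ\mathsf{R}$ is built from functors defined separately on each $\cO_{\ka,\bs}(n)$, it preserves the grading by $n$, and hence $\mathsf{d}$ preserves the size of multipartitions. In size $0$ the only multipartition is $\bemp$, so $\mathsf{d}(|\bemp,\bs\rangle)$ can only be the size-$0$ basis vector $|\bemp,-\bs_\rev\rangle$ of the target; equivalently, $\cO_{\ka,\bs}(0)\simeq\mathrm{Vect}_\C$ has a unique simple object, which any equivalence must fix.

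Having checked properties (a)--(d) for $\mathsf{d}$, I would conclude $\mathsf{d}=\Phi$ by the uniqueness in Theorem \ref{genmul}(1). I do not expect the final step itself to present any obstacle: all the substance is already packaged in the preceding lemmas---above all Lemma \ref{twistedslinf}, whose proof reduces to level $1$ and compares $\ast\circ\mathsf{R}$ with $\ast\circ\WC$ as perverse self-equivalences sharing a perversity function, and Lemma \ref{lem_sllRingel}, which uses standard Koszulity to commute Ringel duality with the level-rank duality defining the $\slell$-crystal. The present argument merely collects these inputs together with the elementary size-$0$ normalization and the characterization of $\Phi$.
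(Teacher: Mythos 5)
Your proof is correct and follows essentially the same route as the paper: it verifies the hypotheses of the uniqueness statement in Theorem \ref{genmul}(1) via Corollary \ref{twistedchapeauchapelle} and Lemma \ref{twistedslinf}, then concludes that $\mathsf{d}=\Phi$. The only (harmless) difference is how you get the normalization $\mathsf{d}(|\bemp,\bs\rangle)=|\bemp,-\bs_\rev\rangle$: the paper extracts it from the proof of Lemma \ref{twistedslinf}, whereas your degree-zero argument --- $\mathsf{D}$ preserves $n$ and $\cO_{\ka,\bs}(0)$ has a unique simple object --- is a valid and slightly more self-contained justification of the same fact.
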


\begin{proof}
Corollary \ref{twistedchapeauchapelle} and Lemma \ref{twistedslinf} combined imply that $\mathsf{d}$ and $\Phi$ satisfy the same commutation relations with the operators for the $\slinf$-, $\sle$-, and $\slell$-crystals.
Moreover, as seen in the proof of \Cref{twistedslinf}, $\mathsf{d}$ maps $|\bemp,\bs\rangle$ to $|\bemp,-\bs_\rev\rangle$.
By Theorem \ref{genmul} (1), $\Phi$ is the unique involution with these properties, so $\mathsf{d}=\Phi$.
\end{proof}

We recall for the reader the fact that the finite-dimensional modules of a rational Cherednik algebra $H_c(W)$ coincide with the cuspidal modules, i.e. those sent to $0$ by every parabolic restriction functor with respect to a proper parabolic subgroup of $W$  \cite{BezrukavnikovEtingof}. 
\begin{corollary}\label{level2}
Suppose $L(\bla)$ is a finite-dimensional irreducible representation of the rational Cherednik algebra of type $B_n=G(2,1,n)$ 
for parameters corresponding to Fock space charge $\bs=(s_1,s_2)\in\Z^2$. 
Then $\Phi(\bla)=\bla$, and $\mathsf{D}\left(L(\bla)\right)=L(\bla),$ where we identify $L(\bla)$ with the complex concentrated in degree $0$.
\end{corollary}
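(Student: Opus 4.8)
The plan is to reduce, via the crystal characterization of finite-dimensional simples, to a statement about a single crystal component of $\bla$, and then to exploit the fact that for $\widehat{\mathfrak{sl}_2}$ the sign change $j\mapsto -j$ on residues is trivial. First I would invoke the Shan--Vasserot criterion \Cref{crystalcusp}: $L(\bla)$ is finite-dimensional precisely when $\bla$ is a highest weight vertex for both the $\sle$- and the $\slinf$-crystals on $\cF_{e,\bs}$. In the triple decomposition of \Cref{bijtriple}, this forces the $\sle$-component and the Heisenberg component to be trivial, i.e. writing $\beta(|\bla,\bs\rangle)=(\bnu,\sigma,\bpi)$ we have $\bnu=\bemp$ and $\sigma=\emptyset$, so that $|\bla,\bs\rangle=\tdf_{j_r}\cdots\tdf_{j_1}|\bemp,\br\rangle$ for some $\br\in A(s)$, with $\bpi=\mathsf{k}|\bla,\bs\rangle\in\ug_{\ell,\dbr}$. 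By \Cref{genmul}(2), together with ${\tt\Phi}_{e,\br}(\bemp)=\bemp$ and $\emptyset^{\mathrm{tr}}=\emptyset$, the map $\Phi$ then modifies $|\bla,\bs\rangle$ only through the dual Mullineux map on the $\sll$-component: $\Phi(|\bla,\bs\rangle)=\beta^{-1}\big(\bemp,\emptyset,{\tt\Phi}_{\ell,\dbr}(\bpi)\big)$.

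Next I would use that $\ell=2$. For the $\widehat{\mathfrak{sl}_2}$-crystal the residues lie in $\Z/2\Z$, where $-j\equiv j$, so ${\tt\Phi}_{\ell,\dbr}$ intertwines $\tdf_j$ with $\tdf_{-j}=\tdf_j$ and fixes the empty $e$-partition; by the uniqueness in \Cref{FJLmul} it is the canonical $\widehat{\mathfrak{sl}_2}$-crystal isomorphism $\ug_{\ell,\dbr}\to\ug_{\ell,-\dbr_\rev}$. I would then identify this isomorphism explicitly. The componentwise transpose-and-reversal $(\cdot)^{\mathrm{tr}}_\rev$ of $e$-partitions sends the charge $\dbr$ to $-\dbr_\rev$, fixes $\dot{\bemp}$, and commutes with the crystal operators, since this transpose symmetry exchanges $\tdf_j$ with $\tdf_{-j}$ (as in the $\sle$-analogue cited from \cite[Section 3.2.3]{JaconLecouvey2010}), which is harmless when $-j\equiv j$. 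By the same uniqueness, ${\tt\Phi}_{\ell,\dbr}=(\cdot)^{\mathrm{tr}}_\rev$.

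The heart of the argument, and the step I expect to be the main obstacle, is to show that this transpose-reversal on the $\sll$-component is invisible on the underlying $\ell$-partition after reconstruction. Concretely, one must check that reconstructing $(\bemp,\emptyset,{\tt\Phi}_{\ell,\dbr}(\bpi))$ at the charge $-\bs_\rev$ returns the same $\ell$-partition as reconstructing $(\bemp,\emptyset,\bpi)$ at $\bs$; equivalently, that $|\bpi^{\mathrm{tr}}_\rev,-\dbr_\rev\rangle$ and $|\bpi,\dbr\rangle$ have level-rank duals with the same underlying $\ell$-partition. Here I would invoke \Cref{Ugyv}(2): the level-rank duality $\mathsf{k}$ used in this paper differs from Uglov and Yvonne's by exactly the twist $|\bla,\bs\rangle\mapsto|\bla^{\mathrm{tr}}_\rev,-\bs_\rev\rangle$. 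Thus the transpose-reversal produced by ${\tt\Phi}_{\ell,\dbr}$ cancels precisely the transpose-reversal built into the duality, and the two reconstructions return the same $\ell$-partition, giving $\Phi(\bla)=\bla$. The delicate point is the bookkeeping of charges, for which I would use \Cref{yv} and \Cref{combij} to track the relevant $\sle$- and $\sll$-weight spaces through both reconstructions.

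Finally, for the representation-theoretic statement I would pass through \Cref{ringelmul}, which gives $\mathsf{d}=\Phi$, so $\mathsf{d}(\bla)=\bla$ by the above. Since $\mathsf{D}=\ast\circ\mathsf{R}$ is a perverse equivalence with respect to the filtration of simples by the dimension of their support (the abelian equivalence $\ast$ not affecting perversity), and $L(\bla)$, being finite-dimensional, is cuspidal \cite{BezrukavnikovEtingof} and hence lies in the bottom layer $S_0$ of this filtration where the perversity function vanishes, the complex $\mathsf{D}(L(\bla))$ is concentrated in cohomological degree $0$, and its unique composition factor there is $L(\mathsf{d}(\bla))=L(\bla)$. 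Therefore $\mathsf{D}(L(\bla))=L(\bla)$, identified with the complex concentrated in degree $0$.
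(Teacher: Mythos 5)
Your reduction via \Cref{crystalcusp} and \Cref{bijtriple}, and your final passage from $\Phi(\bla)=\bla$ to $\mathsf{D}(L(\bla))=L(\bla)$, are both sound and close to the paper. The gap is in your second and third paragraphs: the identification ${\tt\Phi}_{\ell,\dbr}=(\cdot)^{\mathrm{tr}}_\rev$ is false. The intertwining property you invoke (transpose-reversal exchanges $\tdf_j$ and $\tdf_{-j}$) only holds between the crystal structures attached to \emph{opposite} signs of $\kappa$ --- that is the force of the qualifier ``when changing $\kappa$ to $-\kappa$'' in the paper's appeal to \cite[Section 3.2.3]{JaconLecouvey2010} --- and not within a single fixed Fock space convention, where transpose-reversal is not a crystal morphism and does not even preserve Uglov sets. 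Concretely, take $e=\ell=2$ and $\dbr=(0,0)\in\dot{A}(0)$, so that $-\dbr_\rev=\dbr$ and the uniqueness in \Cref{FJLmul} forces ${\tt\Phi}_{\ell,\dbr}=\id$. The bipartition $((2),\emp)$ is FLOTW, hence lies in $\ug_{2,(0,0)}$ by \cite[Theorem 2.10]{FLOTW1999}, but its transpose-reversal $(\emp,(1,1))$ violates the FLOTW condition $\lambda^{(1)}_i\geq\lambda^{(2)}_i$ and so is not in $\ug_{2,(0,0)}$ at all; in particular ${\tt\Phi}_{\ell,\dbr}((2),\emp)=((2),\emp)\neq(\emp,(1,1))$. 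With this step gone, the ``cancellation'' against the twist of \Cref{Ugyv}(2) in your third paragraph has nothing to cancel, and the combinatorial claim $\Phi(\bla)=\bla$ is not established.

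The fix (and the paper's actual argument) avoids computing ${\tt\Phi}_{\ell,\dbr}$ altogether: for $\ell=2$ one has $-\bs_\rev=(-s_2,-s_1)=\bs-(s_1+s_2)(1,1)$, an integer shift of $\bs$, and likewise $-\br_\rev=\br-s(1,1)$ since $\br\in A(s)$. Since $-j\equiv j\bmod 2$, Theorem \ref{genmul}(1)(c)--(d) gives $\Phi(|\bla,\bs\rangle)=\tdf_{j_r}\cdots\tdf_{j_1}|\bemp,-\br_\rev\rangle$, and because the action of $\tdf_j$ on an $\ell$-partition is unchanged by an integer shift of the charge, this equals $|\bla,-\bs_\rev\rangle$; hence $\Phi(\bla)=\bla$ with no need to identify the dual Mullineux map explicitly. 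The same shift observation is also what your last paragraph silently needs: $\mathsf{D}$ maps $D^b(\cO_{\kappa,\bs})$ to $D^b(\cO_{\kappa,-\bs_\rev})$, so the equality $\mathsf{D}(L(\bla))=L(\bla)$ only makes sense after identifying these two categories, which is legitimate precisely because charges differing by an integer shift define the same rational Cherednik algebra.
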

\begin{proof}
First, we make some remarks which hold for arbitrary $\ell$. By Theorem \ref{crystalcusp}, $L(\bla)$ is finite-dimensional if and only if $|\bla,\bs\rangle$ has depth $0$ in both the $\slinf$- and $\sle$-crystals. Recall that $\mathsf{D}$ is perverse with respect to filtration by the depth in the $\sle$- and $\slinf$-crystals, i.e. cuspidal depth. 
Since $\mathsf{D}$ is not just a derived equivalence but a perverse equivalence, by Definition \ref{def:perverse} it holds that $\mathsf{D}$ induces an abelian equivalence on each associated graded layer of $\cO_{\kappa,\bs}(n)$ with respect to the filtration by supports of simple modules. In particular, $\mathsf{D}$ restricts to an equivalence of abelian categories on the bottom filtration layer of $\cO_{\kappa,\bs}(n)$, and this subcategory coincides with the subcategory of cuspidal, that is finite-dimensional, modules. Thus if $\bla$ has depth $0$ in both the $\sle$- and $\slinf$-crystals, $\mathsf{D}$ sends $L(\bla)$ to some $L(\bmu)$ where $\bmu$ again has depth $0$ in both the $\sle$- and $\slinf$-crystals. Here,   we identify $L(\bla)$ with a complex concentrated in degree $0$, and likewise for $L(\bmu)$.   In this case, $\mathsf{D}(L(\bla))=L(\Phi(\bla))$ by Theorem \ref{ringelmul}.

Now we specify to the case $\ell=2$. When $\ell=2$, the map $\Phi$ fixes the $\slell$-crystal operators $\tdf_j$ since $j=-j\mod 2$. 
Moreover, $-\bs_\rev=(-s_2,-s_1) = (s_1,s_2) - (s_2+s_1,s_2+s_1)$, i.e. $-\bs_\rev$ is just an integer shift of $\bs$. Recall the remark at the beginning of \S\ref{reps of cyclo RCA} that charges differing by an integer shift define the same rational Cherednik algebra. Thus we may identify the Cherednik algebras defined by $(\kappa,\bs)$ and $(\kappa,-\bs_\rev)$, and hence we may identify their categories $\cO$.
Now, it is straightforward to see that the action of $\tdf_j$ on an element of $\Pi^\ell_\bs$ is not affected by an integer shift of the charge,
more precisely
$$\tdf_j|\bnu,\bt\rangle =: |\bnu',\bt'\rangle \,\, \Ra \,\,  \tdf_j|\bnu,\bt+k\rangle = |\bnu',\bt'+k\rangle$$
for all $\bnu\in\Pi^\ell$, $\bt\in\Z^\ell(s)$ and $k\in\Z$.
In particular, we get $\Phi(|\bla,\bs\rangle)=|\bla,-\bs_\rev\rangle$,
which proves the claim.
\end{proof}
\begin{remark}
Corollary \ref{level2} implies that Ringel duality can be seen as a self-equivalence of $\cO_{e,\bs}(B_n)$ which fixes cuspidal (i.e. finite-dimensional) modules,
and we recover \cite[Remark 4.12]{GGOR2003}.
\end{remark}

\begin{example}
Take $e=3$, $\bs=(-1,3)$ and $\bla=(3.3,\emptyset)$.
Then 
\begin{align*}
\beta (|\bla,\bs\rangle) & = 
(  \,\, |(\emp,\emp),(0,2)\rangle\,\, , \,\, \emptyset \,\, , \,\, |(2,2,1),(-1,-1,0)\rangle \,\, ) \\
& = ( \,\, |(\emp,\emp),(0,2)\rangle\,\, , \,\, \emptyset \,\, , \,\, \tdf_0\tdf_0\tdf_0\tdf_1\tdf_1 \, |(\emp,\emp,\emp),(-1,-1,0)\rangle \,\, ),
\end{align*}
so that $L(\bla)$ is finite-dimensional in $\cO_{1/e,\bs}(6)$.
We have 
\begin{align*}
\Phi(|\bla,\bs\rangle) & = \beta^{-1} (  \,\, |(\emp,\emp),(-2,0)\rangle\,\, , \,\, \emptyset \,\, , \,\, \tdf_0\tdf_0\tdf_0\tdf_1\tdf_1 \, |(\emp,\emp,\emp),(0,1,1)\rangle \,\, ) \\
& = \beta^{-1} (  \,\, |(\emp,\emp),(-2,0)\rangle\,\, , \,\, \emptyset \,\, , \,\, |(1,2,2),(0,1,1)\rangle \,\, ) \\
& = |(3.3,\emp),(-3,1)\rangle \\
& = |\bla,-\bs_\rev\rangle.
\end{align*}
\end{example}

\section{Other crystal structures and generalization of Mullineux involutions}\label{perspectives}

We now explain a possible generalization of the Mullineux involution defined in \cite{DudasJacon2018} that uses the results of Section \ref{section_genmul}.  
We already know that  we have three different crystal structures on level $\ell$ Fock spaces which are pairwise commuting,
namely the $\sle$-, $\slinf$- and $\sll$-crystal.
We now slightly generalize this by introducing an additional parameter $d\in \Z_{\geq0}$ 
(which plays the role of the parameter $\ell$ in \cite{DudasJacon2018} in level one - that is type $A$ situation).
 To do this, recall that if $\lambda\in \Pi$, we can uniquely  write the decomposition:
 $$\lambda=\lambda_{(0)}+\lambda_{(1)}^d+\ldots +\lambda_{(n)}^{d^n}$$ 
 for $n\in \Z_{\geq0}$ and where each $\lambda_{(i)}$ is $d$-regular.
 
 Let $j\in \mathbb{Z}_{>0}$.  One can define an action of a Kashiwara operator $\tf_{k,j}$ (for $k=0,\ldots,d-1$) as follows:
 $$\tf_{k,j}.\lambda=\mu \quad \text{ for all } \la\in\Pi$$
 where $\mu_{(t)}=\lambda_{(t)}$ when $t\neq d$ and $\mu_{(d)}=\tf_k \lambda_{(d)}$  (where $\tf_k$ 
 is denoting the usual Kashiwara operator acting on $d$-regular partitions). 
 
 Using the decomposition in Theorem \ref{3crystals}, we get an action of Kashiwara operators on the whole Fock space as follows. 
  Let $|\bla,\bs\rangle \in \Pi^\ell_s$ and write
  $$\beta (|\bla, \bs\rangle )=(|\bla_1,\br\rangle ,\sigma,|\bla_2,\dbr \rangle ).$$ 
  Then, for all $j\in \mathbb{Z}_{>0}$ :
  $$\tf_{k,j}. |\bla, \bs\rangle =\beta^{-1}(|\bla_1,\br\rangle,\tf_{k,j}.\sigma,|\bla_2,\dbr\rangle)$$
  For each $j\in \Z_{\geq0}$, we thus get an $\widehat{\mathfrak{sl}_d}$-crystal.
  
It is immediate to see that these actions  also commute with the 
$\sle$-crystal and the  $\sll$-crystal (it just follows from the existence of the bijection $\beta$). Finally, there is an obvious analogue of the Mullineux involution for this decomposition, which depends on $d$.   
Namely, for $|\bla,\bs\rangle \in \Pi^\ell_s$ and $\beta(|\bla,\bs\rangle)=(|\bla_1,\br\rangle ,\sigma,|\bla_2,\br \rangle )$, we define:
$$\Phi^{(d)} (|\bla,\bs\rangle  )= \beta^{-1} (m_{e,\br}(\bla_1) ,
m_e(\sigma_{(0)})+m_e(\sigma_{(1)})^d+\ldots +m_e(\sigma_{(n)})^{d^n}
,m_{\ell,\dbr}(\bla_2) ),$$
so that $\Phi^{(d)}$ generalizes simultaneously $\Phi$ and the
version of the Mullineux involution of \cite{DudasJacon2018} (which we recover by taking $\ell=1$).
As in \cite{DudasJacon2018}, 
we believe it would be interesting to look at the case $\ell=2$ and investigate the relationship between
$\Phi^{(d)}$ on the one hand, and the Alvis-Curtis duality for unipotent representations of
finite unitary groups in transverse characteristic $d$ on the other hand (or more generally of finite groups of Lie type $B$ and $C$).

\bibliographystyle{plain}

\begin{thebibliography}{10}

\bibitem{ariki}
Susumu Ariki.
\newblock{On the classification of simple modules for cyclotomic Hecke algebras of type G(m,1,n) and Kleshchev multipartitions.}
\newblock{ Osaka J. Math.} 38 (2001), no. 4, 827–837. 


\bibitem{BessenrodtOlsson1998}
Christine Bessenrodt, J{\o}rn~B. Olsson.
\newblock{On residue symbols and the Mullineux conjecture.}
\newblock{ \em J. Algebraic Combin.} 7 (1998), no.3, 227--251.


\bibitem{BezrukavnikovEtingof}
Roman Bezrukavnikov and Pavel Etingof. 
\newblock {Parabolic induction and restriction functors for rational Cherednik algebras}. 
\newblock {\em Selecta Math. (N.S.)} 14 (2009), no. 3-4, 397--425. 

\bibitem{BrundanKleshchev}
Jonathan Brundan and Alexander Kleshchev.
\newblock{Blocks of cyclotomic Hecke algebras and Khovanov-Lauda algebras.}
\newblock{\em Invent. Math.} 178 (2009), no. 3, 451--484. 


\bibitem{ChuangRouquier2008}
Joseph Chuang and Rapha\"el Rouquier.
\newblock {Derived equivalences for symmetric groups and
  $\mathfrak{s}\mathfrak{l}_2$-categorification}.
\newblock {\em Ann. Math.}, 167:245--298, 2008.

\bibitem{ChuangRouquier2017}
Joseph Chuang and Rapha\"el Rouquier.
\newblock Perverse equivalences.
\newblock Preprint,\\ {\tt http://www.math.ucla.edu/$\sim$rouquier/papers/perverse.pdf}. 

\bibitem{Donkin}
Stephen Donkin.
\newblock The $q$-Schur algebra.
\newblock {\em London Mathematical Society Lecture Note Series}, 253. Cambridge University Press, Cambridge, 1998. 

\bibitem{DudasJacon2018}
Olivier Dudas and Nicolas Jacon.
\newblock {Alvis-Curtis duality for finite general linear groups and a
  generalized Mullineux involution}.
\newblock {\em SIGMA}, 14, 2018.
\newblock arXiv:1706.04743.



\bibitem{EtingofGinzburg2002}
Pavel Etingof and Victor Ginzburg.
\newblock{Symplectic reflection algebras, Calogero-Moser space, and deformed Harish-Chandra homomorphism.}
\newblock {\em Invent. Math.} 147 (2002), no. 2, 243--348. 


\bibitem{fayers}
Matthew  Fayers
\newblock{Weights of multipartitions and representations of Ariki-
Koike algebras
II : canonical bases}.
\newblock {\em Journal of Algebra}
, 319: 2963–2978 (2008).

\bibitem{FLOTW1999}
Omar Foda, Bernard Leclerc, Masato Okado, Jean-Yves Thibon, and Trevor Welsh.
\newblock {Branching functions of $A_{n-1}^{(1)}$ and Jantzen-Seitz problem for
  Ariki-Koike algebras}.
\newblock {\em Adv. Math.}, 141:322--365, 1999.



\bibitem{FK}
Ben Ford and Alexander Kleshchev. 
\newblock {A proof of the Mullineux conjecture}.
\newblock {\em Math. Z. }, 226:267--308, 1997.




\bibitem{GeckJacon2011}
Meinolf Geck and Nicolas Jacon.
\newblock {\em {Representations of Hecke Algebras at Roots of Unity}}.
\newblock Springer, 2011.

\bibitem{Gerber2016a}
Thomas Gerber.
\newblock {Heisenberg algebra, wedges and crystals}.
\newblock {\em J. Alg. Comb.}, 2018.
\newblock https://doi.org/10.1007/s10801-018-0820-8.

\bibitem{Gerber2016}
Thomas Gerber.
\newblock {Triple crystal action in Fock spaces}.
\newblock {\em Adv. Math.}, 329:916--954, 2018.

\bibitem{GGOR2003}
Victor Ginzburg, Nicolas Guay, Eric Opdam, and Raphael Rouquier.
\newblock {On the category $\cO$ for rational Cherednik algebras}.
\newblock {\em Invent. Math.}, 154:617--651, 2007.

\bibitem{GordonLosev}
Iain Gordon and Ivan Losev.
\newblock On category $\cO$ for cyclotomic rational Cherednik algebras.
\newblock  {\em J. Eur. Math. Soc. (JEMS)} 16 (2014), no. 5, 1017--1079. 

\bibitem{HongKang2002}
Jin Hong and Seok-Jin Kang.
\newblock {\em {Introduction to Quantum Groups and Crystal Bases}}.
\newblock American Mathematical Society, 2002.

\bibitem{Jkle}
Nicolas Jacon.
\newblock {Kleshchev multipartitions and extended Young diagrams}.
\newblock {\em Adv. Math.}, 367-403, 2018.

\bibitem{JaconLecouvey2008}
Nicolas Jacon and C\'edric Lecouvey.
\newblock {On the Mullineux involution for Ariki-Koike algebras}.
\newblock {\em J. Alg.}, 321:2156--2170, 2008.

\bibitem{JaconLecouvey2010}
Nicolas Jacon and C\'edric Lecouvey.
\newblock {Crystal isomorphisms for irreducible highest weight $U_v (\widehat{\mathfrak{sl}}_e)$
-modules of higher level }.
\newblock {\em Algebras and Representation Theory}, Springer Verlag, 2010, 13, pp.467-489. 

\bibitem{JaconLecouvey2018}
Nicolas Jacon and C\'edric Lecouvey.
\newblock {Crystal isomorphisms and wall-crossing maps for rational Cherednik
  algebras}.
\newblock {\em Transformation Groups}, 23(1):101--117, 2018.

\bibitem{JMMO1991}
Michio Jimbo, Kailash~C. Misra, Tetsuji Miwa, and Masato Okado.
\newblock Combinatorics of representations of {$U_q(\widehat{sl(n)})$} at
  {$q=0$}.
\newblock {\em Comm. Math. Phys.}, 136(3):543--566, 1991.



\bibitem{Kashiwara1991}
Masaki Kashiwara.
\newblock {On crystal bases of the $q$-analogue of universal enveloping
  algebras}.
\newblock {\em Duke Math. J.}, 63:465--516, 1991.

\bibitem{Kleshchev1995II}
 Alexander Kleshchev.
\newblock {Branching rules for modular representations of symmetric groups. II.}
\newblock{ J. Reine Angew. Math.} 459:163–212, 1995.


\bibitem{Kleshchev1995III}
Alexander Kleshchev.
\newblock {Branching rules for modular representations of symmetric groups,
  III: some corollaries and a problem of Mullineux}.
\newblock {\em J. London Math. Soc.}, 54:25--38, 1995.

\bibitem{KuwabaraMiyachiWada}
Toshiro Kuwabara, Hyohe Miyachi, and Kentaro Wada.
\newblock {On the Mackey formulas for cyclotomic Hecke algebras and categories O of rational Cherednik algebras}.
\newblock arXiv:1801.03761.


\bibitem{LLT1996}
Alain Lascoux, Bernard Leclerc, and Jean-Yves Thibon.
\newblock {Hecke algebras at roots of unity and crystal bases of quantum affine
  algebras}.
\newblock {\em Comm. Math. Phys.}, 181:205--263, 1996.

\bibitem{Losev2017}
Ivan Losev.
\newblock {Derived equivalences for rational Cherednik algebras}.
\newblock {\em Duke Math. J.}, 166:27--73, 2017.

\bibitem{Losev2015}
Ivan Losev.
\newblock {Supports of simple modules in cyclotomic Cherednik categories O}.
\newblock 2015.
\newblock arXiv:1509.00526.

\bibitem{Losev2015a}
Ivan Losev.
\newblock {Rational Cherednik algebras and categorification}.
\newblock {\em Contemp. Math.}, 683:1--41, 2017.

\bibitem{LosevSA}
 Ivan Losev and Seth Shelley-Abrahamson.
\newblock  On refined filtration by supports for rational Cherednik categories $\mathcal{O}$. 
\newblock {\em Selecta Math. (N.S.)} 24 (2018), no. 2, 1729--1804.

\bibitem{Mazorchuk}
 Vlodomyr  Mazorchuk.
\newblock Koszul  duality  for  stratified  algebras  I.  Balanced  quasi-hereditary  algebras.
\newblock{\em Manuscripta  Math.} 131(2010), 1--10.

\bibitem{Mullineux1979}
Glen Mullineux.
\newblock {Bijections of $p$-regular partitions and $p$-modular irreducibles of
  the symmetric groups}.
\newblock {\em J. London Math. Soc.}, 20(1):60--66, 1979.

\bibitem{Rouquier2008}
Raphael Rouquier.
\newblock {$q$-Schur algebras and complex reflection groups, I}.
\newblock {\em Mosc. Math. J.}, 8:119--158, 2008.



\bibitem{RSVV}
Rapha\"{e}l Rouquier, Peng Shan, Michela Varagnolo, and \'{E}ric Vasserot.
\newblock{Categorifications and cyclotomic rational double affine Hecke algebras}.
\newblock {\em Invent. Math.} 204 (2016), no. 3, 671--786.

\bibitem{Shan2011}
Peng Shan.
\newblock {Crystals of Fock spaces and cyclotomic rational double affine Hecke algebras}.
\newblock {\em Ann. Sci. \'Ec. Norm. Sup\'er.}, 44:147--182, 2011.

\bibitem{SVV2014}
Peng Shan, Michela Varagnolo and \'Eric Vasserot.
\newblock Koszul duality of affine Kac–Moody algebras and cyclotomic rational double affine Hecke algebras.
\newblock {\em Adv. Math.} 262 (2014), 370--435

\bibitem{ShanVasserot}
Peng Shan and \'Eric Vasserot.
\newblock Heisenberg algebras and rational double affine Hecke algebras.
\newblock {\em J. Amer. Math. Soc.} 25 (2012), no. 4, 959--1031. 

\bibitem{Uglov1999}
Denis Uglov.
\newblock {Canonical bases of higher-level $q$-deformed Fock spaces and
  Kazhdan-Lusztig polynomials}.
\newblock {\em Progr. Math.}, 191:249--299, 1999.

\bibitem{Webster2017}
Ben Webster.
\newblock Rouquier's conjecture and diagrammatic algebra.
\newblock {\em Forum of Mathematics, Sigma}, 5, 2017.
\newblock doi:10.1017/fms.2017.17.

\bibitem{Yvonne2007}
Xavier Yvonne.
\newblock {Canonical bases of higher level $q$-deformed Fock spaces}.
\newblock {\em J. Alg. Comb.}, 26:383--414, 2007.

\end{thebibliography}

\end{document}